\documentclass[11pt,leqno]{article}

\usepackage{amssymb,amsmath,amsthm}

\topmargin -.5in
\textheight 9in
\textwidth 6.5in
\oddsidemargin 0.0in
\evensidemargin 0.0in

\newcommand{\n}{\noindent}
\newcommand{\bb}[1]{\mathbb{#1}}
\newcommand{\cl}[1]{\mathcal{#1}}
\newcommand{\vp}{\varepsilon}

\newcommand{\sst}{\scriptstyle}
\newcommand{\intl}{\int\limits}
\newcommand{\ms}{\medskip}

\theoremstyle{plain}
\newtheorem{thm}{Theorem}[section]
\newtheorem{pro}[thm]{Proposition}
\newtheorem{lem}[thm]{Lemma}
\newtheorem{cor}[thm]{Corollary}
\newtheorem*{prb}{Problem}

\theoremstyle{remark}
\newtheorem*{rk}{Remark}
\newtheorem{rem}[thm]{Remark}

\theoremstyle{definition}
\newtheorem{defn}[thm]{Definition}
\newtheorem{stp}{Step}

\numberwithin{equation}{section}

\begin{document}

\title{Remarks on the non-commutative Khintchine inequalities for $0<p<2$}

\author{by\\
Gilles Pisier\footnote{Partially supported by NSF grant 0503688
and   ANR-06-BLAN-0015.}\\
Texas A\&M University\\
College Station, TX 77843, U. S. A.\\
and\\
 Universit\'e Paris 6 (UPMC)\\
Institut Math.   Jussieu (Analyse Fonctionnelle)\\ Case 186, 75252
Paris Cedex 05, France}

\date{Nov.  14, 2008}

\maketitle

\begin{abstract} 
We show that the validity of the non-commutative Khintchine inequality for some $q$ with $1<q<2$ implies its validity (with another constant) for all $1\le p<q$. We prove this
for the inequality involving the Rademacher functions, but also for more general ``lacunary'' sequences, or even non-commutative analogues of the Rademacher functions. For instance, we may apply it to the ``$Z(2)$-sequences'' previously considered by Harcharras. The result appears to be new in that case. It implies that the space $\ell^n_1$ contains (as an operator space) a large subspace uniformly isomorphic (as an operator space) to $R_k+C_k$ with $k\sim n^{\frac12}$. This naturally raises several interesting questions concerning the best possible such $k$.
Unfortunately we cannot settle the validity of the non-commutative Khintchine inequality
for $0<p<1$ but we can prove several would be corollaries. For instance,   given an infinite scalar matrix $[x_{ij}]$, we give a necessary and sufficient condition for $[\pm  x_{ij}]$ to be in the Schatten class $S_p$ for almost all (independent) choices of signs $\pm 1$.
We also   characterize the bounded Schur multipliers from $S_2$ to $S_p$.
The latter two characterizations extend to $0<p<1$ results already known for $1\le p\le2$.
In  addition, we observe that the hypercontractive inequalities, proved by Carlen and Lieb
for the Fermionic case, remain valid for operator space valued functions, and hence
the Kahane inequalities are valid in this setting.
 
 \end{abstract}

2000 MSC 46L51, 46L07, 47L25, 47L20
\vfill\eject

The non-commutative Khintchine inequalities play a very important r\^ole in the recent developments
in non-commutative Functional Analysis, and in particular in Operator Space Theory, see \cite{P2,P4}. Just like their commutative counterpart for ordinary $L_p$-spaces, they are    a central tool to understand
all  sorts of questions involving series of random
variables, or random vectors, in relation with unconditional or almost unconditional convergence
in non-commutative $L_p$ (\cite{PX}). The commutative version is  also crucial 
in the factorization theory for linear maps between $L_p$-spaces
\cite{Ma,Ma2}   in connection with Grothendieck's Theorem.
The non-commutative analogues 
of Grothendieck's Theorem reflect the same close connection with the Khintchine inequalities,
see e.g. the recent paper \cite{X}.
Moreover, in the non-commutative case, 
further motivation for their study comes from Random Matrix Theory and
Free Probability. For instance one finds that the Rademacher functions
(i.e. i.i.d. $\pm 1$-valued) independent random  variables satisfy the same inequalities
as the freely independent ones in non-commutative $L_p$ for $p<\infty$.

For reasons that hopefully will appear below, the case $p<2$ is more delicate, and 
actually the case $p<1$ is still open. When $p<2$,  let us  say for convenience  that a sequence $(f_k)$
in classical $L_p$ satisfies the classical Khintchine inequality $KI_p$ if there is a constant $c_p$
such that for all finite scalar sequences $(a_j)$ we have
$$(\sum |a_j|^2)^{1/2}\le c_p \| \sum a_j f_j \|_p.$$
Now assume that $(f_k)$ is orthonormal in $L_2$.
Then it is easy to see that if $p<q<2$,  $KI_q$ implies $KI_p$. Indeed,
let $S=\sum a_j f_j$.  Let $\theta$ be such that
$ 1/q= (1-\theta)/p+\theta/2.$  We have 
$$(\sum |a_j|^2)^{1/2}\le c_q\|S\|_q \le c_q\|S\|^{1-\theta}_p \|S\|^{\theta}_2=c_q\|S\|^{1-\theta}_p (\sum |a_j|^2)^{\theta/2}, $$
and hence after a suitable division we obtain $KI_p$
with $c_p=(c_q)^{1/(1-\theta)}$. The heart of this simple argument is
   that the span of the sequence $(f_k)$ is the same in $L_p$  and in $L_q$
   or in $L_2$.  
In sharp contrast, the analogue of this fails for operator spaces. The span of the
 Rademacher functions in $L_p$ is not isomorphic as operator space
 to its span in $L_q$, although they have  the same underlying Banach space. This is reflected
 in the form of the non-commutative version of the Khintchine inequalities
 first proved by Lust-Piquard in \cite{LP1} and labelled as $(Kh_q)$ below
 for the case of  non-commutative $L_q$. Nevertheless, it turns out that
 the above simple minded extrapolation argument can still be made to work,
 this is our main result
 but this requires a more sophisticated version of H\"older's inequality,
 that (apparently) forces us to restrict  ourselves to $p\ge 1$.
 
Let $1\le q\le 2$. Let $(r_k)$ be the Rademacher functions on $\Omega = [0,1]$. Let $(x_k)$ be a finite sequence in a non-commutative $L_q$-space. The non-commutative Khintchine inequalities say (when $1\le q\le 2$) that there is a constant $\beta_q$ independent of $x=(x_k)$ such that 
\begin{equation}\label{eq0.1}
|||x|||_q \le \beta_q\left(\int\left\|\sum r_k(t) x_k\right\|^q_q dt\right)^{1/q}
\end{equation}
where
\begin{equation}\label{eq0.2}
|||x|||_q \overset{\sst\text{def}}{=} \inf_{x_k=a_k+b_k} \left\{\left\|\left(\sum a^*_ka_k\right)^{1/2} \right\|_q + \left\|\left(\sum b_kb^*_k\right)^{1/2}\right\|_q\right\}.
\end{equation}
This was first proved in \cite{LP1} for $1<q<2$ and in \cite{LPP} for $q=1$ (the converse inequality is easy and holds
with constant $1$). One of the two proofs in \cite{LPP} derives this from the non-commutative (little) Grothendieck inequality proved in \cite{P1}.

In this paper, we follow an approach very similar to the original one in \cite{P1} to show that the validity of \eqref{eq0.1} for some $q$ with $1<q<2$ implies its validity (with another constant) for all $1\le p<q$;
we also make crucial use of more recent ideas from \cite{JP}. For that deduction the only assumption needed on $(r_k)$ is its orthonormality in $L_2([0,1])$. Thus our approach yields \eqref{eq0.1} also for more general ``lacunary'' sequences than the Rademacher functions. For instance, we may apply it to the ``$Z(2)$-sequences'' considered in \cite{Har} (see also \cite{Har2,BHar}). The result appears to be new in that case. Our argument can be viewed  as an operator space analogue of the classical fact, in Rudin's style (\cite{Ru}), that if a sequence of characters $\Lambda$  spans a Hilbert space in $L_q(G)$ ($G$ compact Abelian group, e.g.\ $G={\bb T}$) for some $q<2$ then it also does for all $p<q$. 
It implies that the space $\ell^n_1$ contains (as an operator space) a large subspace uniformly isomorphic (as an operator space) to $R_k+C_k$ with $k\sim n^{\frac12}$.
Another corollary (see Theorem \ref{sigmap}) is that 
there is a constant $c$ such that, for any $n$, the usual ``basis" of $S_1^n$ contains  
a $c$-unconditional subset of size $\ge   n^{3/2}$. This opens the door to 
various questions concerning the best possible size of such subspaces
and subsets. See the end of \S \ref{sec1} for some speculation on this.

Unfortunately we cannot prove our result (at the time of this writing) for $0<p<1$, for lack of a proof of Step 3 below. Thus we leave open the validity of \eqref{eq0.1} for $0<q<1$. Nevertheless we will be able to prove several partial results in that direction. In particular  (see \S \ref{sec3} and \S \ref{sec4}),
if $0<p\le 2$, given 
arbitrary scalar
coefficients $[x_{ij}]$, 
 we give a necessary and sufficient condition for the random matrix  
$$ [\pm x_{ij}]$$
to be in the Schatten class $S_p$ for almost all choices of signs.
This happens iff $[x_{ij}]$ admits a decomposition of the form
  $x_{ij} = a_{ij}+b_{ij}$ with
\[
\sum_i\left(\sum_j|a_{ij}|^2\right)^{p/2} <\infty \quad \text{and}\quad \sum_j\left(\sum_i |b_{ij}|^2\right)^{p/2}<\infty.
\]
We also show
that $[x_{ij}]$ defines a bounded Schur multiplier from $S_2$ to $S_p$ iff
it admits a decomposition of the form 
$
   x_{ij}= \psi_{ij}+\chi_{ij} $  {with}
$$\sum\limits_i \sup\limits_j|\psi_{ij}|^{2p/(2-p)}<\infty\quad{\rm and}\quad \sum\limits_j \sup\limits_i|\chi_{ij}|^{2p/(2-p)}<\infty.$$
In those two results, only the case $0<p<1$ is new. In passing we remind the reader that
when $0<p<1$, $L_p$-spaces (commutative or not), in particular the Schatten class $S_p$,  are
not normed spaces. They are only $p$-normed, i.e. for any pair $x,y$ in the space
we have $\|x+y\|^p\le \|x\|^p+ \|y\|^p$. 

In  the final section, we turn to the Kahane inequalities.
Recall  that  the latter are a vector valued version of the Khintchine inequalities valid for functions with values in an \emph{arbitrary} Banach space. It is natural to wonder whether
there are non-commutative analogues when one uses
the  \emph{operator space valued} non-commutative $L_p$-spaces introduced in \cite{P4}.
We observe that the hypercontractive inequalities, proved by Carlen and Lieb \cite{CL}
for the Fermionic case, remain valid for operator space valued functions,
and hence the Kahane inequalities
are valid in this Fermionic setting.
The point of this   simple remark  is that Kahane's inequality now appears as the Bosonic
case. The same remark is valid
    in any setting for which hypercontractivity has been
established. This applies  in particular to Biane's free hypercontractive inequalities \cite{Bi}.

\section{The case $\pmb{1\le p<2}$}\label{sec1}

Actually, $L_2([0,1])$ or $L_2(G)$ can be replaced here by any non-commutative $L_2$-space $L_2(\varphi)$ associated to a semi-finite generalized (i.e.\ ``non-commutative'') measure space,
and $(r_k)$ is then replaced by an
orthonormal sequence $(\xi_k)$ in $L_2(\varphi)$. Then
     the right-hand side of \eqref{eq0.1} is replaced by $\|\sum \xi_k\otimes x_k\|_{L_q(\varphi\times\tau)}$.
More precisely, by a (semi-finite) generalized measure space $(N,\varphi)$ we mean a von~Neumann algebra $N$ equipped with a faithful, normal, semi-finite trace $\varphi$.  Without loss of generality,
we may always reduce consideration to the $\sigma$-finite case.
Throughout this paper, we will use freely the basics of non-commutative integration as described in \cite{Ne} or \cite[Chap. IX]{Tak2}.

Let us fix another generalized measure space $(M,\tau)$. The inequality we are interested in now takes the following form:
\[
(K_q)\quad \left\{
\begin{array}{l}
\exists \beta_q \text{ such that for any finite sequence}\\
x = (x_k) \text{ in } L_q(\tau) \text{ we have}\\
|||x|||_q \le \beta_q \left\|\sum \xi_k\otimes x_k\right\|_{L_q(\varphi\times\tau)}\\
\text{where } |||\cdot|||_q \text{ is defined as in \eqref{eq0.2}.}
\end{array}\right.
\]

In the Rademacher case, i.e. when $(\xi_k)=(r_k) $,  we denote
$(Kh_q)$ instead of $(K_q)$, and we refer to these as
the non-commutative Khintchine inequalities.

We can now state our main result for the case $q\ge 1$.

\begin{thm}\label{thm1.1}
Let $1<q<2$. Recall that $(\xi_k)$ is assumed orthonormal in $L_2(\varphi)$. \\ Then $(K_q)\Rightarrow (K_p)$ for all $1\le p<q$.
\end{thm}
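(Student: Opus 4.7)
My plan is to mirror the commutative extrapolation recalled in the introduction, while compensating for the fact that the operator-space norm $|||\cdot|||_p$ genuinely depends on $p$. Fix a finite sequence $x=(x_k)$ in $L_q(\tau)$, set $S=\sum_k\xi_k\otimes x_k$, and let $\theta\in(0,1)$ be determined by $1/q=(1-\theta)/p+\theta/2$. After rescaling I may assume $\|S\|_p=1$.

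\textbf{Step 1 (Hölder on $S$).} The ordinary non-commutative Hölder inequality applied to $S\in L_p(\varphi\times\tau)\cap L_2(\varphi\times\tau)$ gives $\|S\|_q\le \|S\|_p^{1-\theta}\|S\|_2^\theta$. Because $(\xi_k)$ is orthonormal in $L_2(\varphi)$, $\|S\|_2^2=\sum_k\tau(x_k^*x_k)$, which at the $L_2$ level agrees with $|||x|||_2^2$ (column and row norms coincide at $p=2$). Combining with $(K_q)$ I obtain
$$|||x|||_q \le \beta_q\|S\|_q \le \beta_q\|S\|_p^{1-\theta}\|S\|_2^\theta,$$
and, unwinding $(K_q)$, a near-optimal decomposition $x_k=\alpha_k+\gamma_k$ realizing $|||x|||_q$ (in $L_q$) up to $\vp$.

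\textbf{Step 2 (from an $L_q$-decomposition to an $L_p$-decomposition).} This is the heart of the argument: what Step 1 delivers is control of column/row sums in $L_q$, whereas $(K_p)$ demands control in $L_p$. Following the Grothendieck-style scheme of \cite{P1} and the conditioning technique of \cite{JP}, I plan to introduce a positive "density" $D$ built from the polar decomposition of $S$ (or from a conditional expectation of $|S|^p$ onto the algebra generated by $(\xi_k)$), and to rewrite $x_k=D^{-\alpha}y_kD^{-\alpha}$ where $y_k=D^{\alpha}x_kD^{\alpha}$ has improved integrability. Applying $(K_q)$ to $(y_k)$ produces a decomposition whose two halves can be multiplied back by $D^{-\alpha}$ on either side; the three-term non-commutative Hölder inequality
$$\|D^{-\alpha}c\,D^{-\alpha}\|_p \le \|D^{-\alpha}\|_r\,\|c\|_q\,\|D^{-\alpha}\|_s, \qquad 1/p = 1/r + 1/q + 1/s,$$
together with its analogues for the column and row quadratic functionals, then converts $L_q$ column/row bounds into $L_p$ column/row bounds. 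With the right choice of $\alpha$, the factors $\|D^{-\alpha}\|_r$ and $\|D^{-\alpha}\|_s$ contribute exactly the missing power of $\|S\|_p$, so that after the standard rearrangement (as in the commutative extrapolation) one obtains $|||x|||_p\le C\,\beta_q^{1/(1-\theta)}\|S\|_p$.

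The hard part is Step 2. Both the use of the triangle inequality when summing the two pieces of a decomposition and the three-term Hölder inequality above force the underlying $L_p$-spaces to be genuine normed spaces. For $0<p<1$ they are only $p$-normed, and it is this missing convexity that I expect to be the one obstruction in the whole argument, explaining why $(K_p)$ must be left open in the range $0<p<1$ and why the theorem is stated only for $1\le p<q$.
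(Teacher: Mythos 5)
Your Step 1 does not start in the right place. In the commutative model the trick closes because $\|S\|_2=(\sum|a_j|^2)^{1/2}$ is \emph{exactly} the left-hand quantity one wishes to bound, so after $(\sum|a_j|^2)^{1/2}\le c_q\|S\|_p^{1-\theta}\|S\|_2^\theta$ one may divide. In the non-commutative setting $\|S\|_2=(\sum\|x_k\|_2^2)^{1/2}$ is essentially $|||x|||_2$, which is \emph{not} dominated by $|||x|||_q$ or $|||x|||_p$ for $q,p<2$. Already for $x_k=e_{kk}$ in $M_n$ one has $\|S\|_2=n^{1/2}$ while $|||x|||_q=n^{1/q}$, so from $|||x|||_q\le\beta_q\|S\|_p^{1-\theta}\|S\|_2^\theta$ nothing can be divided out: the inequality $\|S\|_2\lesssim|||x|||_q$ that you would need is false, with an error growing like $n^{1/q-1/2}$. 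This is exactly the phenomenon the introduction warns about: the span of $(\xi_k)$ has genuinely different operator space structures in $L_p$, $L_q$, $L_2$, so plain H\"older applied to $S$ does not substitute for the change of density.

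The paper's proof has a different architecture, a Maurey-style extrapolation with an auxiliary quantity $C_q(x)$ defined via the factorization $x_k=(f^{1/p-1/q}y_k+y_kf^{1/p-1/q})/2$, $f\in\mathcal D$, with $C_q(x)=\inf\|\sum\xi_k\otimes y_k\|_q$. This family interpolates between $C_p(x)=\|S\|_p$ (exponent $0$ at $q=p$) and $C_2(x)\approx|||x|||_p$ (Step 2). The three steps are: (1) $(K_q)$ applied to $y$ gives $|||x|||_p\le C'C_q(x)$; (2) from a near-optimal decomposition $x_k=a_k+b_k$ realizing $|||x|||_p$ one constructs a density $f$ (not from $S$!) with $C_2(x)\lesssim|||x|||_p$; (3) a change-of-density interpolation $C_q(x)\lesssim C_p(x)^{1-\theta}C_2(x)^\theta$. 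Combining and dividing out $|||x|||_p^{\theta}$ closes the loop. Two concrete differences from your proposal: first, the factorization of $x_k$ is the symmetric average $f^{1/r}y_k+y_kf^{1/r}$, not the two-sided sandwich $D^{-\alpha}y_kD^{-\alpha}$; the one-sided left/right multiplications are essential to match the column/row structure of $|||\cdot|||_p$ (right multiplication for $(\sum a^*a)^{1/2}$, left for $(\sum bb^*)^{1/2}$), and a sandwich would entangle the two. Second, the density is manufactured from the decomposition realizing $|||x|||_p$, not from $|S|$.

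Finally, your diagnosis of the restriction $p\ge1$ is off the mark. The triangle inequality and H\"older both survive with $p$-norm constants when $0<p<1$, and the paper in fact uses them freely in the $p<1$ partial results of \S3 and \S4. The genuine obstruction (which the paper isolates as ``Step 3'') is the interpolation inequality \eqref{eq1.6}, namely
\[
\|T(\theta)\|_q\le c\,\|T(0)\|_p^{1-\theta}\,\|T(1)\|_2^\theta,
\]
where $T(\theta)$ is the Schur-multiplier deformation $y_k(\theta)_{ij}=2(f_i^{\theta/r}+f_j^{\theta/r})^{-1}(x_k)_{ij}$. Its proof for $1<p<2$ rests on the boundedness of the triangular projection on $S_p$, and for $p=1$ on the Junge--Parcet interpolation theorem; neither is available for $0<p<1$ (the triangular projection is unbounded on $S_1$ already, and there is no known substitute below $1$). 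So the barrier is a missing operator-valued Calder\'on--Zygmund / triangular-truncation ingredient, not a failure of convexity per se.
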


Here is a sketch of the argument. We denote
\[
 S = \sum \xi_k\otimes x_k.
\]
Let ${\cl D}$ be the collection of all ``densities,'' i.e.\ all $f$ in $L_1(\tau)_+$ with $\tau(f) = 1$. Fix $p$ with $0<p\le q$. Then we denote for $x=(x_k)$ 
\[
C_q(x) = \inf\left\{\left\|\sum \xi_k\otimes y_k\right\|_q\right\}
\]
where $\|\cdot\|_q$ is the norm in $L_q(\varphi\otimes\tau)$ and the infimum runs over all sequences $y = (y_k)$ in $L_q(\tau)$ for which there is $f$ in ${\cl D}$ such that
\[
 x_k = (f^{\frac1p-\frac1q}y_k + y_kf^{\frac1p-\frac1q})/2.
\]
Note that $C_p(x) = \|S\|_p$.

\begin{rem}\label{rem1.2}
 Assume $x_k=x^*_k$ for all $k$. Then
\begin{equation}\label{eq1.1}
 |||x|||_q = \inf\left\{\left\|\left( \sum \alpha^*_k\alpha_k\right)^{1/2}\right\|_q\right\}
\end{equation}
where the infimum runs over all decompositions
\[
 x_k = \text{Re}(\alpha_k) = (\alpha_k+\alpha^*_k)/2.
\]
Indeed, $x_k = a_k+b_k$ implies $x_k = \text{Re}(a_k+b^*_k)$. Let $\alpha_k = a_k+b^*_k$. We have (assuming $q\ge 1$)
\[
\left\|\left(\sum \alpha^*_k\alpha_k\right)^{1/2}\right\|_q \le \left\|\left(\sum a^*_ka_k\right)^{1/2} \right\|_q + \left\|\left(\sum b_kb^*_k\right)^{1/2}\right\|_q.
\]
Therefore $\inf\|(\sum \alpha^*_k\alpha_k)^{1/2}\|_q \le |||x|||_q$. Since the converse inequality is obvious, this proves \eqref{eq1.1}. 
\end{rem}

The proof of Theorem \ref{thm1.1} is based on a variant of ``Maurey's extrapolation principle'' (see \cite{Ma,Ma2}) This combines three steps:\ (here $C',C'',C''',\ldots$ are constants independent of $x=(x_k)$ and we wish to emphasize that here $p$ remains fixed while the index $q$ in $C_q(x)$ is such that $p<q\le2$).\ms

\begin{stp}\label{stp1}
Assuming $(K_q)$ we have
\[
 |||x|||_p \le C'C_q(x).
\]
\end{stp}

\begin{stp}\label{stp2}
\[
C_2(x) \le C''|||x|||_p.
\]
Actually we will prove also the converse inequality (up to a constant).\ms 
\end{stp}

\begin{stp}\label{stp3}
\[
 C_q(x) \le C'''C_p(x)^{1-\theta} C_2(x)^\theta
\]
where $\frac1q = \frac{1-\theta}p + \frac\theta2$. (Recall $p<q<2$ so that $0<\theta<1$.)

The three steps put all together yield
\begin{align*}
 |||x|||_p &\le C'C''' C_p(x)^{1-\theta} (C''|||x|||_p)^\theta\\
\intertext{and hence}
|||x|||_p &\le C^{''''}C_p(x) = C^{''''}\|S\|_p.
\end{align*}
\end{stp}

\begin{proof}[Proof of Step \ref{stp1}]
This is easy:\ We simply apply $(K_q)$ to $y=(y_k)$.  
More precisely, fix $\vp>0$. Let $y=(y_k)$ and $f$ in ${\cl D}$    such that
$x_k = (f^{\frac1p-\frac1q}y_k + y_kf^{\frac1p-\frac1q})/2.$
 and
$$ \left\|\sum \xi_k\otimes y_k\right\|_q< C_q(x) (1+\vp).$$
By $(K_q)$ we have $|||y|||_q< \beta_q C_q(x) (1+\vp).$
Let $a_k,b_k$ be such that $y_k = a_k+b_k$ with
\[
\left\|\left(\sum a^*_ka_k\right)^{1/q}\right\| + \left\|\left(\sum b_kb^*_k\right)^{1/2}\right\|_q \le \beta_q C_q(x) (1+\vp)
\]
we have
\[
 2x_k = f^{\frac1p-\frac1q}(a_k+b_k) + (a_k+b_k)f^{\frac1p-\frac1q}.
\]
But it is easy to check that for some $g,h\in {\cl D}$ there are $\alpha_k,\beta_k$ such that
\[
 a_k = \alpha_k g^{\frac1q-\frac12} \qquad b_k = h^{\frac1q-\frac12}\beta_k
\]
with
\begin{align*}
 \left(\sum\|\alpha_k\|^2_2\right)^{1/2} &\le \left\|\left(\sum a^*_ka_k\right)^{1/2}\right\|_q\\
\intertext{and}
\left(\sum \|\beta_k\|^2_2\right)^{1/2} &\le \left\|\left(\sum b_kb^*_k\right)^{1/2}\right\|_q.
\end{align*}
Thus we find
\[
2x_k = f^{\frac1p-\frac1q} \alpha_k g^{\frac1q-\frac12} + f^{\frac1p-\frac1q} h^{\frac1q-\frac12} \beta_k
+ \alpha_k g^{\frac1q-\frac12} f^{\frac1p-\frac1q} + h^{\frac1q-\frac12} \beta_k f^{\frac1p-\frac1q}.
\]
Let $\frac1r = \frac1p-\frac12$.
Note that by H\"older's inequality (since $\frac1p-\frac12 = (\frac1p-\frac1q) + (\frac1q-\frac12)$)
\[
 \|f^{\frac1p-\frac1q} h^{\frac1q-\frac12}\|_r \le 1\quad {\rm and}\quad  \|g^{\frac1q-\frac12} f^{\frac1p-\frac1q}\|_r \le 1.
\]
Let $x'_k = f^{\frac1p-\frac1q}h^{\frac1q-\frac12}\beta_k + \alpha_k g^{\frac1q-\frac12} f^{\frac1p-\frac1q}$. Then, again by  H\"older, we have
$$
|||x'|||_p  \le \left(\sum \|\alpha_k\|^2_2\right)^{1/2} + \left(\sum\|\beta_k\|^2_2\right)^{1/2}
 \le \beta_qC_q(x) (1+\vp).$$
Similarly,   let 
\[
 x''_k = f^{\frac1p-\frac1q} \alpha_k g^{\frac1q-\frac12} + h^{\frac1q-\frac12} \beta_k f^{\frac1p-\frac1q}.
\]
We claim that
\[
||||x''|||_p \le  \beta_qC_q(x)(1+\vp).
\]
Thus we obtain, since $2x=x'+x''$
\begin{align*}
2|||x|||_p &\le |||x'|||_p + |||x''|||_p \\
&\le 2 \beta_qC_q(x) (1+\vp),
\end{align*}
and hence Step \ref{stp1} holds with $C'= \beta_q$.

We now check the above  claim. Define $\theta$ by $\frac1q = \frac{1-\theta}p + \frac\theta2  $ and let
\[
x_k(z) = f^{\frac1p-\frac1{q(z)}} \alpha_k g^{\frac1{q(z)}-\frac12}
\]
where $\frac1{q(z)} \overset{\sst\text{def}}{=} \frac{1-z}p + \frac{z}2$. 
We will use the 
probability measure  $\mu_\theta$ on the boundary of the complex strip ${\cal S}=\{0<\Re({z})<1\}$ that is
the Jensen (i.e. harmonic)  measure for the point $\theta$. This gives mass $\theta$ (resp. $1-\theta   $) to the vertical line 
 $\{ \Re({z})=1\}$ (resp. $\{ \Re({z})=0\}   $). By perturbation, we may assume
 that $f$ and $g$ are suitably bounded below so that $x_k(.)$ is a ``nice" $L_p(\tau)$-valued 
 analytic  function on $ {\cal S}$, i.e.   bounded and continuous on $\bar{\cal S}$. Then, since $q(\theta)=q$, we have by Cauchy's formula
\[
  f^{\frac1p-\frac1q} \alpha_k g^{\frac1q-\frac12}  = x_k(\theta) = \intl_{\Re(z)\in \{0,1\}} x_k(z)\ d\mu_\theta(z),
\]
 {  but} 
$\forall t\in {\bb R} \quad x_k(it)  = U(it)\alpha_k V(it) g^{\frac1p-\frac12}$  {\rm and }$
x_k(1+it)  =  f^{\frac1p-\frac12} U(1+it)\alpha_k V(1+it)
$,
where $U(it)=U(1+it)=f^{it(\frac1p-\frac12)}$ and $V(it)=V(1+it)=g^{it(\frac12-\frac1p)}$ are unitary. This yields
\[
 f^{\frac1p-\frac1q} \alpha_k g^{\frac1q-\frac12} = (1-\theta)\alpha^{(0)}_k g^{\frac1p-\frac12} + \theta f^{\frac1p-\frac12} \alpha^{(1)}_k
\]
where   $\alpha^{(1)}$  (resp. $\alpha^{(0)}$) are  the corresponding  averages over $\{ \Re({z})=1\}$ (resp. $\{ \Re({z})=0\}$)  satisfying  $(\sum\|\alpha^{(0)}_k\|^2_2)^{1/2} \le (\sum\|\alpha_k||^2_2)^{1/2}$ and  $(\sum\|\alpha^{(1)}_k\|^2_2)^{1/2} \le (\sum\|\alpha_k||^2_2)^{1/2}$, and
hence we find $|||( f^{\frac1p-\frac1q} \alpha_k g^{\frac1q-\frac12})|||_p \le  (\sum\|\alpha_k\|^2_2)^{1/2}$. Similarly, we find $|||( h^{\frac1q-\frac12} \beta_k f^{\frac1p-\frac1q})|||_p \le  (\sum\|\beta_k\|^2_2)^{1/2}$. Thus we obtain as claimed
$$ ||||x''|||_p \le (\sum\|\alpha_k\|^2_2)^{1/2}+(\sum\|\beta_k\|^2_2)^{1/2}\le \beta_qC_q(x)(1+\vp) .$$
\end{proof}
\begin{proof}[Proof of Step \ref{stp2}]
Assume $|||x|||_p<1$, i.e.\ $x_k = a_k+b_k$ with $\|(\sum a^*_ka_k)^{1/2}\|_p + \|(\sum b_kb^*_k)^{1/2}\|_p < 1$. 
By semi-finiteness of $\tau$, we may assume there exists
  $f_0>0$ in ${\cl D}$. (In the finite case
  we can simply take $f_0=1$.) Let $f' = (\vp(f_0)^{2/p} + \sum a^*_ka_k + \sum b_kb^*_k)^{1/2}$. We can choose $\vp>0$ small enough so that
\[
 \|f'\|^p_p < 2 
\]
(using the fact that $L_{p/2}(\tau)$ is $p/2$-normed). We can then write
\[
 x_k = a'_kf' + f'b'_k
\]
where $a'_k = a_k(f')^{-1}$ and $b'_k = (f')^{-1}b_k$. Let $f = (f')^p( \tau(f^{\prime p}))^{-1}$. Note that $f\in{\cl D}$ and we have
\begin{equation}\label{eq1.2}
 x_k = \alpha_k f^{\frac1p-\frac12} + f^{\frac1p-\frac12}\beta_k
\end{equation}
where 
\[
\alpha_k= \|f'\|_p a'_k f^{\frac12}\qquad \beta_k = \|f'\|_p f^{\frac12}b'_k.
\]
Note that
$
 \sum a^{\prime*}_ka'_k  = (f')^{-1} \sum a^*_ka_k(f')^{-1}
\le 1$
 {and similarly}
$\sum b'_kb^{\prime *}_k \le 1$.
Therefore
$$
 \left(\sum\|\alpha_k\|^2_2\right)^{1/2} = \|f'\|_p \left\|\left(\sum a^{\prime *}_ka^{\prime}_k\right)^{1/2} f^{1/2}\right\|_2
 \le \|f'\|_p \le 2^{\frac1p },$$
and similarly
\[
 \left(\sum \|\beta_k\|^2_2\right)^{1/2} \le 2^{\frac1p }.
\]
We will now modify this to obtain
$\alpha_k=\beta_k$. More precisely we claim there are $y_k$ in $L_2(\tau)$ such that 
$x_k = (f^{\frac1p-\frac12}y_k + y_k f^{\frac1p-\frac12})/2$ and 
\begin{equation}\label{eq1.3}
 \left(\sum \|y_k\|^2_2\right)^{1/2} \le 2\left(\left(\sum\|\alpha_k\|^2_2\right)^{1/2} + \left(\sum \|\beta_k\|^2_2\right)^{1/2}\right).
\end{equation}
Let $\frac1r={\frac1p-\frac12}$.
 To prove this claim, let $E$ be the dense subspace
 of $L_2(\tau)\oplus \cdots \oplus L_2(\tau)$ formed
 of families $h=(h_k)$ such that $f^{\frac1r} h_k + h_k f^{\frac1r}\in L_2(\tau)$ for all $k$.
 Then for all $h$ in $E$ we have $\sum \langle x_k,h_k\rangle=
 \sum \langle \alpha_k f^{\frac1r}+f^{\frac1r}\beta_k,h_k\rangle$ and hence
 $$|\sum \langle x_k,h_k\rangle| \le (\sum\|\alpha_k\|^2_2 )^{1/2} (\sum\|h_k f^{\frac1r} \|^2_2 )^{1/2}
 +(\sum\|\beta_k\|^2_2 )^{1/2} (\sum\| f^{\frac1r} h_k \|^2_2 )^{1/2}
 .$$
 By an elementary calculation
 one verifies easily that
 $\|f^{\frac1r} h_k\|^2_2\le \|f^{\frac1r} h_k+h_k f^{\frac1r}\|^2_2$ and similarly
 $\|h_k f^{\frac1r} \|^2_2\le \|f^{\frac1r} h_k+h_k f^{\frac1r}\|^2_2$.
 Therefore we find
$$|\sum \langle x_k,h_k\rangle| \le ((\sum\|\alpha_k\|^2_2 )^{1/2}+(\sum\|\beta_k\|^2_2 )^{1/2})
(\sum\|f^{\frac1r} h_k+h_k f^{\frac1r}\|^2_2)^{1/2}.
$$
From this our claim that there are $(y_k)$ in $L_2(\tau)$ satisfying \eqref{eq1.3}
follows immediately by duality.
Then \eqref{eq1.3} implies
\[
\left\|\sum \xi_k\otimes y_k\right\|_2= \left(\sum\|y_k\|^2_2\right)^{1/2} \le 4\cdot 2^{\frac1p},
\]
and we obtain $C_2(x)\le   2^{2+\frac1p} |||x|||_p$, completing step 2.
\end{proof}

\begin{rem}\label{rem1.22}
 Conversely we have
\begin{equation}\label{eq1.4}
 |||x|||_p \le C_2(x).
\end{equation}
Indeed, if $C_2(x)<1$ then $x_k = (f^{\frac1p-\frac12}y_k + y_kf^{\frac1p-\frac12})/2$ with
\[
 \left\|\sum \xi_k\otimes y_k\right\|_2 = \left(\sum\|y_k\|^2_2\right)^{1/2} < 1.
\]
Let now
\[
 a_k = (y_kf^{\frac1p-\frac12})/2\quad \text{and}\quad b_k = (f^{\frac1p-\frac12}y_k)/2.
\]
We have (recall the notation $|T| = \sqrt{T^*T}$)
\[
 2\left(\sum a^*_ka_k\right)^{1/2} = \left|\left(\sum y^*_ky_k\right)^{1/2} f^{\frac1p-\frac12}\right|
\]
and hence (setting $\frac1r = \frac1p-\frac12$) by H\"older
\begin{align*}
 2\left\|\left(\sum a^*_ka_k\right)^{1/2}\right\|_p &\le \left\|\left(\sum y^*_ky_k\right)^{1/2}\right\|_2\|f^{\frac1p-\frac12}\|_r\\
&< 1.
\end{align*}
Similarly $\|(\sum b_kb^*_k)^{1/2}\|_p < 1/2$. Thus we obtain $|||x|||_p < 1$. By homogeneity this proves \eqref{eq1.4}. 
\end{rem}

\begin{proof}[Proof of Step \ref{stp3}]
 
Fix $\vp>0$. Let $y_k$ be such that $x_k = (f^{1/r}y_k+y_kf^{1/r})/2$ with
\[
 \left\|\sum \xi_k\otimes y_k\right\|_2 < C_2(x)(1+\vp).
\]
Let us assume that $(M,\tau)$ is $M_n$ equipped with usual trace. We will use the orthonormal basis for which $f$ is diagonal with coefficients denoted by $(f_i)$. We have then
\[
 (y_k)_{ij} = 2(f^{\frac1r}_i + f^{\frac1r}_j)^{-1}(x_k)_{ij}.
\]
We define $y_k(\theta)$ by setting
\[
 y_k(\theta)_{ij} = 2(f^{\frac\theta{r}}_i + f^{\frac\theta{r}}_j)^{-1} (x_k)_{ij}.
\]
Note that $y_k(0) = x_k$ while $y_k(1) = y_k$.
Let $T(\theta) = \sum \xi_k\otimes y_k(\theta)$. 

\n We claim that if $\frac1q = \frac{1-\theta}p + \frac\theta2$ and $1\le p<q\le 2$
\begin{equation}\label{eq1.6}
\|T(\theta)\|_q \le c\|T(0)\|^{1-\theta}_p \|T(1)\|^\theta_2
\end{equation}
for some constant $c$ depending only on $p$ and $q$. As observed in \cite{JP}, when $p>1$, this is easy to prove using the boundedness of the triangular projection on $S_p$. The case $p=1$ is a consequence of Theorem 1.1a in \cite{JP} (the latter uses   \cite[Th.~4.5]{P3}).
See \S \ref{sec10} for a detailed justification.

\n Therefore we obtain
\[
C_q(x) \le \|T(\theta)\|_q \le 2^{1-\theta} c C_p(x)^{1-\theta} (C_2(x)(1+\vp))^\theta,
\]
i.e.\ we obtain Step \ref{stp3} in the matricial case. Note 
  that the argument works assuming merely that the density 
$f$ has finite spectrum.
\end{proof}
\begin{proof}[Proof of  Theorem \ref{thm1.1}]
   Combining the 3 steps, we have already indicated the proof in the case
   $M=M_n$ or assuming merely that the density 
$f$ has finite spectrum.
We will now prove the general semi-finite case.
   We  return to Step 2 . We claim that for any $\delta>0$ we can find $(x'_k)$ such that $|||(x_k)-(x'_k)|||_p < \delta |||x|||_p$ and such that 
\[
 C_2(x') \le 2\cdot 2^{\frac1p}(1+\delta) |||x'|||_p,
\] where the definition of $C_2(x')$ is now restricted
to densities with finite spectrum.

Indeed, one may assume by homogeneity that $0<|||x|||_p<1$. Let $r$ be defined by $\frac1r = \frac1p - \frac12$. Let $\delta' = (\delta/n)|||x|||_p$. Then let $f,y_k,\dots$ be as in the above proof of Step~2 and let $g\in {\cl D}$ be an element with finite spectrum such that $\|f^{\frac1r}-g^{\frac1r}\|_r < (2\cdot 2^{\frac1p})^{-1} \delta'$. Note that $g$ exists by the semi-finiteness of $\tau$. Then let
\[
 x'_k = (g^{\frac1r} y_k + y_kg^{\frac1r})/2.
\]
Note that (by H\"older) $\|x'_k-x_k\|_p<\delta'$ and hence (assuming $p\ge 1$) $|||x-x'|||_p < \delta|||x|||_p$.

We now observe that the proof of Step~3 applies if we replace $(x,f)$ by $(x',g)$. Thus if we apply the three steps to $x'$ we obtain for some constant $C_4$
\[
 |||x'|||_p \le C_4C_p(x')  = C_4\left\|\sum \xi_k \otimes x'_k\right\|_p.
\]
But since $(x'_k)$ is an arbitrary close perturbation of $(x_k)$ in $L_p$-norm, we conclude that $(K_p)$ holds.\end{proof}

\begin{rem}\label{rem1.3}
In Theorem \ref{thm1.1}, the assumption that $(\xi_k)$ is orthonormal in $L_2(\varphi)$ 
(that is only used in Step 2) can be replaced by the following one:\ for any finite sequence $y = (y_k)$ in $L_2(M,\tau)$ we have
\begin{equation}\label{eq1.5}
 \left\|\sum \xi_k\otimes y_k\right\|_{L_2(\varphi\times\tau)} \le \left(\sum \|y_k\|^2_2\right)^{1/2}.
\end{equation}
The proof (of Step \ref{stp2}) for that case is identical.
\end{rem}

 Assume for simplicity that $(M,\tau)$ is $M_n$ equipped with its usual trace.

Let $S = \sum \xi_k\otimes x_k$, $x_k\in M_n$. Equivalently $S = [S_{ij}]$ with $S_{ij}\in L_2(\varphi)$. Consider $f\in {\cl D}$. The proof of Step \ref{stp3} becomes straightforward if \eqref{eq1.6} holds.
In the case $p\ge 1$, we invoked \cite{JP} to claim that \eqref{eq1.6} is indeed true, but we do not know whether it still holds when $0<p<1$.
Nevertheless, there is a situation when \eqref{eq1.6} is easy to check, when the following condition $(\gamma',\gamma'')$ holds:

\vspace{0.25cm}

\n {\bf Condition $(\gamma',\gamma'')$:} Let $\gamma',\gamma''$ be positive numbers. We say that $S = \sum \xi_k \otimes x_k$ satisfies the condition $(\gamma',\gamma'')$ if we can find $f$ in ${\cl D}$ and $y_k$ such that $x_k = (f^{\frac1r} y_k + y_kf^{\frac1r})/2$ and such that $T = \sum \xi_k\otimes y_k$ satisfies \emph{simultaneously} the following two bounds
\begin{align}\label{eq1.7}
 \|T\|_2 &\le \gamma'C_2(x)\\
\label{eq1.8}
\|(1\otimes f^{\frac1r}) T\|_p &\le \gamma''\|S\|_p.
\end{align}
If we set $F = 1\otimes f$, we can rewrite \eqref{eq1.8} as
\begin{equation}\label{eq1.9}
 \|F^{\frac1r}T\|_p \le \gamma''\|F^{\frac1r}T + TF^{\frac1r}\|_p /2,
\end{equation}
and hence by the triangle inequality (or its analogue for $p<1$), since $S=(F^{\frac1r}T + TF^{\frac1r})/2$,
we have automatically for a suitable $\gamma'''$ (depending only on $\gamma''$ and $p$)
\begin{equation}\label{eq1.8+}
\|T F^{\frac1r} \|_p \le \gamma'''\|S\|_p.
\end{equation}
\begin{rem}
 The reason why condition $(\gamma',\gamma'')$ resolves our problem is that the one-sided version of \eqref{eq1.6} is quite easy:\ we have
\begin{equation}\label{eq1.10}
 \|F^{-\frac\theta{r}}S\|_q \le \|S\|^{1-\theta}_p \|F^{-\frac1r}S\|^\theta_2.
\end{equation}
Indeed, if we let $T = F^{-\frac1r}S$ then \eqref{eq1.10} becomes
\begin{equation}\label{eq1.11}
 \|F^{\frac{1-\theta}r}T\|_q \le \|F^{\frac1r}T\|^{1-\theta}_p \|T\|^\theta_2
\end{equation}
and the latter holds by Lemma \ref{lem1.5} below.
\end{rem}

\begin{thm}\label{thm1.4}
Let $(\xi_k)$ be a sequence in $L_2(\varphi)$ orthonormal or merely satisfying \eqref{eq1.5}. Let $0<p<q<2$. Then, if we assume the condition $(\gamma',\gamma'')$
(as above but for any $S$), the implication $(K_q)\Rightarrow (K_p)$ holds, where the resulting constant $\beta_p$ depends on $p,q,\beta_q$ and also on $\gamma',\gamma''$.
\end{thm}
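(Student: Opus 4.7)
The plan is to follow the three-step strategy used for Theorem~\ref{thm1.1}, adapting it to $0<p<1$ and replacing the (unavailable) two-sided estimate~\eqref{eq1.6} by an argument based on condition $(\gamma',\gamma'')$ and Lemma~\ref{lem1.5}. Steps 1 and 2 go through essentially verbatim:\ they use H\"older on $L_r$, density approximation, and Cauchy-type averaging over a complex strip, all valid for $0<p\le 2$ (one replaces the triangle inequality on $|||\cdot|||_p$ by its $p$-quasi form when $p<1$, and uses that $L_{p/2}(\tau)$ is $(p/2)$-normed). This gives $|||x|||_p\le C'C_q(x)$ and $C_2(x)\le C''|||x|||_p$ with constants depending only on $p,q,\beta_q$.

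The crux is the analogue of Step~3, namely $C_q(x)\le C'''\|S\|_p^{1-\theta}C_2(x)^\theta$ under $(\gamma',\gamma'')$. Let $(f,y_k,T)$ be the witness furnished by the condition, and set $\alpha=\theta/r=1/p-1/q$, $\beta=(1-\theta)/r$. I take as the $C_q$-witness $y'_k=2\tilde l_\alpha^{-1}(x_k)$, where $\tilde l_\alpha(z)=f^\alpha z+zf^\alpha$, so that $x_k=(f^\alpha y'_k+y'_k f^\alpha)/2$ by construction. Since $F^{1/r}$ commutes with $\tilde L_\alpha$, the corresponding operator is $S'=F^{1/r}V+VF^{1/r}$ with $V=\tilde L_\alpha^{-1}(T)$. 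Using $T=F^\alpha V+VF^\alpha$, one verifies
\[
S'=(F^\beta T+TF^\beta)-(F^\beta VF^\alpha+F^\alpha VF^\beta).
\]
By Lemma~\ref{lem1.5} applied to $T$, together with the hypotheses of $(\gamma',\gamma'')$ and \eqref{eq1.8+},
\[
\|F^\beta T\|_q\le(\gamma'')^{1-\theta}(\gamma')^\theta\|S\|_p^{1-\theta}C_2(x)^\theta,
\]
and an analogous bound for $\|TF^\beta\|_q$ with $\gamma''$ replaced by $\gamma'''$.

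The main obstacle is to control the remainder $R=F^\beta VF^\alpha+F^\alpha VF^\beta$ in $L_q$. Writing $F^\beta VF^\alpha=\Phi(F^\beta T)$ and $F^\alpha VF^\beta=\widetilde\Phi(TF^\beta)$, where $\Phi$ and $\widetilde\Phi$ are Schur multipliers with symbols $\mu^\alpha/(\lambda^\alpha+\mu^\alpha)$ and $\lambda^\alpha/(\lambda^\alpha+\mu^\alpha)$ respectively, the required estimate $\|R\|_q\lesssim\|F^\beta T\|_q+\|TF^\beta\|_q$ reduces to the $L_q$-boundedness of these multipliers. Since each symbol is a smooth bounded function of $\log(\mu/\lambda)$, this boundedness on $S_q$ for $1<q<\infty$ follows from classical Mellin/Fourier multiplier theory. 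Once $\|S'\|_q$, and hence $C_q(x)$, is bounded as required, chaining the three steps exactly as in the proof of Theorem~\ref{thm1.1} yields $|||x|||_p\le\beta_p\|S\|_p$, that is $(K_p)$, with $\beta_p$ depending only on $p,q,\beta_q,\gamma',\gamma''$.
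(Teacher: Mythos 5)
Your argument is essentially the paper's own: you take the same witness for $C_q(x)$ (your $S'=2\tilde L_\alpha^{-1}(S)$ is exactly the paper's $T(\theta)$), invoke the same Hölder-type Lemma~\ref{lem1.5} on $F^\beta T$ and $TF^\beta$, and reduce to the $L_q$-boundedness of the Schur multipliers with symbol $\lambda_j^\alpha/(\lambda_i^\alpha+\lambda_j^\alpha)$, which is precisely Lemma~\ref{lem1.6}(ii). Your rearrangement $S'=(F^\beta T+TF^\beta)-R$ followed by bounding $R$ is algebraically the same as the paper's direct split of $T(\theta)$ via the complementary multipliers $\lambda_i^\alpha/(\lambda_i^\alpha+\lambda_j^\alpha)$ applied to $F^{(1-\theta)/r}T$ and $TF^{(1-\theta)/r}$, so this is not a genuinely different route.
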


For simplicity we will prove this again assuming that $(M,\tau)$ is $M_n$ equipped with its usual trace. See the above proof of Theorem \ref{thm1.1} for indications on how to check the general case.

\begin{rem}
 If $(K_p)$ holds, then there are constants $(\gamma',\gamma'')$ depending only on $p$ such that the condition $(\gamma',\gamma'')$ holds. Indeed by the above proof of Step \ref{stp2} we have
\[
 2x_k= f^{\frac1r} y_k + y_kf^{\frac1r}
\]
with
\[
 \left(\sum \|y_k\|^2_2\right)^{1/2} \le C''|||x|||_p
\]
and hence by H\"older and  \eqref{eq1.5}
\[
 \|F^{\frac1r}T\|_p \le \|T\|_2 \le \left(\sum \|y_k\|^2_2\right)^{1/2} \le C''|||x|||_p.
\]
Now if $(K_p)$ holds we have
\[
 |||x|||_p \le \beta_p\|S\|_p = \beta_p/2\|F^{\frac1r}T + TF^{\frac1r}\|_p 
\]
therefore we find $\|F^{\frac1r}T\|_p \le C''\beta_p/2\|F^{\frac1r}T +TF^{\frac1r}\|_p$
i.e. \eqref{eq1.9} holds
\end{rem}

The following two lemmas will be used.

\begin{lem}\label{lem1.5}
Let $(M,\tau)$ be a generalized measure space. Consider $F\ge 0$ in $L_1(\tau)$. Assume $0<p<q<2$. Let $\frac1r = \frac1p-\frac12$ and let $\theta$ be such that $\frac1q = \frac{1-\theta}p + \frac\theta2$. Then for any $V$ in $L_2(\tau)$ we have
\begin{align*}
\|F^{\frac{1-\theta}r}V\|_q &\le  \|F^{\frac1r}V\|^{1-\theta}_p \|V\|^\theta_2\\
\intertext{and}
\|VF^{\frac{1-\theta}r}\|_q &\le \|VF^{\frac1r}\|^{1-\theta}_p \|V\|^\theta_2.
\end{align*}
\end{lem}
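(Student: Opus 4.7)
The plan is to prove the first inequality by complex interpolation on the analytic family $\Phi(z) = F^{(1-z)/r} V$; the second inequality will follow by the mirror argument on $\Psi(z) = V F^{(1-z)/r}$, using that right multiplication by a unitary also preserves every $L_s(\tau)$-quasi-norm. By a routine truncation of $F$ (for example, replacing $F$ by $\chi_{[\vp,1/\vp]}(F)F$) together with an approximation of $V$ by a bounded element, I may assume that $F$ has spectrum bounded above and bounded below away from $0$, and that $V\in M$. Under this reduction, $\Phi$ is a well-defined bounded $L_q(\tau)$-valued function, continuous on the closed strip $\bar S = \{z : 0\le \Re z\le 1\}$ and analytic on its interior.

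The boundary values are immediate. Since $F\ge 0$, the operators $F^{-it/r}$ are unitaries in $M$ for every real $t$, and left multiplication by a unitary preserves each $L_s$-quasi-norm (because $|uX|=|X|$). Commuting $F^{-it/r}$ past $F^{1/r}$ (both being in the functional calculus of $F$) I get
$$\|\Phi(it)\|_p = \|F^{-it/r} F^{1/r} V\|_p = \|F^{1/r} V\|_p,\qquad \|\Phi(1+it)\|_2 = \|F^{-it/r} V\|_2 = \|V\|_2.$$
Since $\Phi(\theta) = F^{(1-\theta)/r} V$ and $\frac1q = \frac{1-\theta}p + \frac\theta 2$, the Stein/Hadamard three-lines principle applied to $\Phi$ between $L_p(\tau)$ and $L_2(\tau)$ then gives
$$\|F^{(1-\theta)/r} V\|_q \le \|F^{1/r} V\|_p^{1-\theta}\|V\|_2^\theta,$$
which is exactly the desired bound.

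The only point that genuinely requires care is the range $0<p<1$, where $L_p(\tau)$ is merely a $p$-normed quasi-Banach space and the classical duality argument via $(L_p)^*=L_{p'}$ is no longer available. For this case one invokes the complex interpolation theory of non-commutative $L_p$-spaces in the quasi-Banach range (as developed in \cite{PX}), or argues directly by showing that $z\mapsto\tau(|\Phi(z)|^{q(\Re z)})$ satisfies an appropriate log-subharmonicity under a suitable reparametrization. This quasi-Banach interpolation step is the main technical obstacle; the rest of the proof is a routine reduction together with a boundary-value calculation. Once the first inequality is established, the second follows verbatim from $\Psi(z) = VF^{(1-z)/r}$, with the boundary estimates $\|\Psi(it)\|_p = \|VF^{1/r}\|_p$ and $\|\Psi(1+it)\|_2 = \|V\|_2$.
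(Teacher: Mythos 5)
Your plan follows the complex-interpolation route, which the paper only mentions in passing (``since the complex interpolation of non-commutative $L_p$-spaces is valid in the whole range $0<p<\infty$, this can be deduced from the 3 line lemma''); the paper's written-out proof of Lemma~\ref{lem1.5} instead uses a more elementary argument that avoids interpolation altogether: H\"older's inequality combined with Kosaki's version of the Araki--Lieb--Thirring inequality \cite{Ko}. Concretely, the paper first reduces to the right-multiplication version by replacing $V$ with $V^*$, writes $\|VF^{(1-\theta)/r}\|_q=\||V|^{\theta}\,|V|^{1-\theta}F^{(1-\theta)/r}\|_q$, applies H\"older with exponents $2/\theta$ and $p/(1-\theta)$ (valid for every $p>0$), and then bounds $\||V|^{1-\theta}F^{(1-\theta)/r}\|_{p/(1-\theta)}\le\||V|F^{1/r}\|_p^{1-\theta}$ via the Araki--Lieb--Thirring inequality in Kosaki's form. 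The point of this alternative is exactly to sidestep the difficulty you correctly single out as ``the main technical obstacle'': a Stein/three-lines principle for $L_p$-valued analytic functions when $0<p<1$, where $L_p$ is only a quasi-Banach space and scalar subharmonicity must be established by hand. Your outline is otherwise sound --- the truncation of $F$ and approximation of $V$ is legitimate by monotone convergence, the boundary identities $\|\Phi(it)\|_p=\|F^{1/r}V\|_p$ and $\|\Phi(1+it)\|_2=\|V\|_2$ are correct because $F^{-it/r}$ is unitary and commutes with $F^{1/r}$, and after the truncation $\Phi$ is an entire $M$-valued function --- but you are ultimately delegating the quasi-Banach three-lines step to a citation (the paper would cite \cite{X2} here rather than \cite{PX}), whereas the paper's H\"older-plus-ALT proof requires no such input and is self-contained for all $0<p<q<2$.
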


\begin{proof}
It suffices to show
\begin{align}\label{eq1.7+}
\|VF^{(1-\theta)(\frac1p-\frac12)}\|_q &\le \|VF^{\frac1p-\frac12}\|^{1-\theta}_p \|V\|^\theta_2
 \end{align}
 since we obtain the other inequality   by replacing $V$ by $V^*$.
 Since the complex interpolation of non-commutative $L_p$-spaces
 is valid in the whole range $0<p<\infty $ (\cite{X2}), this can be deduced
 from the 3 line lemma.
Alternatively, this also follows from H\"older's inequality, together with \cite{Ko}. Indeed,
\begin{align*}
\|VF^{(1-\theta)(\frac1p-\frac12)}\|_q &= \||V|F^{(1-\theta)(\frac1p-\frac12)}\|_q = \|\; |V|^\theta |V|^{1-\theta} F^{(1-\theta)(\frac1p-\frac12)}\|_q\\
\intertext{and hence by H\"older (recall $\frac1q=\frac{1-\theta}p+\frac\theta2$)}
&\le \|V\|^\theta_2 \||V|^{1-\theta} F^{(\frac1p-\frac12)(1-\theta)}\|_{\frac{p}{1-\theta}}.
\end{align*}
But by \cite{Ko} (see also \cite{Ar}) we have
\[
 \|\; |V|^{1-\theta} F^{(\frac1p-\frac12)(1-\theta)}\|_{\frac{p}{1-\theta}} \le \|\; |V| F^{\frac1p-\frac12}\|^{1-\theta}_p
\]
and hence we obtain \eqref{eq1.7+}. \end{proof}

\begin{lem}[\cite{JP}]\label{lem1.6} Let $Q_j$ $(j=1,\ldots,n)$ be mutually orthogonal projections
in $M$  and let $\lambda_j$ $(j=1,\ldots,n)$ be non-negative numbers.
\begin{itemize}
\item[{\rm (i)}] For any $1\le q\le \infty$ and any $x$ in $L_q(\tau)$
\[
\left\|\sum^{n}_{i,j=1} \frac{\lambda_i\vee \lambda_j}{\lambda_i+\lambda_j}Q_i xQ_j\right\|_{L_q(\tau)} \le  \frac32\|x\|_{L_q(\tau)}
\quad {\rm and}\quad
\left\|\sum^{n}_{i,j=1} \frac{\lambda_i\wedge \lambda_j}{\lambda_i+\lambda_j}Q_i xQ_j\right\|_{L_q(\tau)} \le \frac12\|x\|_{L_q(\tau)}.
\]
\item[{\rm (ii)}]  
For any $1<q<\infty$ there is a constant $t(q)$, depending only on $q$,
such that for any $x$ in $L_q(\tau)$
   \[
\left\|\sum^{n}_{i,j=1} \frac{\lambda_i}{\lambda_i+\lambda_j}Q_i xQ_j\right\|_{L_q(\tau)} \le t(q)\|x\|_{L_q(\tau)} \quad {\rm and}\quad
\left\|\sum^{n}_{i,j=1} \frac{\lambda_j}{\lambda_i+\lambda_j}Q_i xQ_j\right\|_{L_q(\tau)} \le t(q)\|x\|_{L_q(\tau)}.
\]
\item[{\rm (iii)}]  For any $s$ with $1<q<s\le \infty$,  any density $f\in \cl D$ and any $x\in L_s(\tau)$, we have
$$ \max\{ \| f^{\frac 1q-\frac 1s} x\|_q, \| x f^{\frac 1q-\frac 1s} \|_q  \} \le t(q)\|f^{\frac 1q-\frac 1s}x+xf^{\frac 1q-\frac 1s}\|_{L_q(\tau)}.
$$
\end{itemize}
\end{lem}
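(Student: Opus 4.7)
The plan is to treat (i) directly, using that $\mu_{ij}:=\frac{\lambda_i\wedge\lambda_j}{\lambda_i+\lambda_j}$ is positive definite, to deduce (ii) from the classical Macaev/Gohberg--Krein boundedness of the triangular projection (which is the content of the result of \cite{JP} invoked here), and to reduce (iii) to (ii) by spectral decomposition of $f$.

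For (i), the kernel $\mu_{ij}$ is the Schur (Hadamard) product of the two elementary positive definite kernels
$$\lambda_i\wedge\lambda_j=\int_0^\infty \mathbf{1}_{\{t<\lambda_i\}}\mathbf{1}_{\{t<\lambda_j\}}\,dt,\qquad \frac{1}{\lambda_i+\lambda_j}=\int_0^\infty e^{-s\lambda_i}e^{-s\lambda_j}\,ds,$$
and hence is itself positive definite. Setting $h_{s,t}(\lambda)=\mathbf{1}_{\{t<\lambda\}}e^{-s\lambda}$ and $A_{s,t}=\sum_i h_{s,t}(\lambda_i)Q_i\in M$, one obtains
$$T_\mu(x):=\sum_{i,j}\mu_{ij}\,Q_ixQ_j=\iint_{[0,\infty)^2}A_{s,t}\,x\,A_{s,t}\,ds\,dt,$$
which exhibits $T_\mu$ as a completely positive map on $M$ with $T_\mu(1)=\sum_i\mu_{ii}Q_i=\tfrac12\sum_iQ_i\le\tfrac12\cdot 1$. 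Hence $\|T_\mu\|_{M\to M}=\|T_\mu(1)\|\le 1/2$. Since $\mu$ is symmetric, $T_\mu$ is self-adjoint for the trace pairing, so $\|T_\mu\|_{L_1\to L_1}=\|T_\mu\|_{L_\infty\to L_\infty}\le 1/2$, and complex interpolation gives $\|T_\mu\|_{L_q\to L_q}\le 1/2$ for every $1\le q\le\infty$. This is the second inequality of (i); the first follows from $\frac{\lambda_i\vee\lambda_j}{\lambda_i+\lambda_j}=1-\mu_{ij}$, so that the corresponding multiplier is $I-T_\mu$, of norm at most $3/2$.

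For (ii), the two multipliers sum to $1$ and are therefore interchangeable. The decomposition $\frac{\lambda_i}{\lambda_i+\lambda_j}=\tfrac12+\tfrac12\Psi_{ij}$ with $\Psi_{ij}=\frac{\lambda_i-\lambda_j}{\lambda_i+\lambda_j}=\tanh\bigl(\tfrac12\log(\lambda_i/\lambda_j)\bigr)$ reduces matters to bounding $T_\Psi$. Since $\Psi_{ij}$ depends only on $\log\lambda_i-\log\lambda_j$, the multiplier $T_\Psi$ is, via the isometric action $x\mapsto\Lambda^{it}x\Lambda^{-it}$ of $\mathbb{R}$ on $L_q(\tau)$ (with $\Lambda=\sum_i\lambda_iQ_i$), a Hilbert-transform-type Fourier multiplier whose $L_q$-boundedness for $1<q<\infty$ is classical. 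Equivalently, dyadic partitioning of the spectrum of $\Lambda$ reduces the bound to the Macaev/Gohberg--Krein theorem on the triangular projection. The detailed implementation is the one carried out in \cite{JP}; this step is the main obstacle, being the only one requiring the exclusion of both endpoints $q=1,\infty$.

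For (iii), first assume $f$ has finite spectrum and write $f=\sum_i\gamma_iQ_i$; set $\lambda_i=\gamma_i^{1/q-1/s}$, so that $f^{1/q-1/s}=\sum_i\lambda_iQ_i$. Putting $z:=f^{1/q-1/s}x+xf^{1/q-1/s}$ gives $Q_izQ_j=(\lambda_i+\lambda_j)Q_ixQ_j$, hence
$$f^{1/q-1/s}x=\sum_{i,j}\frac{\lambda_i}{\lambda_i+\lambda_j}Q_izQ_j,\qquad xf^{1/q-1/s}=\sum_{i,j}\frac{\lambda_j}{\lambda_i+\lambda_j}Q_izQ_j,$$
and (ii) immediately yields the two desired bounds. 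The general case is obtained by approximating the spectral resolution of $f$ by simple densities with finite spectrum and passing to the limit.
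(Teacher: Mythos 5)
Your part (i) is correct and more explicit than the paper's sketch (which simply cites \cite{JP}): the factorization $\mu_{ij}=\iint h_{s,t}(\lambda_i)h_{s,t}(\lambda_j)\,ds\,dt$, complete positivity with $T_\mu(1)=\tfrac12\sum Q_i$, self-adjointness for the trace pairing, and Riesz--Thorin interpolation give a clean self-contained account of the constants $\tfrac12$ and $\tfrac32=1+\tfrac12$. Your (iii) also coincides with the paper's reduction, spelled out. For (ii), though, you gesture at a transference/Hilbert-transform picture and then defer the implementation to \cite{JP}; the paper's argument is shorter and you should adopt it: reorder so that $\lambda_1\le\cdots\le\lambda_n$ and split $x=x^++x^-$ into upper/lower triangular parts (bounded on $L_q$ for $1<q<\infty$ by Matsaev). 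For $i\le j$ one has $\lambda_i\wedge\lambda_j=\lambda_i$, so $\frac{\lambda_i}{\lambda_i+\lambda_j}$ agrees with the $\wedge$-multiplier on $x^+$, and for $i>j$ with the $\vee$-multiplier on $x^-$; thus (ii) follows from (i) plus the triangular projection, with $t(q)$ controlled by twice that projection's norm, and no dyadic or transference machinery is needed.
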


\begin{proof}
This  was used in \cite{JP} (see also \cite{HK,HK2} for related facts). For the convenience of the reader we sketch the argument. We may easily reduce to the case $\sum Q_j=1$.\\
(i) expresses  the fact  that \[
 \frac{\lambda_i\vee \lambda_j}{\lambda_i+\lambda_j}\quad \text{and}\quad \frac{\lambda_i\wedge \lambda_j}{\lambda_i+\lambda_j}
\]
are (completely) contractive Schur multipliers on $L_q(M_n)$ for any $1\le q\le\infty$ (see \cite{JP}).\\
 (ii):  Using a permutation of the $(Q_j)$ we may assume $0\le \lambda_1\le \lambda_2\le ...\le\lambda_n$.
 By the boundedness of the triangular projection when $1<q<\infty$
 (see the seminal paper \cite{Mat} and \cite[\S 8]{PX} for more references to the literature),  it suffices to check (ii) when $x$ is either upper or lower triangular with respect to the decomposition $(Q_j)$. More precisely it suffices to check this when either 
$x=  x^+$ or $x=  x^-$ where $x^+=\sum _{i\le j }Q_i xQ_j$
and $x^-=\sum_{i> j }Q_i xQ_j$.
But since $\lambda_i\vee \lambda_j = \lambda_j$  and $\lambda_i\wedge \lambda_j = \lambda_i$ if $i\le j$, the case when $x$ is upper  triangular (i.e. $x= x^+ $) follows from the first part. The lower
triangular case  (i.e. $x= x^- $) is similar.\\
 (iii):  By density we may reduce this to the case when $f$ has finite spectrum, so that
 $f=\sum \lambda_j Q_j$.
 Then (iii) essentially reduces to   (ii).
\end{proof}

\begin{proof}[Proof of Theorem \ref{thm1.4}]
We choose $q>1$ with $0<p<q<2$. We use the same notation as for Theorem \ref{thm1.1}. By the observations made before Theorem \ref{thm1.4}, it suffices to verify \eqref{eq1.6}. By Lemma \ref{lem1.5} applied with $V=T$ and $V=T^*$ we have
\begin{align*}
 \|F^{\frac{1-\theta}r}T\|_q &\le \|F^{\frac1r}T\|^{1-\theta}_p \|T\|^\theta_2,\\
\|TF^{\frac{1-\theta}r}\|_q &\le \|TF^{\frac1r}\|^{1-\theta}_p \|T\|^\theta_2.
\end{align*}
Let $\lambda_i = f^{\frac\theta{r}}_i$. By Lemma \ref{lem1.6} we have
\begin{equation}\label{eq1.12}
\|[(f^{\frac\theta{r}}_i + f^{\frac\theta{r}}_j)^{-1} f^{\frac1r}_iT_{ij}]\|_q \le t(q) \|F^{\frac{1-\theta}r}T\|_q
\end{equation}
and similarly
\begin{equation}\label{eq1.13}
 \|[(f^{\frac\theta{r}}_i + f^{\frac\theta{r}}_j)^{-1} T_{ij} f^{\frac1r}_j\|_q \le t(q)\|TF^{\frac{1-\theta}r}\|_q.
\end{equation}
Note that we have
\[
 T(\theta) = (f^{\frac\theta{r}}_i + f^{\frac\theta{r}}_j)^{-1} (f^{\frac1r}_i + f^{\frac1r}_j)T_{ij}  \]
Therefore by the triangle inequality  and \eqref{eq1.12}, \eqref{eq1.13}
\[
 \|T(\theta)\|_q \le t(q)(\|F^{\frac1r}T\|^{1-\theta}_p + \|TF^{\frac1r}\|^{1-\theta}_p)\|T\|^\theta_2
\]
and hence by condition $(\gamma',\gamma'')$ 
\begin{align*}
 \|T(\theta)\|_q &\le t(q)((\gamma'')^{1-\theta}+(\gamma''')^{1-\theta}) \|(F^{\frac1r}T +TF^{\frac1r})/2\|^{1-\theta}_p \|T\|^\theta_2\\
&\le t(q)((\gamma'')^{1-\theta}+(\gamma''')^{1-\theta}) \|S\|^{1-\theta}_p \|T\|^\theta_2.
\end{align*}
Therefore we obtain \eqref{eq1.6}. By condition $(\gamma',\gamma'')$ we have $\|T(1)\|_2=\|T\|_2 \le \gamma'C_2(x)$, and also $C_q(x) \le \|T(\theta)\|_q$ so we conclude that Step \ref{stp3} holds.
\end{proof}

\begin{rem}   Theorem \ref{thm1.1} implies as a special case the
following fact possibly of independent interest:  if for some $0<q<2$ we have
\[
(C_q)\quad \left\{
\begin{array}{l}
 \exists C\ \forall a_k\in L_p(\tau)\\
\left\|\left(\sum a^*_ka_k\right)^{1/2}\right\|_{L_q(\tau)} \le C\left\|\sum \xi_k\otimes a_k\right\|_{L_q(\varphi\times\tau)}
\end{array}\right.
\]
then $(C_p)$ holds (for a different constant $C$) for all $p$ with $0<p<q$. In this case Step \ref{stp3} is easy to verify (only right multiplication appears in this case).

\end{rem}  

\begin{rem}
If $0<p\le 2$, the converse inequality to $(K_p)$ is valid assuming that
$\varphi(1)=1$ and $\xi_k\in L_2(N,\varphi)$ is orthonormal or satisfies \eqref{eq1.5}.

Indeed, for any $t\ge 0$ in $N\otimes M$ since $p/2\le 1$ and $\varphi(1)=1$, by the operator concavity of
$t\mapsto t^{p/2}$ (see \cite[p. 115-120]{Bh}) , 
 we have 
\[
 \|t\|_{p/2} \le \|{\bb E}^M(t)\|_{p/2}
\]
and hence, if $S = \sum \xi_k\otimes x_k$, we have
$$
 \|S\|_p = \|S^*S\|^{1/2}_{p/2}  \le \|{\bb E}^M(S^*S)\|^{1/2}_{p/2}\\
 \le \left\|\left(\sum x^*_kx_k\right)^{1/2}\right\|_p,
$$
and similarly
\[
 \|S\|_p \le \left\|\left(\sum x_kx^*_k\right)^{1/2}\right\|_p.
\]
From this we easily deduce\[
 \|S\|_p \le c(p)|||x|||_p
\]
where $c(p) = 1$ if $1\le p\le 2$ and $c(p) = 2^{\frac1p-1}$ if $0<p\le 1$.
The preceding remark shows that the assumption that $\varphi$ is finite cannot be removed.
\end{rem}

\begin{rem} To extend Theorem \ref{thm1.1} to the case $0<p<1$ the difficulty lies in Step \ref{stp3}, or in proving
a certain form of H\"older inequality such as  \eqref{eq1.6}. Note that a much \emph{weaker} estimate allows to conclude:\\
It suffices to show that there is a function $\vp\to\delta(\vp)$ tending to zero with $\vp>0$ such that when $f\in {\cl D}$ we have $(\alpha=\frac1p - \frac12=\frac1r)$ $(1<q<2)$:
$$    [\|x\|_2 \le 1, \|f^\alpha x + xf^\alpha\|_p\le \vp]\Rightarrow
 \|f^{\alpha(1-\theta)} x + xf^{\alpha(1-\theta)}\|_q \le \delta(\vp).
$$
This might hold even if Step \ref{stp3} poses a problem.
\end{rem}

In the case $2\le q<\infty$, the formulation of $(K_q)$ must be changed. When $2<q<\infty$, and $x=(x_k)$ is a finite sequence in $L_q(\tau)$, we set
\[
 |||x|||_q \overset{\scriptstyle\text{def}}{=} \max\left\{\left\|\left(\sum x^*_k x_k\right)^{1/2} \right\|_q, \left\|\left(\sum x_kx^*_k\right)^{1/2}\right\|_q\right\}.
\]
We will then say (when $2<q<\infty$) that $(\xi_k)$ satisfies $(K_q)$ if there is a constant $\beta_q$ such that for any such $x=(x_k)$ we have
\[
 \left\|\sum \xi_k\otimes x_k\right\|_q \le \beta_q|||x|||_q.
\]
By \cite{LP1}, this holds when $(\xi_k)$ are the Rademacher functions on [0,1]. 

In operator space theory, all $L_q$-spaces, in particular the Schatten class $S_q$, can be equipped with a ``natural'' operator space structure (see \cite[\S 9.5 and 9.8]{P2}). Let $C_q$ (resp.\ $R_q$) denote the closed span of $\{e_{i1}\mid i\ge 1\}$ (resp.\ $\{e_{1j}\mid j\ge 1\}$) in $S_q$. We denote by $C_{q,n}$ and $R_{q,n}$ the corresponding $n$-dimensional subspaces. By definition, the ``sum space'' $R_q+C_q$ is the quotient space $(R_q\oplus C_q)/N$ where $N =\{(x,-{}^tx)\mid x\in R_q\}$. We define similarly $R_{q,n}+C_{q,n}$. The intersection $R_q\cap C_q$ is defined as the subspace $\{x,{}^tx\}$ in $R_q\oplus C_q$. Here the direct sums are meant (say) in the operator space sense, i.e.\ in the $\ell_\infty$-sense. Let us denote by $\text{Rad}(n,q)$ (resp.\ $\text{Rad}(q)$) the linear span of the first $n$ (resp.\ the sequence of all the) Rademacher functions in $L_q([0,1])$. The operator space structure induced on the space $\text{Rad}(q)$ is entirely described by the non-commutative Khintchine inequalities (see \cite[\S   9.8]{P2}):\ the space $\text{Rad}(q)$ is completely isomorphic to $R_q+C_q$ when $1\le q\le 2$ and to $R_q\cap C_q$ when $2\le q<\infty$. The case $0<q<1$ is open.

Note that $\text{Rad}(q,n)$ is an $n$-dimensional subspace of $\ell^{2^n}_q$, so  $(Kh_q)$ implies that $R_{q,n}+C_{q,n}$ uniformly embeds into $\ell^{2^n}_q$ for all $1\le q<2$. 
The next result improves significantly the dimension of the embedding.

 First recall that two operator spaces $E,F$ are called completely $c$-isomorphic if there is an isomorphism $u\colon \ E\to F$ such that $\|u\|_{cb}\|u^{-1}\|_{cb}\le c$. 
\begin{thm}\label{thm1.7}
 Let $1\le q<2$. For any $n$, there is a subspace of $\ell^n_q$ with dimension $k=[n^{1/2}]$ that is completely $c$-isomorphic to $R_{q,k} +C_{q,k}$ where $c$ is a constant depending only on $q$.
\end{thm}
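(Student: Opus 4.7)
The plan is to realize the desired subspace as the linear span of a $Z(2)$-set of characters on a suitable finite abelian group, combining Harcharras's theorem from \cite{Har} with the extrapolation provided by Theorem \ref{thm1.1}. First, I would choose a finite abelian group $G$ with $|G|\le n$ together with a $Z(2)$-subset $\Lambda=\{\xi_1,\ldots,\xi_k\}\subset\widehat{G}$ of cardinality $k=[n^{1/2}]$. Such a pair $(G,\Lambda)$ is supplied by classical Sidon-type constructions; for instance, for a prime $p$ with $p^2\sim n$, one may take $G=\mathbb{F}_p\times\mathbb{F}_p$ and let $\Lambda$ correspond to $\{(x,x^2):x\in\mathbb{F}_p\}$ (truncating and invoking Bertrand's postulate to match $k=[n^{1/2}]$ up to a constant absorbed into $c$). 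Since $L_q(G)$ with its natural operator space structure is completely isometric to $\ell^{|G|}_q$, which embeds completely isometrically into $\ell^n_q$, it suffices to show that $E_q=\mathrm{span}(\Lambda)\subset L_q(G)$ is completely $c(q)$-isomorphic to $R_{q,k}+C_{q,k}$.

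The next step is to invoke Harcharras's theorem: for every $p\in(2,\infty)$, the $Z(2)$-set $\Lambda$ is completely $\Lambda(p)$, so $E_p\subset L_p(G)$ is completely $\beta_p$-isomorphic to $R_{p,k}\cap C_{p,k}$, with $\beta_p$ depending only on $p$ (not on $n$ or $G$). Operator space duality---using that $L_p(G)^*=L_{p'}(G)$ completely isometrically, that $(R_{p,k}\cap C_{p,k})^*=R_{p',k}+C_{p',k}$, and that $E_p^*=L_{p'}(G)/E_p^\perp$ is canonically identified with $E_{p'}$ via the Fourier transform---then yields that $E_{p'}\subset L_{p'}(G)$ is completely isomorphic to $R_{p',k}+C_{p',k}$ for every $p'\in(1,2)$, with constant depending only on $p'$. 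Equivalently, $(K_{p'})$ holds for $\Lambda$ with a constant depending only on $p'$.

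For the given $q\in[1,2)$ of interest, I would pick any $p'\in(q,2)$. Since $(\xi_k)$ is orthonormal in $L_2(G)$ and satisfies $(K_{p'})$, Theorem \ref{thm1.1} yields $(K_q)$ for $\Lambda$ with a constant depending only on $q$, $p'$, and $\beta_{p'}$---and thus, by choosing $p'$ as a fixed function of $q$, only on $q$. Combined with the easy converse inequality (which is automatic for orthonormal systems), this shows $E_q\subset L_q(G)$ is completely $c(q)$-isomorphic to $R_{q,k}+C_{q,k}$; embedding $L_q(G)\hookrightarrow\ell^n_q$ then produces the desired subspace of dimension $k=[n^{1/2}]$.

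The main point of care is to track that the complete isomorphism---and not merely a Banach-space isomorphism---is preserved at every step. Harcharras's result must be read as an operator space statement; the duality $(R_p\cap C_p)^*=R_{p'}+C_{p'}$ must be applied in the operator space category; and Theorem \ref{thm1.1}, as formulated for arbitrary semi-finite $(M,\tau)$, applies in particular with $M=M_N$ for each $N$, which is exactly what encodes the cb-norm. Once these are verified, the rest is bookkeeping of the constants $\beta_p\to\beta_{p'}\to\beta_q$.
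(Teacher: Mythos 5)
Your argument follows essentially the same route as the paper: take a $Z(2)$-set of characters of size $\sim n^{1/2}$ on a finite abelian group of order $\sim n$, invoke Harcharras to get $(K_q)$ for some $q\in(2,4]$, dualize to $(K_{q'})$ with $q'\in[4/3,2)$, and extrapolate down via Theorem~\ref{thm1.1}. One small inaccuracy: a $Z(2)$-set is completely $\Lambda(p)$ only for $p\le 4$ (larger $p$ requires $Z(s)$ with larger $s$), so your ``for every $p\in(2,\infty)$'' is an overreach; but this is immaterial, since $p=4$ (dually $p'=4/3$) is all that is needed to start the extrapolation, and any $p'\in[4/3,2)$ dominates every $q\in[1,2)$ via Theorem~\ref{thm1.1}.
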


\begin{proof}
 By \cite{Har}, we know that there is a subset of $[1,e^{it},\ldots, e^{int}]$ with cardinality $k=[n^{1/2}]$ such that the corresponding set $\{\xi_1,\ldots, \xi_k\}$ satisfies $(K_q)$ for all $q$ such that $2\le q\le 4$ and hence by duality for all $q$ such that $4/3 \le q\le 2$. By Theorem \ref{thm1.1}, the same set satisfies $(K_p)$ for all $1\le p\le 2$ (with a constant $\beta_p$ independent of $n$).
\end{proof}

\begin{prb}
 What is the correct estimate of $k$ in Theorem \ref{thm1.7}? In particular is it true for $k\sim [n^\alpha]$ with $\alpha$ any number in (0,1) instead of $\alpha=\frac12$? Is it true for $k\sim [\delta n]$ $(0<\delta<1)$?
\end{prb}
The case $q=1$ is particularly interesting.
It is natural to expect a positive answer with $k$ proportional to $n$ by analogy with the Banach space case (see \cite{FLM}). One could even dream of an operator
space version of the Kashin decomposition (cf. \cite{Sz}) !
Another bold conjecture would be     the operator space generalization
of Schechtman's 
and Bourgain, Lindenstrauss and Milman's results, as refined by   Talagrand in  \cite{Ta}:
\begin{prb} Let $E$ be an $n$-dimensional operator subspace of $S_1$.
Assume that for some $p>1$ $E$ embeds $c$-completely isomorphically   into $S_p$. Is it true that $E$ can then be embedded $c'$-completely isomorphically
(the constant $c'$ being a function of $p$ and $c$) into $S_1^{2n}$ ?
\end{prb}

\section{Conditional expectation variant}\label{sec2}

Again we consider $(\xi_k)$ in $L_2(N,\varphi)$ and coefficients $(x_k)$ in $L_2(M,\tau)$, but, in addition, we give ourselves a von~Neumann subalgebra ${\cl M}\subset M$ such that $\varphi_{|{\cal M}}$ is semi-finite and we denote by $E\colon \ M\to {\cl M}$ the conditional expectation with respect to ${\cl M}$. Recall that $E$ extends to a contractive projection (still denoted abusively by $E$) from $L_q(M,\tau)$ onto $L_q({\cl M},\tau)$ for all $1\le q\le \infty$.

Consider $x=(x_k)$ with $x_k\in L_q(\tau)$. In this section, we define
\[
|||x|||_{q,{\cl M}} = \inf_{x_k=a_k+b_k}\left\{\left\|\left(E \sum a^*_ka_k\right)^{1/2}\right\|_q + \left\|\left( E \sum a_ka^*_k\right)^{1/2}\right\|_q\right\}.
\]
We then define again
\[
 C_{q,{\cl M}} (x)= \inf\left\{\left\|\sum \xi_k \otimes y_k\right\|_q\right\}
\]
where the infimum runs over all $y_k$ in $L_q(\tau)$ such that there is $f$ in ${\cl D}\cap L_1({\cl M})$ such that $x_k = (f^{1/r}y_k + y_kf^{1/r})/2$.

Then the proof described in \S \ref{sec1} extends with no change to this situation and shows that if there is $\beta_q({\cl M})$ such that for all finite sequences $x=(x_k)$ in $L_q(\tau)$ we have
\begin{equation}\label{eq2.1}
 |||x|||_{q,{\cl M}} \le \beta_q({\cl M}) \left\|\sum \xi_k\otimes x_k\right\|_q
\end{equation}
then for any $p$ with $1\le p<q$ there is a constant $\beta_p({\cl M})$  such that \eqref{eq2.1} holds for any $x=(x_k)$ in $L_p(\tau)$ when $q$ is replaced by $p$. The main new case we have in mind is the case when $L_q(M,\tau) = S_q$ (Schatten class) and ${\cl M}$ is the subalgebra of \emph{diagonal} operators (so that $L_q({\cl M})\simeq \ell_q)$ on $\ell_2$.

Thus we state for future reference:

\begin{thm}\label{thm2.1}
 Both Theorem \ref{thm1.1} and Theorem \ref{thm1.4} remain valid when $|||\cdot|||_q$ is replaced by $|||\cdot|||_{q,{\cl M}}$.
\end{thm}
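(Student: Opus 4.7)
The plan is to reread the three-step proofs of Theorems \ref{thm1.1} and \ref{thm1.4} in parallel and verify that each step carries over verbatim once we insist that every auxiliary density $f\in{\cl D}$ lies in fact in ${\cl D}\cap L_1({\cl M})$. Two facts make every manipulation safe: the module property $E(\alpha X \beta)=\alpha E(X)\beta$ for $\alpha,\beta\in{\cl M}$, and the trace-preserving identity $\tau\circ E=\tau$, which together let one freely interchange $a_k^*a_k$ with $E(a_k^*a_k)$ whenever such expressions are sandwiched between elements of ${\cl M}$.

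In Step \ref{stp1}, after applying the conditional form of $(K_q)$ to $y=(y_k)$, the densities $g,h$ that allow one to write $a_k=\alpha_k g^{1/q-1/2}$ and $b_k=h^{1/q-1/2}\beta_k$ arise as normalizations of $(E(\sum a_k^*a_k))^{1/2}$ and $(E(\sum b_kb_k^*))^{1/2}$; these operators lie in ${\cl M}$, so $g,h\in{\cl M}$ automatically, and the bounds $(\sum\|\alpha_k\|_2^2)^{1/2}+(\sum\|\beta_k\|_2^2)^{1/2}\le \beta_q({\cl M})C_{q,{\cl M}}(x)(1+\vp)$ fall out of the module/trace identity. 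Because $f,g,h\in{\cl M}$, when one estimates $|||x'|||_{p,{\cl M}}$ and $|||x''|||_{p,{\cl M}}$ the ${\cl M}$-factors pull outside the outer $E$, and what remains is exactly the Hölder computation used in the unconditional proof; the Cauchy-formula argument for $x''$ also transfers, since the unitaries $U(it),V(it)$ are bounded Borel functions of $f,g\in{\cl M}$.

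For Step \ref{stp2}, one forms $f'=(\vp f_0^{2/p}+E(\sum a_k^*a_k)+E(\sum b_kb_k^*))^{1/2}$, which sits in ${\cl M}$ once $f_0\in{\cl D}\cap L_1({\cl M})$ is chosen (available by semi-finiteness of $\varphi_{|{\cl M}}$). The estimate $\sum\|\alpha_k\|_2^2\le\|f'\|_p^2$ then uses $E(\sum a_k^*a_k)\le(f')^2$ together with the same module/trace sandwiching; the ensuing duality argument producing the symmetric $(y_k)$ never references ${\cl M}$, so it is untouched. Step \ref{stp3}, in both the Theorem \ref{thm1.1} route (via \cite{JP}) and the Theorem \ref{thm1.4} route (via Lemmas \ref{lem1.5}--\ref{lem1.6}), operates on a fixed density $f$ and applies interpolation or Schur-multiplier estimates inside $L_q(\tau)$; it does not see ${\cl M}$ at all. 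The final semi-finite approximation from the proof of Theorem \ref{thm1.1} carries over as well, the approximating density being chosen inside $L_1({\cl M})$, again by semi-finiteness of $\varphi_{|{\cl M}}$. The only point demanding any genuine verification is the automatic membership in ${\cl M}$ of the densities constructed in Step \ref{stp1}; this is the sole (mild) obstacle, and once it is settled the rest of the proof is bookkeeping.
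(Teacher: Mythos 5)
Your verification is correct and follows the same route as the paper's own (which is considerably terser: it simply notes the module identity $E(axb)=aE(x)b$ and locates its use in Step \ref{stp2}, leaving the rest to the reader). The extra detail you supply — in particular that the module/trace identity is also what makes the densities $g,h$ of Step \ref{stp1} land in $L_1({\cl M})$ and lets the factorization estimates go through, and that Step \ref{stp3} is untouched because it never invokes $E$ — is exactly the bookkeeping the paper is alluding to.
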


\begin{proof}
 The verification of this assertion is straightforward. Note that the conditional expectation $E$ satisfies $E(ax b) = aE(x)b$ whenever $a,b$ are in ${\cl M}$ and $x$ in $L_q(M,\tau)$. This is used to verify Step \ref{stp2}. The proof of the other steps require no significant change.
\end{proof}

 Let $L_q(M,\tau) = S_q$ (Schatten $q$-class) and let ${\cl M}\subset B(\ell_2)$ be the subalgebra of diagonal operators with conditional expectation denoted by $E$. Consider a family $x=(x_{ij})$ with $x_{ij}\in S_q$.
We will denote by $(x_{ij})_{k\ell}$ the entries of each matrix $x_{ij}\in S_q$, and we set
$$
 \hat x_{ij} = (x_{ij})_{ij} e_{ij} \quad 
 {\rm and}\quad 
\hat x  = (\hat x_{ij}).$$
Let us denote
\[
 \|x||_{R_q} \overset{\rm def}{=} \left\|\left( \sum_{ij} x_{ij}x^*_{ij}\right)^{1/2}\right\|_q\quad \text{and}\quad \|x\|_{C_q} \overset{\rm  def}{=} \left\|\left(\sum_{ij} x^*_{ij}x_{ij}\right)^{1/2}\right\|_q.
\]
\begin{lem} For any $q\ge 1$ we have
\begin{align}\label{eq2.2}
 \|\hat x\|_{R_q} &\le \|x\|_{R_q}\quad \text{and}\quad \|\hat x\|_{C_q} \le \|x\|_{C_q},\\
\intertext{and consequently}
\label{eq2.3}
|||\hat x|||_q &\le |||x|||_q.
\end{align}
\end{lem}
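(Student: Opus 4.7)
My plan is to realize $\hat x$ as a pinching (an average over self-adjoint unitaries) of a column restriction of $x$, so that the row/column inequalities of \eqref{eq2.2} reduce to the Schatten ideal property plus the triangle inequality in $S_q$. Since $\|\cdot\|_{S_q}$ is a genuine norm precisely for $q\ge 1$, the hypothesis will be used in exactly this one place.

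Let $H=\ell_2$ and form the row operator $X\colon \ell_2(I\times J)\otimes H\to H$ defined by $X(e_{(i,j)}\otimes\xi)=x_{ij}\xi$, so that $XX^*=\sum_{ij}x_{ij}x_{ij}^*$ and hence $\|X\|_{S_q}=\|x\|_{R_q}$. The isometry $V\colon e_{(i,j)}\mapsto e_{(i,j)}\otimes e_j$ gives the column restriction $\tilde X:=XV$, with $\tilde Xe_{(i,j)}=x_{ij}e_j$; the Schatten ideal property yields $\|\tilde X\|_{S_q}\le\|X\|_{S_q}$ for every $q>0$.

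Now let $P_i:=e_{ii}\in B(H)$ and let $Q_i\in B(\ell_2(I\times J))$ be the projection onto $\mathrm{span}\{e_{(i,j)}:j\in J\}$. Both families are mutually orthogonal and sum to the identity on their respective Hilbert spaces. Set $\hat X':=\sum_iP_i\tilde XQ_i$. A direct computation gives $\hat X' e_{(i,j)}=(x_{ij})_{ij}\,e_i$, whence
\[
 \hat X'(\hat X')^* \;=\; \sum_{ij}|(x_{ij})_{ij}|^2\,e_{ii}\;=\;\sum_{ij}\hat x_{ij}\hat x_{ij}^*,
\]
so $\|\hat X'\|_{S_q}=\|\hat x\|_{R_q}$.

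The heart of the argument is the pinching step. For i.i.d.\ random signs $(\varepsilon_i)$, the operators $U_\varepsilon:=\sum_i\varepsilon_iP_i$ and $W_\varepsilon:=\sum_i\varepsilon_iQ_i$ are self-adjoint unitaries (by orthogonality of the $P_i$'s and $Q_i$'s and the fact that their sums equal the identity), and $\mathbb{E}[\varepsilon_i\varepsilon_j]=\delta_{ij}$ yields $\mathbb{E}[U_\varepsilon\tilde XW_\varepsilon]=\hat X'$. For $q\ge 1$, Jensen's inequality applied to the norm $\|\cdot\|_{S_q}$ gives
\[
 \|\hat X'\|_{S_q}\;\le\;\mathbb{E}\|U_\varepsilon\tilde XW_\varepsilon\|_{S_q}\;=\;\|\tilde X\|_{S_q}.
\]
Chaining the three inequalities proves $\|\hat x\|_{R_q}\le\|x\|_{R_q}$. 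The $C_q$ inequality follows by the symmetric construction (or by applying the row case to the family $(x_{ji}^*)$, since hatting commutes with adjunction up to swapping $i\leftrightarrow j$). Finally \eqref{eq2.3} is immediate: any decomposition $x_{ij}=a_{ij}+b_{ij}$ yields $\hat x_{ij}=\hat a_{ij}+\hat b_{ij}$ by linearity of hatting, so $|||\hat x|||_q\le\|\hat a\|_{C_q}+\|\hat b\|_{R_q}\le\|a\|_{C_q}+\|b\|_{R_q}$, and one passes to the infimum. I do not foresee any real obstacle; the subtle point is the use of $q\ge 1$ in the averaging step, which is exactly why the lemma is restricted to this range.
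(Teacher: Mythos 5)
Your argument is correct, and it proves exactly what is claimed with $q\ge 1$ entering only at the pinching (Jensen) step, as you note. The paper gets there by a shorter route: it writes down the single integral identity
\[
\hat x_{ij}=\int \overline{z'_i}\,\overline{z''_j}\,D(z')\,x_{ij}\,D(z'')\,dm(z')\,dm(z''),
\]
where $D(z')$, $D(z'')$ are the diagonal unitaries with torus entries $z'=(z'_i)$, $z''=(z''_j)$ and $m$ is normalized Haar measure on $\mathbb T^{\mathbb N}$. Since the unimodular scalars cancel in $\sum_{ij}u_{ij}u_{ij}^*$ and $\sum_{ij}u_{ij}^*u_{ij}$, each integrand family has exactly the same $R_q$- and $C_q$-norms as $x$, and convexity of those norms (this is where $q\ge 1$ is used) gives \eqref{eq2.2} in one stroke; \eqref{eq2.3} then follows as in your last paragraph. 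Your version replaces the continuous toral average by a two-step procedure: first a column restriction $\tilde X=XV$ handled by the Schatten ideal property (valid for all $q>0$, so no hypothesis is consumed there), then a discrete $\pm1$-pinching $\hat X'=\mathbb E[U_\varepsilon \tilde X W_\varepsilon]$, where convexity of $\|\cdot\|_{S_q}$ is used. The net effect is the same, and your presentation has the minor pedagogical advantage of isolating exactly the single place where $q\ge 1$ is needed, but it is somewhat longer: the paper's identity performs the row and column pinches simultaneously and directly at the level of the family $(x_{ij})$, avoiding the detour through the auxiliary operators $X$, $V$, $\tilde X$ on the enlarged Hilbert space.
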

\begin{proof}
Indeed, this follows from the convexity of the norms involved and the identity
\[
 \hat x_{ij} = \int\overline{ z'_i} \overline{ z''_j} D(z') x_{ij}D(z'')\ dm(z') dm(z'')
\]
where $z' = (z'_i)$, $z'' = (z''_j)$ denote elements of ${\bb T}^{\bb N}$ equipped with its normalized Haar measure $m$. 
\end{proof}

 Now consider $\lambda_{ij}\in {\bb C}$.
 We define \begin{equation}\label{lambda}
 [\lambda]_p  = \inf\left\{\left(\sum_i\left(\sum_j |a_{ij}|^2\right)^{p/2} + \sum_j \left(\sum_i |b_{ij}|^2\right)^{p/2}\right)^{1/p}\right\}
\end {equation}

where the inf runs over all possible decompositions $\lambda_{ij} = a_{ij}+b_{ij}$.
 
 \begin{lem}\label{lam}
  Let $\hat x_{ij} = \lambda_{ij}e_{ij}$ (i.e. $\lambda_{ij} = (x_{ij})_{ij}$). We have
\[
 \|\hat x\|_{C_q} =  \left(\sum\nolimits_j \left(\sum\nolimits_i |\lambda_{ij}|^2\right)^{q/2}\right)^{1/q} \quad \text{and}\quad \|\hat x\|_{R_q} = \left(\sum\nolimits_i \left(\sum\nolimits_j |\lambda_{ij}|^2\right)^{q/2} \right)^{1/q}.
\]Moreover, for any $0<q<\infty$
\[
|||\hat x|||_{q,{\cl M}}= [\lambda]_q \le |||x|||_{q,{\cl M}}.\]
  where ${\cl M}$ is the subalgebra of \emph{diagonal} operators   on $\ell_2$.

\end{lem}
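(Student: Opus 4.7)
\bigskip

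\noindent\emph{Proof plan.} The first two identities are a direct calculation. Since $\hat x_{ij}=\lambda_{ij}e_{ij}$ is rank one we have
\[
\hat x_{ij}^*\hat x_{ij}=|\lambda_{ij}|^2e_{jj},\qquad \hat x_{ij}\hat x_{ij}^*=|\lambda_{ij}|^2e_{ii}.
\]
Summing over $(i,j)$ yields a diagonal operator whose $(j,j)$-entry is $\sum_i|\lambda_{ij}|^2$ (for the column case) and whose $(i,i)$-entry is $\sum_j|\lambda_{ij}|^2$ (for the row case). Taking the square root is entry-wise on a positive diagonal, so the Schatten $q$-norm reduces to an $\ell_q$-norm, giving the claimed formulas for $\|\hat x\|_{C_q}$ and $\|\hat x\|_{R_q}$.

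The heart of the proof is the reduction showing that arbitrary decompositions can be replaced by entry-supported ones without increase. The key monotonicity is
\begin{equation}\label{eq:monCE}
\bigl\|(E\textstyle\sum_{ij}\hat a_{ij}^*\hat a_{ij})^{1/2}\bigr\|_q\le\bigl\|(E\textstyle\sum_{ij} a_{ij}^*a_{ij})^{1/2}\bigr\|_q,
\end{equation}
and its row analogue. To prove \eqref{eq:monCE} I would write $\hat a_{ij}=e_{ii}a_{ij}e_{jj}$ and estimate
\[
\hat a_{ij}^*\hat a_{ij}=e_{jj}\bigl(a_{ij}^*e_{ii}a_{ij}\bigr)e_{jj}\le e_{jj}\bigl(a_{ij}^*a_{ij}\bigr)e_{jj}.
\]
Applying $E$ (which satisfies $E(e_{jj}\cdot e_{jj})=e_{jj}E(\cdot)e_{jj}$ since $e_{jj}\in\mathcal M$) and summing over $(i,j)$ gives
\[
E\bigl(\textstyle\sum_{ij}\hat a_{ij}^*\hat a_{ij}\bigr)\le\sum_j e_{jj}D_j e_{jj},\qquad D_j:=E\bigl(\textstyle\sum_i a_{ij}^*a_{ij}\bigr).
\]
Both sides are positive diagonals, and the $(j,j)$-entry of the right side is one summand of the $(j,j)$-entry of $D=E(\sum_{ij}a_{ij}^*a_{ij})=\sum_{j'}D_{j'}$, so $\sum_j e_{jj}D_je_{jj}\le D$ entry-wise. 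Since the map $t\mapsto t^{q/2}$ is monotone on $[0,\infty)$, the corresponding Schatten $q$-norms of the square roots are ordered for every $0<q<\infty$, which yields \eqref{eq:monCE}. (One could alternatively deduce this by writing the hat operation as an integral against diagonal unitaries and arguing as in the previous lemma.)

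Granted \eqref{eq:monCE}, the equality $|||\hat x|||_{q,\mathcal M}=[\lambda]_q$ splits into two halves. For $\le$, any scalar decomposition $\lambda_{ij}=\alpha_{ij}+\beta_{ij}$ lifts to $\hat x_{ij}=\alpha_{ij}e_{ij}+\beta_{ij}e_{ij}$; the first part of the lemma identifies the two terms of $|||\cdot|||_{q,\mathcal M}$ for this decomposition with exactly the two sums appearing in $[\lambda]_q$. For $\ge$, given any decomposition $\hat x_{ij}=a_{ij}+b_{ij}$, apply the hat, noting $\widehat{\hat x}=\hat x$, to obtain the entry-supported decomposition $\hat x_{ij}=\hat a_{ij}+\hat b_{ij}$ with $\hat a_{ij}=\alpha_{ij}e_{ij}$, $\hat b_{ij}=\beta_{ij}e_{ij}$, $\alpha_{ij}+\beta_{ij}=\lambda_{ij}$; by \eqref{eq:monCE} and its row analogue the two $|||\cdot|||$-terms only decrease, so the infimum is attained on entry-supported decompositions and equals $[\lambda]_q$. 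The final bound $[\lambda]_q\le|||x|||_{q,\mathcal M}$ follows by the same hat-reduction applied to an arbitrary decomposition of $x$ (not just of $\hat x$) combined with \eqref{eq:monCE}.

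The routine but delicate point is to keep the conventions for $|||\cdot|||_{q,\mathcal M}$ consistent with $[\lambda]_q$ when $0<q<1$ (reading the defining infimum as a $q$-quasi-norm, i.e.\ summing $q$-th powers inside and taking a $1/q$-root outside); once this is fixed, the proof goes through uniformly in $q$.
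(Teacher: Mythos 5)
Your argument is essentially the paper's argument in slightly more abstract dress. The paper also proves the key inequality $[\lambda]_q\le|||x|||_{q,\mathcal M}$ by computing the $\mathcal M$-conditioned column and row norms entry-wise: since $E$ projects onto the diagonal, $E\bigl(\sum_{ij}x_{ij}^*x_{ij}\bigr)$ is the diagonal matrix with $(k,k)$-entry $\sum_{ij\ell}|(x_{ij})_{\ell k}|^2$, and dropping all terms except $i=\ell$, $j=k$ gives exactly the quantity coming from $\hat x$. Your hat-contraction inequality \eqref{eq:monCE} is that same entry-wise domination, phrased as an operator inequality between positive diagonal matrices and then passed through the Schatten $q$-quasi-norm (which is indeed monotone on positives for all $q>0$, since $A\le B$ forces $s_n(A)\le s_n(B)$). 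The conceptual gain of the $\mathcal M$-conditioned version over the plain $\|\cdot\|_{R_q},\|\cdot\|_{C_q}$ version — which, as the paper's remark shows, genuinely fails for $q<1$ — is precisely that one lands on diagonals, so no convexity of the quasi-norm is needed. For that reason your parenthetical alternative (averaging against diagonal unitaries, as in the previous lemma) is a bit misleading: that Jensen-type argument is exactly the one that breaks down for $q<1$, and it is best not to invoke it. Finally, you are right to flag the convention mismatch for $q<1$: the paper defines $[\lambda]_q$ with a $q$-sum inside a $1/q$-root, while $|||\cdot|||_{q,\mathcal M}$ is written as an ordinary sum of two terms; for $q<1$ these only agree up to a factor $2^{1/q-1}$, so the stated ``$=$'' should either be read as ``equivalent with constants depending only on $q$'' or one should read the $|||\cdot|||_{q,\mathcal M}$-infimum with the $q$-quasi-norm convention, as you propose. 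Either reading suffices for the application (Theorem~\ref{thm3.1}), where only equivalence is used.
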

 \begin{proof} The first assertion is an immediate calculation.
 We now show  that for any $0<q<\infty$
\begin{equation}\label{eq2.5}
 [\lambda]_q \le |||x|||_{q,{\cl M}}.
\end{equation}
Indeed, let us denote
\[
 \|x\|_{C_q,{\cl M}} = \left\|\left(E\sum_{ij}x^*_{ij}x_{ij}\right)^{1/2}\right\|_q \quad\text{and} \quad \|x||_{R_q,{\cl M}} = \left\|\left( E\sum x_{ij}x^*_{ij}\right)^{1/2}\right\|_q.
\]
Then a simple verification shows that
$$
 \|x\|_{C_q,{\cl M}}  = \left(\sum_k \left(\sum_{ij\ell} |(x_{ij})_{\ell k}|^2\right)^{q/2}\right)^{1/q}
 \ge \left(\sum_k \left(\sum_\ell |(x_{\ell k})_{\ell k}|^2\right)^{q/2}\right)^{1/q}$$
 {and similarly we find} 
$$\|x\|_{R_q,{\cl M}}  \ge \left(\sum_\ell \left(\sum_k |(x_{\ell k})_{\ell k}|^2\right)^{q/2}\right)^{1/q}.
$$
Then \eqref{eq2.5} follows immediately. In particular 
$ [\lambda]_q\le |||\hat x|||_{q,{\cl M}}$, and since the converse is immediate we obtain
$|||\hat x|||_{q,{\cl M}}=[\lambda]_q$.
 \end{proof}

\begin{rem}
It seems worthwhile to point out that \eqref{eq2.2} is no longer valid when $0<q<1$ (even with a constant). Indeed, restricting to the case when $x_{ij} = 0$ $\forall i\ne j$, these inequalities imply
\begin{equation}\label{eq2.4}
 \left(\sum_j |(x_{jj})_{jj}|^q\right)^{1/q} \le \left\|\left(\sum\nolimits_j x^*_{jj}x_{jj}\right)^{1/2}\right\|_q.
\end{equation}
Now let us consider the case
\[
 x_{jj} = \sum^n_{k=1} e_{jk}.
\]
On one hand we have
$x^*_{jj}x_{jj}=nP$ where $P$ is the rank one orthogonal projection onto $n^{-1/2}\sum e_k$, so that
$(\sum x^*_{jj}x_{jj})^{1/2}=nP$ and hence $\big\|\big(\sum x^*_{jj}x_{jj}\big)^{1/2}\big\|_q=n$.
But on the other hand $(x_{jj})_{jj}=1$ and hence
\[
 \left(\sum|(x_{jj})_{jj}|^q\right)^{1/q} = n^{1/q}.
\]
This shows that \eqref{eq2.4} and hence \eqref{eq2.2} fails for $q<1$. The same example shows (a fortiori) that \eqref{eq2.3} also fails for $q<1$.
\end{rem}

\begin{rem}\label{rem2.3}
 Let $j\colon\ L_p(M,\tau)\to L_p(M',\tau')$ be an isometric embedding. Let $y_k = j(x_k)$. Then clearly
\[
 \int\left\|\sum r_k(t)y_k\right\|^p_p dt = \int \left\|\sum r_k(t)x_k\right\|^p_p dt.
\]
However, when $0<p<1$, we do not see how to prove that there is a constant $C$ such that
\[
 |||(x_k)|||_p \le C|||(y_k)|||_p,
\]
although when $p\ge 1$ this holds with $C=1$ using a conditional expectation.
\end{rem}

This may be an indication that   $(Kh_p)$ does not hold for $0<p<1$, at least  in the same form as for $p\ge 1$.

\section{The case $\pmb{0<p<1}$}\label{sec3}

In $(K_q)$, we may consider the case when $L_q(\tau) = S_q$ (Schatten $q$-class) and the sequence $x=(x_k)$ is of the form $x_{ij} = \lambda_{ij}e_{ij}$ with $\lambda_{ij} \in {\bb C}$. For this special case, the approach used in the preceding section works for all $0<p<q$. Thus,
 we obtain:

\begin{thm}\label{thm3.1}
 Let $(\vp_{ij})$ be an i.i.d sequence of $\{+1,-1\}$-valued random variables on a probability space with ${\bb P}(\vp_{ij}=\pm 1)=1/2$. Then for any $0<p<1$ there is a constant $\beta_p$ such that
$$
[\lambda]_p  \le \beta_p \left(\int\left\|\sum \vp_{ij}\lambda_{ij}e_{ij}\right\|^p_{S_p} d{\bb P}\right)^{1/p}\\
$$ {where}
$[\lambda]_p$
is defined in \eqref{lambda}.  
\end{thm}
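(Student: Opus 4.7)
The plan is to apply the conditional-expectation extrapolation of Theorem \ref{thm2.1}, with $\mathcal{M}\subset B(\ell_2)$ the diagonal subalgebra and $(\xi_k)=(\varphi_{ij})$ indexed by pairs $(i,j)$, specialized to sequences of the form $\hat x=(\lambda_{ij}e_{ij})$. By Lemma \ref{lam}, the quantity $[\lambda]_p$ equals $|||\hat x|||_{p,\mathcal M}$, so the claim is precisely $(K_p)$ in conditional form for this $\hat x$. As starting point we use the conditional $(K_q)$ for some fixed $1<q<2$: applying Lemma \ref{lam} at level $q$, this reduces to the classical non-commutative Khintchine inequality $(Kh_q)$ of \cite{LP1}.

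The structural feature that makes the extrapolation run for $0<p<1$ in this restricted class is that, for any diagonal $f=\sum_kf_ke_{kk}\in\mathcal D\cap L_1(\mathcal M)$ (strictly positive on the support of $\lambda$), the equation $x_k=(f^{1/r}y_k+y_kf^{1/r})/2$ has the unique solution $y_k=\tfrac{2\lambda_{ij}}{f_i^{1/r}+f_j^{1/r}}e_{ij}$. Hence the $y_k$---and likewise the matrices $y_k(\theta)=\tfrac{2\lambda_{ij}}{f_i^{\theta/r}+f_j^{\theta/r}}e_{ij}$ that appear in the proof of Step 3---retain the single-entry form, and every norm that enters Steps 1--3 reduces to an explicit scalar expression. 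Step 1 runs verbatim in the conditional version. For Step 2 we take $f_k$ proportional to $(\alpha_k^2+\beta_k^2)^{p/2}$, where $\alpha_k$, $\beta_k$ are the column/row $\ell_2$-norms associated with a near-optimal decomposition $\lambda=a+b$ for $|||\hat x|||_{p,\mathcal M}$; using the elementary inequality $(\alpha^2+\beta^2)^{p/2}\le \alpha^p+\beta^p$, valid for $0<p\le 2$, the proof of Section \ref{sec1} yields $C_{2,\mathcal M}(\hat x)\le C''[\lambda]_p$ with no need for the assumption $p\ge 1$.

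The main obstacle is Step 3, the H\"older-type inequality \eqref{eq1.6}, which is precisely the point at which the general argument fails (or is unknown) for $p<1$. The plan is to verify the condition $(\gamma',\gamma'')$ of Section \ref{sec1} for the choice of $f$ above. Bound \eqref{eq1.7} is built into the construction, while for bound \eqref{eq1.8} one writes $F^{1/r}T=m\ast S$ with Schur multiplier $m_{ij}=\tfrac{2f_i^{1/r}}{f_i^{1/r}+f_j^{1/r}}$, then uses the independent-sign structure of $S=\sum\varphi_{ij}\lambda_{ij}e_{ij}$ together with the explicit scaling of $f_k$ to establish $\|F^{1/r}T\|_p\le \gamma''\|S\|_p$ by direct scalar manipulations on the random-matrix coefficients. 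Granted $(\gamma',\gamma'')$, the argument of Theorem \ref{thm1.4}---invoking Lemma \ref{lem1.5} (valid in the full range $0<p<q<2$ by the complex interpolation of non-commutative $L_p$-spaces \cite{X2}) and Lemma \ref{lem1.6}(ii) (which requires only $q>1$)---delivers \eqref{eq1.6}. Combining the three steps as in the proof of Theorem \ref{thm1.1} gives $|||\hat x|||_{p,\mathcal M}\le \beta_p\|S\|_p$, which is the claim by Lemma \ref{lam}.
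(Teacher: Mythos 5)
Your proposal follows essentially the same route as the paper's proof: reduce to $|||\hat x|||_{p,\mathcal{M}}$ via Lemma \ref{lam}, observe that the single-entry form $y_{ij}=\mu_{ij}e_{ij}$ is preserved when one divides through by $f_i^{1/r}+f_j^{1/r}$ with $f$ diagonal, and verify condition $(\gamma',\gamma'')$ so that Theorem \ref{thm2.1} (the conditional variant of Theorem \ref{thm1.4}) applies. This is exactly the paper's scheme.

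One point should be made precise: the ``direct scalar manipulations'' you invoke to prove $\|F^{1/r}T\|_p\le\gamma''\|S\|_p$ are not mere Schur-multiplier estimates on $S_p$ (which are unavailable for $p<1$). The mechanism the paper uses is Proposition \ref{pro3.5}: because the Schur symbol $m_{ij}=2f_i^{1/r}/(f_i^{1/r}+f_j^{1/r})$ lies in $[0,2]$ and $S=\sum\vp_{ij}\lambda_{ij}e_{ij}$ is a Rademacher sum in the $p$-normed space $S_p$, the $p$-unconditionality of Rademacher series yields the bound with $\gamma''=2\chi_p$. Also note that, contrary to what your phrasing suggests, the ``explicit scaling of $f_k$'' is irrelevant for this inequality --- it holds for every positive diagonal $f$; the specific choice of $f$ (as in Remark \ref{rem3.4}, proportional to row/column $\ell_2^{p}$-norms) is only used to secure the companion bound $\gamma'$ in \eqref{eq1.7}.
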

\begin{rk} Of course, by \cite{LP1,LPP}, the case $1\le p\le 2$ is already known.
\end{rk}
\begin{rem}\label{rem3.2}
 Since $S_p$ is $p$-normed when $0<p<1$, the converse inequality is obvious:\ we have
\[
 \|a\|_p \le \left(\sum_i \left\|\sum_j a_{ij}e_{ij}\right\|^p_p\right)^{1/p} = \left(\sum_i\left(\sum_j |a_{ij}|^2\right)^{p/2}\right)^{1/2}
\]
and similarly $\|b\|_p \le (\sum_j(\sum_i|b_{ij}|^2)^{p/2})^{1/2}$. Therefore
\[
 \sup_{\vp_{ij}=\pm 1} \left\|\sum \vp_{ij}\lambda_{ij}e_{ij}\right\|_p \le [\lambda]_p.
\]
By well known general results (cf.\cite{Ka}) this allows us to formulate the 
\end{rem}

\begin{cor}\label{cor3.3}
 Let $\lambda_{ij}\in {\bb C}$ be arbitrary complex scalars. The following are equivalent.
\begin{itemize}
\item[\rm (i)] The matrix $[\vp_{ij}\lambda_{ij}]$ belongs to $S_p$ for almost all choices of signs $\vp_{ij} = \pm 1$.
\item[\rm (ii)] Same as (i) for all choices of signs.
\item[\rm (iii)] There is a decomposition $\lambda_{ij} = a_{ij}+b_{ij}$ with
\[
\sum_i\left(\sum_j|a_{ij}|^2\right)^{p/2} <\infty \quad \text{and}\quad \sum_j\left(\sum_i |b_{ij}|^2\right)^{p/2}<\infty
\]
i.e.\ in short $[\lambda]_p<\infty$.
\end{itemize}
\end{cor}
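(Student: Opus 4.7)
The plan is to establish the chain $(iii)\Rightarrow(ii)\Rightarrow(i)\Rightarrow(iii)$; only the last implication requires work.

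The implication $(iii)\Rightarrow(ii)$ is immediate from Remark \ref{rem3.2}, which provides the deterministic estimate $\sup_{\vp}\|\sum \vp_{ij}\lambda_{ij}e_{ij}\|_p\le [\lambda]_p$, and $(ii)\Rightarrow(i)$ is trivial.

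For $(i)\Rightarrow(iii)$ I would proceed in three stages. First, upgrade the almost-sure finiteness to $L^p$-integrability. The partial sums $T_N=\sum_{\max(i,j)\le N}\vp_{ij}\lambda_{ij}e_{ij}$ are sums of independent symmetric random vectors in the complete $p$-normed space $S_p$, and by hypothesis converge almost surely. An It\^o--Nisio / Kahane argument, valid in the quasi-Banach setting (cf.\ \cite{Ka}), then yields
\[
\bb E\,\Bigl\|\sum \vp_{ij}\lambda_{ij}e_{ij}\Bigr\|_p^p < \infty.
\]

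Second, apply Theorem \ref{thm3.1} to finite truncations. For $N\ge 1$, set $\lambda^{(N)}_{ij}=\lambda_{ij}$ when $i,j\le N$ and $0$ otherwise. Since compression by the coordinate projection onto $\mathrm{span}(e_1,\dots,e_N)$ does not increase singular values, $\|T_N\|_p\le \|\sum \vp_{ij}\lambda_{ij}e_{ij}\|_p$ pointwise, so Theorem \ref{thm3.1} produces a decomposition $\lambda^{(N)}_{ij}=a^{(N)}_{ij}+b^{(N)}_{ij}$ supported in $\{i,j\le N\}$ with
\[
\sum_i\Bigl(\sum_j|a^{(N)}_{ij}|^2\Bigr)^{p/2}+\sum_j\Bigl(\sum_i|b^{(N)}_{ij}|^2\Bigr)^{p/2}\le K,
\]
where $K=\beta_p^p\,\bb E\|\sum \vp_{ij}\lambda_{ij}e_{ij}\|_p^p$ is independent of $N$.

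Third, pass to the limit. The uniform bound forces $|a^{(N)}_{ij}|,|b^{(N)}_{ij}|\le K^{1/p}$ for all $N,i,j$. A diagonal extraction then gives a subsequence $(N_k)$ along which $a^{(N_k)}_{ij}\to a_{ij}$ and $b^{(N_k)}_{ij}\to b_{ij}$ pointwise; clearly $\lambda_{ij}=a_{ij}+b_{ij}$, and two applications of Fatou's lemma yield the summability bounds required by $(iii)$. The only delicate point is the first stage, but it reduces to standard facts about random series in complete metrizable topological vector spaces and so poses no real obstacle.
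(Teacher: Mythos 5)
Your proposal is correct and follows exactly the route the paper itself indicates: Remark \ref{rem3.2} gives (iii)$\Rightarrow$(ii), (ii)$\Rightarrow$(i) is trivial, and (i)$\Rightarrow$(iii) follows from Theorem \ref{thm3.1} together with ``well known general results'' (the paper's reference to \cite{Ka}), which you have simply spelled out — It\^o--Nisio/contraction-principle integrability in the complete $p$-normed space $S_p$, reduction to finite truncations, and a diagonal/Fatou passage to the limit. No genuine differences from the paper's argument, which cites these facts without elaboration.
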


\begin{rem}
Note that when $0<p<1$, the spaces $S_p$ or $L_p(\tau)$ are $p$-normed, i.e.\ their norm satisfies for any pair of elements $x,y$
\begin{equation}\label{eq3.1}
 \|x+y\|^p \le \|x\|^p + \|y\|^p.
\end{equation}
\end{rem}

\begin{rem}\label{rem3.4}
 Assume here that $0<p\le 2$. Note that $[\lambda]_p<1$ implies that there is a sequence $f_i>0$ with $\sum f_i\le 1$ such that, if we set $\frac1r = \frac1p-\frac12$ we have
\begin{equation}\label{eq3.2}
 \left(\sum_{ij}|(f^{\frac1r}_i + f^{\frac1r}_j)^{-1} \lambda_{ij}|^2\right)^{1/2} \le 2.
\end{equation}
Indeed, we have $\lambda_{ij} = a_{ij}+b_{ij}$ with 
$(\sum\limits_i(\sum\limits_j|a_{ij}|^2)^{p/2})^{1/p} + (\sum\limits_j(\sum\limits_i |b_{ij}|^2)^{p/2})^{1/p}<1$
we then set $a_i = (\sum\limits_j|a_{ij}|^2)^{p/2}$ and  $b_j = (\sum\limits_i|b_{ij}|^2)^{p/2}$ so that $(\sum a_i) + (\sum b_j) < 1$. Note that:
\[
 |(a^{\frac1r}_i + b^{\frac1r}_j)^{-1} (\lambda_{ij})| \le a^{-\frac1r}_i |a_{ij}| + |b_{ij}|b^{-1/r}_j
\]
and hence by H\"older
\begin{align*}
 \left(\sum_{ij} |(a^{\frac1r}_i + b^{\frac1r}_j)^{-1} \lambda_{ij}|^2\right)^{1/2} &\le \left(\sum |a^{-1/r}_i a^{1/p}_i|^2\right)^{1/2}\\
&\quad + \left(\sum |b^{-1/r}_j b^{1/p}_j|^2\right)^{1/2}\\
&= \left(\sum a_i\right)^{1/2} + \left(\sum b_j\right)^{1/2} \le 2.
\end{align*}
Let $f_i = a_i+b_i$. Then $\sum f_i < 1$,  \eqref{eq3.2} holds and, if we  perturb  $f_i$ slightly, we may assume $f_i>0$ for all $i$.
$\hfill\square$
\end{rem}

We will use the following well known elementary fact.

\begin{pro}\label{pro3.5}
 Let $X$ be a $p$-normed space $(0<p\le 1)$, i.e.\ we assume
\begin{equation}
\|x+y\|^p \le \|x\|^p + \|y\|^p.\tag*{$\forall x,y\in X$}
\end{equation}
Then there is a constant $\chi_p$ such that for any finite sequence $(x_k)$ in $X$ and any sequence of real numbers $(\alpha_k)$ we have
\begin{equation}\label{eq3.3}
 \left\|\sum \alpha_k r_k x_k\right\|_{L_p(X)} \le \chi_p \sup_k|\alpha_k| \left\|\sum \vp_kx_k\right\|_{L_p(X)}.
\end{equation}
Here $(r_k)$ denote the Rademacher functions on $[0,1)$ and $L_p(X) = L_p([0,1]; X)$.
\end{pro}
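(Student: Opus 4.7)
The plan is to reduce to $\sup_k|\alpha_k|\le 1$ by homogeneity, then exploit a ``signed binary'' expansion of each $\alpha_k$ together with the key distributional fact that if $(\eta_k)\in\{-1,+1\}^n$ is any sign sequence, then $(\eta_k r_k)$ has the same joint distribution as $(r_k)$. The $p$-norm inequality replaces convexity, and we pay a price of a geometric series.

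More precisely, after the normalization $|\alpha_k|\le 1$, I write each $\alpha_k$ in its signed dyadic form
\[
\alpha_k = \sum_{j=1}^\infty 2^{-j}\eta_{k,j},\qquad \eta_{k,j}\in\{-1,+1\},
\]
which is possible for any number in $[-1,1]$ by a greedy construction. For every fixed $t\in [0,1)$, writing $y_j(t)=2^{-j}\sum_k \eta_{k,j} r_k(t) x_k$, the series $\sum_j y_j(t)$ converges in $X$ (since $\sum_j\|y_j(t)\|^p<\infty$ is controlled by $\sum_j 2^{-jp}(\sum_k\|x_k\|)^p$), and its sum equals $\sum_k\alpha_k r_k(t) x_k$. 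Iterating the $p$-triangle inequality $\|u+v\|^p\le \|u\|^p+\|v\|^p$ and passing to the limit (which is legitimate because $L_p$-quasi-norms are continuous under limits in $X$) yields the pointwise bound
\[
\Bigl\|\sum_k \alpha_k r_k(t) x_k\Bigr\|^p \le \sum_{j=1}^\infty 2^{-jp} \Bigl\|\sum_k \eta_{k,j} r_k(t) x_k\Bigr\|^p.
\]

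Now I integrate over $t\in[0,1]$ and invoke the distributional invariance: for each fixed $j$, the sequence $(\eta_{k,j} r_k)_k$ is obtained from $(r_k)_k$ by flipping signs on a fixed subset of coordinates, hence has the same joint distribution, so
\[
\int_0^1\Bigl\|\sum_k \eta_{k,j} r_k(t) x_k\Bigr\|^p dt = \int_0^1\Bigl\|\sum_k r_k(t) x_k\Bigr\|^p dt.
\]
Combining gives
\[
\Bigl\|\sum_k \alpha_k r_k x_k\Bigr\|_{L_p(X)}^p \le \Bigl(\sum_{j=1}^\infty 2^{-jp}\Bigr)\Bigl\|\sum_k r_k x_k\Bigr\|_{L_p(X)}^p = \frac{1}{2^p-1}\Bigl\|\sum_k \varepsilon_k x_k\Bigr\|_{L_p(X)}^p,
\]
so $\chi_p=(2^p-1)^{-1/p}$ works. (In the display I implicitly identified $\varepsilon_k$ with $r_k$, as the hypothesis just asks for Rademachers.) Restoring the normalization multiplies by $\sup_k|\alpha_k|$.

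There is no real obstacle here; the only point that deserves care is the justification of the infinite signed dyadic expansion and the interchange of summation with the $p$-triangle inequality, but both follow from the completeness of the $p$-normed space and absolute convergence of $\sum_j 2^{-jp}$. The constant obtained, $\chi_p=(2^p-1)^{-1/p}$, blows up as $p\to 0$, which matches the expected behaviour since the contraction principle is no longer a tautology in the $p$-normed setting.
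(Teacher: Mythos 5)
Your proof is correct, and the engine is the same as the paper's: expand the coefficients dyadically, use that $(\eta_k r_k)_k$ and $(r_k)_k$ are equidistributed for any deterministic sign pattern $\eta$, and iterate the $p$-triangle inequality to sum a geometric series. The one genuine (if small) difference lies in the expansion: the paper first handles coefficients in $\{-1,0,1\}$ by writing each such value as the average of two elements of $\{-1,1\}$ (costing a factor $2^{\frac1p-1}$), and then performs a dyadic expansion with $\{-1,0,1\}$ digits; you instead expand directly into $\pm1$ digits via $\alpha=\sum_{j\ge1}2^{-j}\eta_j$ with $\eta_j\in\{-1,1\}$, which is available because $(1+\alpha)/2\in[0,1]$ has a standard $\{0,1\}$ binary expansion. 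This skips one reduction and yields the slightly better constant $\chi_p=(2^p-1)^{-1/p}$ in place of the paper's $2^{\frac1p-1}(2^p-1)^{-1/p}$. One cosmetic slip: to justify absolute convergence you wrote $\bigl(\sum_k\|x_k\|\bigr)^p$, whereas the $p$-triangle inequality gives the bound $\sum_k\|x_k\|^p$; this does not affect the argument.
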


\begin{proof}
If $\alpha_k\in \{-1,1\}$, we have equality in \eqref{eq3.3} with $\chi_p=1$. If $\alpha_k \in \{-1,0,1\}$ we can write $\alpha_k = (\beta_k+\gamma_k)/2$ with $\beta_k\in \{-1,1\}$, $\gamma_k\in \{-1,1\}$ and then we obtain \eqref{eq3.2} (using the $p$-triangle inequality \eqref{eq3.1}) with $\chi_p = 2^{\frac1p-1}$. For the general case, we can write any $\alpha_k$ in $[-1,1]$ as a series $\alpha_k = \sum^\infty_1 \alpha_k(m) \xi_k(m)$ with $\alpha_k(m)\in \{-1,0,1\}$ and $|\xi_k(m)|\le 2^{-m}$. We then obtain \eqref{eq3.3} with 
\begin{align*}
\chi_p &= 2^{\frac1p-1} \left(\sum\nolimits^\infty_1 2^{-mp}\right)^{1/p}\\
&= 2^{\frac1p-1} (2^p-1)^{-1/p}.\qquad\qed
\end{align*}
\renewcommand{\qed}{}\end{proof}

\begin{proof}[Proof of Theorem \ref{thm3.1}]
Let $S = \sum \vp_{ij}\lambda_{ij}e_{ij}$ and let $x_{ij} =  e_{ij}\lambda_{ij}$. 
We already know by \cite{LP1,LPP} the case $1\le p<2$. We will 
show that the condition $(\gamma',\gamma'')$ holds, and hence that Theorem \ref{thm3.1} follows
from Theorem \ref{thm1.4}. 

Again we assume that $M=M_n$. Let ${\cl M}\subset M$ be the subalgebra of diagonal matrices with associated conditional expectation
denoted by $E$.
Let $x=(x_{ij})$. Then  by Lemma \ref{lam}
for any $0<q<2$ we have
$$[\lambda]_q =  |||x|||_{q, {\cal M}  }.$$
So the inequality in Theorem \ref{thm3.1} boils down
to $|||x|||_{p, {\cal M}  }\le \beta_p \|\sum \vp_{ij} x_{ij}\|_p$.
We need to observe that when we run the proof of Theorem 1.6 with  $|||x|||_{p, {\cal M}  }$
in place of  $|||x|||_{p  }$ we only need to know $(K_q)$ for
a family $(y_k)$ such that each $y_k$ lies in the closure in $L_q(\tau)$
of elements in  ${\cal M} x_k {\cal M}$. When  ${\cal M}$ is the algebra
of diagonal operators, that means that $y_k$ is obtained from $x_k$ by
a Schur multiplier, so that in any case when $(x_k)$ is the family $(x_{ij})$
given as above by $x_{ij} =  e_{ij}\lambda_{ij}$, then  all the families
$(y_{ij})$ are also of the same form  i.e. we have $y_{ij} =  e_{ij}\mu_{ij}$
for some scalars $\mu_{ij}$, and for the latter we know by \cite{LP1,LPP} that
the ${\cal M}$-version of $(K_q)$ holds for $1\le q\le2$.

So we will be able to conclude if we can verify the condition $(\gamma',\gamma'')$.
We claim that for some constant $C$

\begin{equation}\label{eq3.5}
 \|f^{\frac1p-\frac12}T\|_p \le C\|f^{\frac1p-\frac12} T + Tf^{\frac1p-\frac12}\|_p
\end{equation}
where $f$ is any positive diagonal matrix and   $T=\sum \vp_{ij} y_{ij} $, with $y_{ij}$ of the form $y_{ij} =  e_{ij}\mu_{ij}$ as above.
Indeed, we have
\[
 f^{\frac1p-\frac12}_i \le f^{\frac1p-\frac12}_i + f^{\frac1p-\frac12}_j
\]
and hence, by Proposition \ref{pro3.5}, \eqref{eq3.5} holds with $C=\chi_p$.
Thus, we have condition $(\gamma',\gamma'')$ with $\gamma''=\chi_p$ and by Remark \ref{rem3.4}
we can arrange to have, say, $\gamma'=4$.
Thus, modulo the above observation,   we may view Theorem \ref{thm3.1} as a corollary to Theorem \ref{thm2.1}. 
\end{proof}

\begin{rem}\label{rem9}
 Assume $\lambda_{ij}\in L_p(M,\tau)$ (or simply $\lambda_{ij}\in S_p$)
 and let $x_{ij} = e_{ij}\otimes \lambda_{ij} \in L_p(B(\ell_2) \otimes M)$. Then, at the time of this writing,  
 we do not know whether Theorem \ref{thm3.1} remains valid  for the series $\sum \vp_{ij}e_{ij}\otimes \lambda_{ij}$, with $[\lambda]_p$ replaced by
\[
[[\lambda]]_p = \inf\left\{\left(\sum_i \left\|\left(\sum_j a^*_{ij}a_{ij}\right)^{1/2}\right\|^p_p\right)^{1/p} + \left(\sum_j \left\|\left(\sum_i b_{ij}b^*_{ij}\right)^{1/2}\right\|^p_p\right)^{1/p}\right\}
\]
where the infimum runs over all decomposition,
$
\lambda_{ij} = a_{ij}+b_{ij}$  {in} $L_p(\tau)$. By $(Kh_p)$ this clearly holds when $p\ge 1$.
 
\end{rem}

\section{Remarks on $\pmb{\sigma(q)}$-sets and $\pmb{\sigma(q)_{cb}}$-sets}\label{sec3+}

In \cite{Har} (see also \cite{Har2}) the following notion is introduced:

\begin{defn}
A subset $E\subset {\bb N}\times {\bb N}$ is called a $\sigma(q)$-set $(0<q\le \infty)$ if the system $\{e_{ij}\mid (i,j)\in E\}$ is an unconditional basis of its closed linear span in $S_q$.
\end{defn}

Equivalently, there is a constant $C$ such that for any finitely supported family of scalars $\{\lambda_{ij}\mid (i,j)\in E\}$ and any bounded family of scalars $(\alpha_{ij})$ with $\sup|\alpha_{ij}| \le 1$ we have
\[
 \left\|\sum_{(i,j)\in E} \alpha_{ij}\lambda_{ij}e_{ij}\right\|_{S_q} \le C\left\|\sum_{(i,j)\in E} \lambda_{ij}e_{ij}\right\|_{S_q}.
\]
The smallest such constant $C$ is denoted by $\sigma_q(E)$.

 The ``operator space'' version of this notion is as follows:\ $E$ is called a $\sigma(q)_{cb}$-set if there is a $C$ such that for any finitely supported family $\{\lambda_{ij}\mid (i,j)\in E\}$ in $S_q$ and any $(\alpha_{ij})$ as before we have
\[
 \left\|\sum_{(i,j)\in E} \alpha_{ij}e_{ij}\otimes \lambda_{ij}\right\|_{S_q(\ell_2\otimes\ell_2)} \le C \left\|\sum_{(i,j)\in E} e_{ij}\otimes \lambda_{ij}\right\|_{S_q(\ell_2\otimes\ell_2)}.
\]
We then denote by $\sigma^{cb}_q(E)$ the smallest such constant $C$.

It is not known whether $\sigma(q)$-sets are automatically $\sigma(q)_{cb}$-sets when $q\ne 2$. (The case $q=2$ is trivial:\ every subset $E$ is $\sigma(2)_{cb}$.) By the non-commutative Khintchine inequalities (\cite{LP1,LPP}), if $1\le q<2$, $E\subset {\bb N}\times {\bb N}$ is a $\sigma(q)$-set (resp.\ $\sigma(q)_{cb}$-set) iff there is a constant $C'$ such that for all families $\{\lambda_{ij}\mid (i,j)\in E\}$ with $\lambda_{ij}$ scalar (resp.\ $\lambda_{ij}\in S_q$) we have
\begin{align*}
[\lambda]_q &\le C'\left\|\sum_{(i,j)\in E} \lambda_{ij}e_{ij}\right\|_{S_q}\\
\intertext{$\Big($resp.}
[[\lambda]]_q &\le C'\left\|\sum_{(i,j)\in E} e_{ij}\otimes \lambda_{ij}\right\|_{S_q(\ell_2\otimes \ell_2)}\Big).
\end{align*}
The proof of Theorem \ref{thm1.1}, modified
as   in Theorem \ref{thm2.1}, yields the following complement to \cite{Har}:

\begin{thm}\label{sigmap}
 Assume $1\le p<q<2$. Any $\sigma(q)$-set 
 (resp.\ $\sigma(q)_{cb}$-set) $E\subset {\bb N}\times {\bb N}$ is a $\sigma(p)$-set
 (resp.\ $\sigma(p)_{cb}$-set). \end{thm}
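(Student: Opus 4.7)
The plan is to imitate the proof of Theorem~\ref{thm3.1}: reinterpret the $\sigma(q)$-set hypothesis as a restricted form of the $\mathcal M$-version of $(K_q)$ from Theorem~\ref{thm2.1}, and then apply the extrapolation to extract the $\sigma(p)$-set conclusion. The whole argument rides on the observation, already used in Section~\ref{sec3}, that the three-step proof of Theorem~\ref{thm1.1} (in its $\mathcal M$-version) only invokes $(K_q)$ on auxiliary families $(y_k)$ lying in the closure of $\mathcal M x_k \mathcal M$, so the restricted inequality furnished by $\sigma(q)$ will supply enough input.

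Concretely, I take $N = M = B(\ell_2)$ with the standard trace and $\mathcal M \subset M$ the diagonal subalgebra. I use the orthonormal system $\xi_{ij} = e_{ij} \in L_2(N)$ indexed by $(i,j)\in E$, and, for a scalar family $(\lambda_{ij})$, the diagonal-type coefficients $x_{ij} = \lambda_{ij} e_{ij} \in L_q(M)$. By Lemma~\ref{lam}, $|||x|||_{q,\mathcal M} = [\lambda]_q$; and if $V\colon \ell_2 \to \ell_2\otimes\ell_2$ denotes the isometry $V e_i = e_i\otimes e_i$, then
\[
\sum_{(i,j)\in E} \xi_{ij}\otimes x_{ij} = \sum \lambda_{ij}\, e_{ij}\otimes e_{ij} = V\Bigl[\sum \lambda_{ij} e_{ij}\Bigr] V^{*},
\]
whence $\|\sum \xi_{ij}\otimes x_{ij}\|_{L_q(N\otimes M)} = \|\sum \lambda_{ij} e_{ij}\|_{S_q}$. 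Consequently the $\sigma(q)$-set property is exactly the $\mathcal M$-version of $(K_q)$ for this sequence $(\xi_{ij})$, this $\mathcal M$, and this class of diagonal-type coefficients.

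I now run the three steps of Theorem~\ref{thm2.1}. Since $\mathcal M e_{ij}\mathcal M \subset \mathbb C\, e_{ij}$, every auxiliary $(y_k)$ produced in Steps~1--2 stays of the diagonal form $y_{ij} = \mu_{ij} e_{ij}$, so $(K_q)$ is only ever applied within the class covered by our hypothesis. Via the same $V$-identification, Step~3 (inequality~\eqref{eq1.6}) collapses to a scalar Schur multiplier interpolation bound on $[\lambda_{ij}]$, which is supplied by Lemma~\ref{lem1.6} and the \cite{JP}-type argument used in the proof of Theorem~\ref{thm1.4}, valid for every $1 \le p < q < 2$. The resulting $\mathcal M$-version of $(K_p)$ reads $[\lambda]_p \le C'\|\sum \lambda_{ij} e_{ij}\|_{S_p}$, which by the same Khintchine-based characterization is the $\sigma(p)$-set property.

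The $\sigma(q)_{cb}$ case is handled identically with operator-valued $\lambda_{ij} \in S_q$: take $\mathcal M = \mathcal D \otimes B(\ell_2) \subset B(\ell_2)\otimes B(\ell_2)$ and coefficients $x_{ij} = e_{ij}\otimes \lambda_{ij}$, so the natural analogue of Lemma~\ref{lam} gives $|||x|||_{q,\mathcal M} = [[\lambda]]_q$ (cf.\ Remark~\ref{rem9}) and $\|\sum \xi_{ij}\otimes x_{ij}\| = \|\sum e_{ij}\otimes \lambda_{ij}\|_{S_q(S_q)}$. The main point to watch throughout is that Step~3 really does survive the change of orthonormal sequence from Rademacher to $(e_{ij})$; in the present diagonal-type regime, however, Step~3 reduces to the same scalar (resp.\ $S_q$-bimodule) Schur multiplier inequality already established in \cite{JP}.
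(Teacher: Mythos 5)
Your proof is correct and follows essentially the same route as the paper, whose own proof is the one-line remark ``The proof of Theorem~\ref{thm1.1}, modified as in Theorem~\ref{thm2.1}, yields the following.'' You have correctly unpacked what that remark requires: (a) the identification, via Lemma~\ref{lam}, of $|||x|||_{q,\mathcal M}$ with $[\lambda]_q$ for diagonal-type $x_{ij}=\lambda_{ij}e_{ij}$ and diagonal $\mathcal M$; (b) the observation (made in the paper's proof of Theorem~\ref{thm3.1}) that in the $\mathcal M$-version of the extrapolation the auxiliary families $(y_k)$ always remain in the closure of $\mathcal M x_k\mathcal M$, so the restricted $(K_q)$ furnished by the $\sigma(q)$-set hypothesis suffices; and (c) the validity of Step~3 for $1\le p<q<2$ from \cite{JP}. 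Your explicit isometry $\sum\lambda_{ij}\,e_{ij}\otimes e_{ij}=V\bigl[\sum\lambda_{ij}e_{ij}\bigr]V^*$ (with $Ve_i=e_i\otimes e_i$) is a pleasant way to make the identification $\|\sum\xi_{ij}\otimes x_{ij}\|_{L_q(\varphi\times\tau)}=\|\sum\lambda_{ij}e_{ij}\|_{S_q}$ completely explicit; the paper reaches the same equivalence by going through the Khintchine characterization of $\sigma(q)$-sets, which you also invoke (and need, to translate unconditionality into the $[\lambda]_q$ lower bound at the start and back again at the end). The only slight imprecision is the passing attribution of Step~3 to ``Lemma~\ref{lem1.6} and the \cite{JP}-type argument used in Theorem~\ref{thm1.4}'': for $p\ge 1$ the cleanest appeal is to inequality~\eqref{eq1.6} itself as established in the proof of Step~\ref{stp3}/the Appendix, rather than to the $(\gamma',\gamma'')$ route, but this does not affect the validity of the argument.
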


\begin{cor} There is a constant $c\ge 1$ such that, for any $n$, the usual ``basis" 
$\{ e_{ij}\}$ of $S_1^n$ contains  
a $c$-unconditional subset of size $\ge   n^{3/2}$.
\end{cor}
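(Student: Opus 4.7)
The plan is to combine Harcharras's construction of random $\sigma(4)$-sets with the extrapolation result Theorem \ref{sigmap}. First I would invoke \cite{Har}: for each $n$ there exist subsets $E_n\subset \{1,\dots,n\}\times\{1,\dots,n\}$ with $|E_n|\ge c_0 n^{3/2}$ (for an absolute $c_0>0$) that are $\sigma(4)$-sets with $\sup_n \sigma_4(E_n)<\infty$. This is precisely the ``$Z(2)$-set'' construction already quoted elsewhere in the paper (it is the same construction that is used in the proof of Theorem \ref{thm1.7}).

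Next I would use the self-duality of the $\sigma(q)$ property under trace pairing. For a bounded scalar symbol $\alpha=(\alpha_{ij})$, let $M_\alpha$ denote the associated Schur multiplier. A direct computation with $\langle x,y\rangle=\mathrm{tr}(xy)$ shows that the adjoint of $M_\alpha\colon S_q\to S_q$ under the identification $S_q^{\ast}=S_{q'}$ is the Schur multiplier with the transposed symbol $(\alpha_{ji})$. Since $\sup_{ij}|\alpha_{ij}|=\sup_{ij}|\alpha_{ji}|$, it follows that $E$ is a $\sigma(q)$-set with constant $C$ if and only if $E^{T}:=\{(j,i):(i,j)\in E\}$ is a $\sigma(q')$-set with the same constant. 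Applied with $q=4$, the transposed sets $E_n^{T}$, of the same cardinality $\ge c_0 n^{3/2}$, are $\sigma(4/3)$-sets uniformly in $n$.

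Finally I would apply Theorem \ref{sigmap} with $p=1$ and $q=4/3$ (the hypothesis $1\le p<q<2$ is satisfied): the $\sigma(4/3)$-sets $E_n^{T}$ are then $\sigma(1)$-sets with $\sup_n\sigma_1(E_n^{T})<\infty$. Unwinding the definition of $\sigma(1)$, this says exactly that $\{e_{ij}:(i,j)\in E_n^{T}\}$ is a $c$-unconditional subset of the canonical basis of $S_1^{n}$, with $c$ independent of $n$, of cardinality $\ge c_0 n^{3/2}$, which (after absorbing $c_0$ into the implicit constant, or by restricting to $n$ large) yields the corollary.

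The substantive work is entirely hidden in Theorem \ref{sigmap}; the present deduction is routine and I do not anticipate any real obstacle. The only small point worth verifying carefully is the duality step above, but this reduces to the elementary identification $M_\alpha^{\ast}=M_{\alpha^{T}}$ under trace pairing, together with the fact that unconditional basis constants are self-dual on the biorthogonal system.
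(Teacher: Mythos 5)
Your argument is correct and is essentially the paper's proof: invoke Harcharras (the paper cites \cite[Th.\ 4.8]{Har} for a ``Hankelian'' $\sigma(4)_{cb}$-subset of $[n]\times[n]$ of cardinality $\gtrsim n^{3/2}$), pass from $4$ to $4/3$ by duality, and apply Theorem \ref{sigmap} with $p=1$, $q=4/3$. Two small remarks: (1) you work with the plain $\sigma(q)$ property while the paper quotes the $\sigma(q)_{cb}$ version --- both suffice since the corollary only asks for (not-necessarily-complete) unconditionality in $S_1$; (2) the parenthetical claim that this is ``the same construction used in Theorem \ref{thm1.7}'' is a slight conflation --- Theorem \ref{thm1.7} uses a $Z(2)$-subset of $\{e^{ikt}\}$ of size $\sim n^{1/2}$, whereas here one needs the associated Hankelian subset of $[n]\times[n]$ of size $\sim n^{3/2}$; also, the step ``unconditional basis constants are self-dual on the biorthogonal system'' strictly speaking uses that the span of a $\sigma(q)$-system is complemented (by a Schur multiplier), a standard fact about $\sigma(q)$-sets that the paper's preceding Remark attributes to \cite{Har}, so it is fine to invoke it but worth stating explicitly.
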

\begin{proof} By \cite[Th. 4.8]{Har}   there is a constant $c\ge 1$
 such that, for any $n$, the set $[n]\times [n]$ contains a further (``Hankelian") subset that is a
 $\sigma(4)_{cb}$-set
(and hence by duality also $\sigma(4/3)_{cb}$) with constant $\le c$ and cardinal $\ge   n^{3/2}$.
\end{proof}
\begin{prb} What is the ``right" order of growth in the preceding statement?
 Can 3/2 be replaced by any number $<2$ ?
 \end{prb}
 
\begin{rem}
 As observed in \cite{Har}, if $2<p<q$, it is easy to show by interpolation that any $\sigma(q)$-set (resp.\ $\sigma(q)_{cb}$-set) is a $\sigma(p)$-set (resp.\ $\sigma(p)_{cb}$-set). Moreover, any such set is a $\sigma(q')$-set (resp.\ $\sigma(q')_{cb}$-set) where $q^{\prime-1} = 1 - q^{-1}$. However, the fact that e.g.\ $\sigma(q')\Rightarrow q(1)$   is new as far as we know.
\end{rem}

\section{Grothendieck-Maurey  factorization for Schur multipliers \\ ($\pmb{0<p<1}$)}\label{sec4}

Consider a bounded linear map $u\colon \ H\to L_p(\tau)$ on a Hilbert space $H$ with $0<p\le 2$. 
To avoid technicalities, we assume that the range of $u$ lies in a finite dimensional von Neumann subalgebra of $M$ on which $\tau$ is finite. When $p\ge1$, it is known that there is $f$ in $L_1(\tau)_+$ with $\tau(f)=1$ and a bounded linear map $\tilde u\colon \ H\to L_2(\tau)$ such that
\begin{equation}
 u(x) = f^{\frac1p-\frac12} \tilde u(x) + \tilde u(x)f^{\frac1p-\frac12}\tag*{$\forall x\in H$}
\end{equation}
and $\|\tilde u\| \le K_p\|u\|$ where $K_p$ is a constant independent of $u$.

In the case $p=1$, this fact is easy to deduce from the
  dual form proved  in \cite{P1} for maps
  from $M$ to $H$; the latter is often designated as the non-commutative ``little GT'' (here GT stands for Grothendieck's theorem). It is easy to deduce this statement from $(Kh_p)$ (see \cite{LPP} for more details) in the case $1\le p<2$ (note that $p=2$ is trivial). 
See \cite{LP2} for a proof that the best constant $K_p$ remains bounded when $p$ runs over [1,2]. We refer the reader to \cite{LP2,LPX} and \cite{JP} for various generalizations. 

It seems natural to conjecture that the preceding factorization of $u$ remains valid for any $p$ with $0<p<1$.  Unfortunately, we leave this open. Nevertheless, in analogy with \S \ref{sec3}, we are able to prove the preceding factorization in the special case of Schur multipliers as follows.

\begin{thm}\label{thm4.1}
 Let $0<p<1$. Let $r$ be such that $\frac1r = \frac1p-\frac12$. Consider a Schur multiplier
\[
u_\varphi\colon \ [x_{ij}] \to [x_{ij}\varphi_{ij}]
\]
where $\varphi_{ij}\in {\bb C}$. The following are equivalent:
\begin{itemize}
 \item[\rm (i)] $u_\varphi$ is bounded from $S_2$ to $S_p$.
\item[\rm (ii)] $\varphi$ admits a decomposition as
$
 \varphi = \psi+\chi$ {with}
$\sum\limits_i \sup\limits_j|\psi_{ij}|^r<\infty$ and $\sum\limits_j \sup\limits_i|\chi_{ij}|^r<\infty$.
\item[\rm (iii)] There is a sequence $f_i\ge 0$ with $\sum f_i<\infty$ such that 
$
 |\varphi_{ij}|\le f^{1/r}_i +f^{1/r}_j.
$
\end{itemize}
\end{thm}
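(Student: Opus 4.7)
The plan is to dispose first of the elementary equivalences (ii) $\Leftrightarrow$ (iii) and (iii) $\Rightarrow$ (i), and then to concentrate on (i) $\Rightarrow$ (iii), which is the substantive content.

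For (iii) $\Rightarrow$ (i), I would split $\varphi = \varphi^{(1)} + \varphi^{(2)}$ with $\varphi^{(1)}_{ij} := f_i^{1/r}\varphi_{ij}/(f_i^{1/r}+f_j^{1/r})$ and $\varphi^{(2)} := \varphi - \varphi^{(1)}$, so that $|\varphi^{(1)}_{ij}| \le f_i^{1/r}$ and $|\varphi^{(2)}_{ij}| \le f_j^{1/r}$. Setting $D := \operatorname{diag}(f_i^{1/r})$, the Schur multiplier $u_{\varphi^{(1)}}$ factors as left multiplication by $D$ composed with a Schur multiplier whose symbol has modulus $\le 1$ everywhere (hence contractive on $S_2$); non-commutative H\"older with $\tfrac1p = \tfrac1r + \tfrac12$ then gives $\|u_{\varphi^{(1)}}(x)\|_p \le \|D\|_r\|x\|_2 = (\sum_i f_i)^{1/r}\|x\|_2$, and the symmetric argument on the right handles $\varphi^{(2)}$; the $p$-triangle inequality combines them. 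For (ii) $\Leftrightarrow$ (iii): given (ii), I would set $f_i := \sup_j|\psi_{ij}|^r + \sup_k|\chi_{ki}|^r$, so that $\sum_i f_i < \infty$ and $|\varphi_{ij}| \le |\psi_{ij}|+|\chi_{ij}| \le f_i^{1/r}+f_j^{1/r}$; conversely, given (iii), set $\psi_{ij} := \varphi_{ij}\mathbf{1}_{\{f_i \ge f_j\}}$ and $\chi := \varphi - \psi$, and observe that $|\psi_{ij}| \le 2f_i^{1/r}$ on its support, hence $\sum_i \sup_j|\psi_{ij}|^r \le 2^r\sum_i f_i$.

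For (i) $\Rightarrow$ (iii), normalize $K := \|u_\varphi\|_{S_2 \to S_p} = 1$. For any scalar array $y$ and any Rademacher signs $\epsilon_{ij}$, the matrix $x := \sum \epsilon_{ij} y_{ij} e_{ij}$ satisfies $\|x\|_{S_2} = \|y\|_{\ell_2}$ independently of $\epsilon$, while $u_\varphi(x) = \sum\epsilon_{ij}\varphi_{ij}y_{ij}e_{ij}$. Boundedness of $u_\varphi$ gives $(\mathbb{E}\,\|\sum\epsilon_{ij}\varphi_{ij}y_{ij}e_{ij}\|_p^p)^{1/p} \le \|y\|_{\ell_2}$, and Theorem~\ref{thm3.1} applied to the scalar coefficients $\lambda_{ij} := \varphi_{ij}y_{ij}$ converts this into the uniform estimate $[\varphi y]_p \le \beta_p\|y\|_{\ell_2}$ for every $y$, where $[\,\cdot\,]_p$ is as in \eqref{lambda}. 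From here I would rerun the verification of condition $(\gamma',\gamma'')$ from the proof of Theorem~\ref{thm3.1}, now in the conditional-expectation variant of \S\ref{sec2} relative to the diagonal subalgebra $\mathcal{M}$: the pointwise bound $f_i^{1/r} \le f_i^{1/r}+f_j^{1/r}$ combined with Proposition~\ref{pro3.5} yields, for each finite truncation $\varphi|_{[n]\times[n]}$, a diagonal density $f^{(n)}$ with $\sum_i f^{(n)}_i \le c^r$ (a constant $c$ independent of $n$) together with a Schur multiplier $\widetilde\varphi^{(n)}$ factoring $\varphi^{(n)}_{ij} = (f_i^{(n)1/r}+f_j^{(n)1/r})\widetilde\varphi^{(n)}_{ij}$, whose $S_2^n$ boundedness forces $\sup_{ij}|\widetilde\varphi^{(n)}_{ij}|\le c'$ by testing on $e_{ij}$. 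Finally, a diagonal extraction from the $\ell_1$-bounded family $(f^{(n)})$ produces a pointwise limit $f$ with $\sum_i f_i<\infty$ and $|\varphi_{ij}|\le C(f_i^{1/r}+f_j^{1/r})$ globally, which is (iii) after absorbing $C$ into $f$.

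The expected main obstacle is the second step of (i) $\Rightarrow$ (iii): the factorization supplied by Remark~\ref{rem3.4} applied to $\lambda = \varphi y$ yields a density depending on the test array $y$, and passing from these $y$-dependent factorizations to a single density controlling all of $\varphi$ requires exploiting the Schur multiplier structure (bimodularity over the diagonal algebra and averaging over diagonal unitaries, which pins the density onto $\mathcal{M}$) together with the $p$-normed Rademacher absorption of Proposition~\ref{pro3.5}, exactly as in the final calculation in the proof of Theorem~\ref{thm3.1}. This is the ``analogy with \S\ref{sec3}'' mentioned just before the theorem, and is the ingredient that is unavailable in the general Maurey-factorization question for $0<p<1$.
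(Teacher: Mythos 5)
Your treatment of the elementary implications (iii) $\Rightarrow$ (i) and (ii) $\Leftrightarrow$ (iii) is correct, and your opening reduction for (i) $\Rightarrow$ (iii) --- testing $u_\varphi$ against $x=\sum\epsilon_{ij}y_{ij}e_{ij}$, averaging over signs, and invoking Theorem~\ref{thm3.1} to obtain $[\varphi y]_p\le\beta_p\|y\|_{\ell_2}$ uniformly in $y$ --- is a valid and natural step. In fact this is precisely the route attributed to Q.~Xu in the remark following Corollary~\ref{thm4.1+} (``any Schur multiplier bounded from $S_2$ to $S_p$ must be a bounded multiplier from $\ell_2(\bb N\times\bb N)$ to $\ell_p(\ell_2)+{}^t\ell_p(\ell_2)$; then a well known variant of Maurey's classical factorization yields (ii) or (iii)''). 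So you have identified a genuine second proof strategy, but you should be aware it is \emph{not} the one the paper actually executes: the paper's own argument re-runs the three-step extrapolation scheme of \S\ref{sec1} directly at the level of Schur multiplier norms, with $C'_q(\varphi)$ playing the role of $C_q(x)$, $]\varphi[_p$ playing the role of $|||x|||_p$, and Steps $1'$, $2'$, $3'$ proved exactly as before (using the known $1\le q<2$ case for Step $1'$ and Proposition~\ref{pro3.5} to verify condition $(\gamma',\gamma'')$ for Step $3'$). That route produces the decomposition $\varphi=\psi+\chi$ of (ii) directly, and never needs to pass from a family of $y$-dependent densities to a single one.

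The gap in your write-up is precisely the step you flag as the ``main obstacle,'' and I do not think your sketch resolves it. You claim that ``Proposition~\ref{pro3.5} yields, for each finite truncation $\varphi|_{[n]\times[n]}$, a diagonal density $f^{(n)}$ ... together with a Schur multiplier $\widetilde\varphi^{(n)}$ factoring $\varphi^{(n)}_{ij}=(f_i^{(n)\,1/r}+f_j^{(n)\,1/r})\widetilde\varphi^{(n)}_{ij}$ with $\sup_{ij}|\widetilde\varphi^{(n)}_{ij}|\le c'$.'' But producing such an $f^{(n)}$ from the hypothesis $\|u_{\varphi^{(n)}}\colon S_2^n\to S_p^n\|\le 1$ \emph{is} the content of (i) $\Rightarrow$ (iii) in the truncated setting --- invoking it at this point is circular. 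What you actually have from Remark~\ref{rem3.4} is a density $f=f(y)$ that depends on the test array $y$; consolidating these into a single $f$ is a Maurey--Nikishin factorization step, and for $0<p<1$ that step is not automatic. The paper makes this obstruction explicit in Remark~\ref{rem4.5}: for $1\le p<2$ the exponent $-2/r=1-2/p$ lies in $[-1,0)$ and $t\mapsto t^{-2/r}$ is operator convex, which is exactly what lets one replace a mixture $\lambda$ of densities by the single density $F=\int f\,d\lambda(f)$; for $0<p<1$ one has $-2/r<-1$, operator convexity fails, and the paper explicitly leaves open whether one can ``get rid of $\lambda$.'' Your appeal to ``bimodularity and averaging over diagonal unitaries'' does pin the density onto $\mathcal M$, but does not by itself supply the missing convexity (the optimal decomposition $\varphi y=a(y)+b(y)$ need not depend linearly on $y$, so averaging does not respect it). You would need to spell out the commutative Maurey argument in the sum space $\ell_p(\ell_2)+{}^t\ell_p(\ell_2)$ with the exponents $0<p<1$ in hand; as written this is asserted rather than proved. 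Either carry out that commutative factorization carefully, or switch to the paper's self-contained extrapolation via $C'_q$ and $]\varphi[_p$, which avoids the issue entirely.
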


\begin{proof}{(sketch)}  
(ii) $\Leftrightarrow$ (iii) is elementary, and (ii) $\Rightarrow$ (i) is easy. The main point is
(i) $\Rightarrow$ (ii). To prove this,
the scheme is the same as in \S \ref{sec3}. We again use extrapolation starting from the knowledge that Theorem \ref{thm4.1} holds when $p=q$ for some $q$ with $1\le q<2$. Let us fix $p$ with $0<p<1$. For any $q$ with $p\le q\le 2$, we denote
\[
 C'_q(\varphi) = \inf\{\|u_y\colon \ S_2\to S_q\|\}
\]
where the infimum runs over all $y=(y_{ij})$ for which there is $f_i\ge 0$ with $\sum f_i\le 1$ such that $\varphi_{ij} = (f^{\frac1p-\frac1q}_i y_{ij} + y_{ij}f^{\frac1p-\frac1q}_j)/2$. We also denote
\[
 ]\varphi[_p = \inf\{\|\psi\|_{\ell_r(\ell_\infty)} + \|^t\chi\|_{\ell_r(\ell_\infty)}\}
\]
where the infimum runs over all decompositions $\varphi = \psi+\chi$.

Note that $C'_p(\varphi) = \|u_\varphi\colon \ S_2\to S_p\|$. Let $1\le q<2$ and $\frac1q= \frac{1-\theta}p + \frac\theta2$. We have then by the same arguments as in \S \ref{sec1}:\ms 

\n {\bf Step 1$'$:}\ $]\varphi[_p \le C'C'_q(\lambda)$.

\n {\bf Step 2$'$:}\ $C'_2(\varphi) \le C''\ ]\varphi[_p$.

\n {\bf Step 3$'$:}\ $C'_q(\varphi) \le C'''C'_p(\varphi)^{1-\theta} C'_2(\varphi)^\theta$.\ms 

Note that obviously $\|u_y\colon \ S_2\to S_2\| = \sup|y_{ij}|$ so that we have again equivalence in Step 2$'$. To verify Step 3$'$ we argue exactly as for Theorem \ref{thm3.1}. 
\end{proof}
 
\begin{cor}\label{thm4.1+} Let $0<p\le 2\le q\le \infty$. Let $\frac 1r=\frac 1p -\frac 1q$.
With this value of $r$, the properties {\rm (i)} and {\rm (ii)} in the preceding Theorem 
  are equivalent to:
\begin{itemize}
 \item[\rm (i)'] $u_\varphi$ is bounded from $S_q$ to $S_p$.
\end{itemize}
\end{cor}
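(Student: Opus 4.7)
My plan mirrors the structure of Theorem \ref{thm4.1}. The equivalence (ii)$\,\Leftrightarrow\,$(iii) with the new value $\frac{1}{r}=\frac{1}{p}-\frac{1}{q}$ follows by exactly the same elementary argument as in Theorem \ref{thm4.1}, so the real content is establishing (i)$'\,\Leftrightarrow\,$(iii).

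For (ii)$\,\Rightarrow\,$(i)$'$ (the ``easy'' direction), I would use two standard inequalities. Since $0<p/2\le 1$, operator concavity of $t\mapsto t^{p/2}$ gives $(A^{p/2})_{ii}\le A_{ii}^{p/2}$ for $A\ge 0$, so $\|M\|_p^p = \text{tr}((MM^*)^{p/2}) \le \sum_i ((MM^*)_{ii})^{p/2} = \sum_i(\sum_j|M_{ij}|^2)^{p/2}$; since $q/2\ge 1$, operator convexity of $t\mapsto t^{q/2}$ dually gives $\sum_i(\sum_j|x_{ij}|^2)^{q/2} \le \|x\|_q^q$. Setting $g_i:=\sup_j|\psi_{ij}|$, so that $|\psi_{ij}|\le g_i$ and $\sum_i g_i^r<\infty$, the first inequality applied to $M=u_\psi(x)$ yields $\|u_\psi(x)\|_p^p \le \sum_i g_i^p(\sum_j|x_{ij}|^2)^{p/2}$; H\"older on the index $i$ with conjugate exponents $r/p,q/p$ (which sum to $1$ by our relation on $r$), combined with the second inequality, then gives $\|u_\psi(x)\|_p \le (\sum_i g_i^r)^{1/r}\|x\|_q$. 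A symmetric argument (with columns replacing rows) handles $\chi$, and the triangle (or $p$-triangle) inequality combines the two pieces.

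For (i)$'\,\Rightarrow\,$(iii) (the harder direction), I would test $u_\varphi$ against ``partial matching'' matrices $x_\sigma = \sum_i c_i e_{i,\sigma(i)}$, where $\sigma$ is a partial injection $\mathbb{N}\to\mathbb{N}$. Since $x_\sigma$ is diagonal up to permutation, its singular values are the $|c_i|$, giving $\|x_\sigma\|_{S_q} = \|c\|_{\ell_q}$ and $\|u_\varphi(x_\sigma)\|_{S_p} = (\sum_i|\varphi_{i,\sigma(i)}|^p|c_i|^p)^{1/p}$. Optimizing over $c$ by $\ell^{q/p}$ duality (using $1/r=1/p-1/q$) yields the ``permutation bound''
\[
\Big(\sum_i|\varphi_{i,\sigma(i)}|^r\Big)^{1/r} \le \|u_\varphi\|_{S_q\to S_p}\qquad\text{for every partial injection }\sigma.
\]
By LP duality for the assignment problem applied to the non-negative matrix $(|\varphi_{ij}|^r)_{ij}$ (via finite truncation and a weak-$*$ compactness argument in the infinite case), the uniform permutation bound is equivalent to the existence of $(u_i),(v_j)\ge 0$ with $|\varphi_{ij}|^r \le u_i+v_j$ and $\sum_i u_i+\sum_j v_j \le \|u_\varphi\|_{S_q\to S_p}^r$. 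Using $(u_i+v_j)^{1/r}\le K_r(u_i^{1/r}+v_j^{1/r})$ (with $K_r=1$ if $r\ge 1$ and $K_r=2^{1/r-1}$ if $r<1$) and setting $f_i:=u_i+v_i$, one obtains (iii) up to a constant depending only on $r$.

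The main obstacle is the LP-duality/compactness step in the infinite-dimensional setting, though this is handled routinely by truncation plus weak-$*$ compactness in $(c_0)^*=\ell^1$. A fully in-paper alternative, more aligned with the methodology of this paper, is to run the three-step extrapolation scheme of Theorem \ref{thm4.1} verbatim with $S_q$ replacing $S_2$; the analogue of Step~3$'$ (H\"older interpolation for Schur multipliers) then goes through as in the proof of Theorem \ref{thm3.1}, via the condition $(\gamma',\gamma'')$ and Lemma \ref{lem1.6}.
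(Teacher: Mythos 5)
Your proof is correct, but it takes a genuinely different route from the paper's. The paper proves (i)$'\Rightarrow$(iii) by a ``soft'' factorization argument: since $S_p$ has cotype~$2$, the non-commutative little Grothendieck theorem of \cite{P1} shows $u_\varphi$ factors through a Hilbert space; an averaging argument (as in \cite{PS}) realizes this factorization by Schur multipliers $\varphi=\varphi_1\varphi_2$ with $\varphi_1\colon S_2\to S_p$ and $\varphi_2\colon S_q\to S_2$ bounded; then Theorem \ref{thm4.1} is applied to $\varphi_1$, the results of \cite{PS,X} to $\varphi_2$, and the two pieces are recombined via the elementary inequality $f^{\frac1p-\frac12}g^{\frac12-\frac1q}\le c\bigl(f^{\frac1p-\frac1q}+g^{\frac1p-\frac1q}\bigr)$. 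Your argument instead tests $u_\varphi$ directly on matching matrices $x_\sigma=\sum_i c_i e_{i,\sigma(i)}$ to extract the permutation bound $\sup_\sigma\bigl(\sum_i|\varphi_{i\sigma(i)}|^r\bigr)^{1/r}\le\|u_\varphi\|_{S_q\to S_p}$, then applies LP duality for the assignment problem to the nonnegative kernel $(|\varphi_{ij}|^r)$ (with a truncation/compactness passage to the infinite case) to produce the additive majorant $u_i+v_j\ge|\varphi_{ij}|^r$, whence (iii). This bypasses the cotype-$2$/little-GT machinery entirely and is essentially self-contained and elementary; the paper's route is shorter given the ambient machinery and dovetails with the rest of the section. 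One small correction: where you invoke ``operator convexity'' of $t\mapsto t^{q/2}$ to get $\sum_i\bigl(\sum_j|x_{ij}|^2\bigr)^{q/2}\le\|x\|_q^q$, note that $t^{q/2}$ is \emph{not} operator convex for $q>4$; what you actually need (and what holds for all $q\ge2$) is just ordinary scalar convexity together with Jensen's inequality for the state $A\mapsto A_{ii}$, since the target is scalar-valued. The rest of the argument, including the Birkhoff/K\"onig step, the passage to $f_i=u_i+v_i$, and the constant $K_r$ for $r<1$, is sound.
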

\begin{proof} Assume (i)'. Since $S_p$ has cotype 2 (\cite{X2}), $u_\varphi$ factors through a Hilbert space by \cite{P1}. By an elementary averaging argument (see e.g. \cite{PS}), the factorization can be achieved using only Schur multipliers. Thus we must have 
$\varphi=\varphi_1\varphi_2$ with
$\varphi_1$ (resp. $\varphi_2$) bounded
from $S_2$ to $S_p$ (resp. $S_q$ to $S_2$).
If we now apply Theorem \ref{thm4.1}  (resp. the results
of \cite{PS,X})  to $\varphi_1$ (resp. $\varphi_2$),
and use an arithmetic/geometric type  inequality of the form
$f^{\frac1p-\frac12} g^{\frac12-\frac1q}\le c(f^{\frac1p-\frac1q} +g^{\frac1p-\frac1q})$ for all $f,g\ge 0$,
we obtain   (iii).  The other implications are easy.
\end{proof}
\begin{prb} Characterize the bounded Schur multipliers   from $S_q$ to $S_p$ when $p<q<2$ or when $2<p<q\le \infty$.
\end{prb}
\n Some useful information on this problem can be derived from \cite{JP}. The difficulty      is due to the fact that, except when $q=1,2,\infty$, we have no characterization of the bounded Schur multipliers   on $S_q$.

\begin{rk} By general results, actually Theorem \ref{thm4.1} implies Theorem \ref{thm3.1}. Indeed, the same idea as in \cite{LPP} can be used to see this.
Moreover, as pointed out by Q. Xu, the converse implication is also easy: just observe that,
by Theorem \ref{thm3.1}, any Schur multiplier
bounded  from $S_2$ to $S_p$ must
be a bounded ''multiplier" from $\ell_2(\bb N\times \bb N)$ to $\ell_p(\ell_2 )+  {}^t \ell_p(\ell_2 ) $. Then
a well known variant of Maurey's classical factorization
yields (ii) or (iii) in Theorem \ref{thm4.1}.
 
\end{rk}

Although the recent paper \cite{JP} established several
important factorization theorems for
maps between non-commutative $L_p$-spaces,
there seems to be some extra difficulty to extend the Maurey factorization theorem  when $0<p<1$. The next result points to the obstacle. To avoid technicalities we again restrict to the finite dimensional case, so we assume $(M,\tau)$ as before but with $M$ finite dimensional. For any $\vp>0$, we denote
\[
 {\cl D}_\vp = \{f\in {\cl D}\mid f\ge \vp1\}.
\]
For any $x$ in $M$, we let
\[
 T(x)y = xy+yx.
\]
Note that if $x>0$ then $T(x)$ is an isomorphism on $M$ so that $T(x)^{-1}$ makes sense. \\

Let $B$ be any Banach space. Given a linear map $u\colon\ B\to L_p(\tau)$, we denote by $M_p(u)$ the smallest constant $C$ such that for any finite sequence $(x_j)$ in $B$
\[
 |||(ux_j)|||_p\le C\left(\sum \|x_j\|^2\right)^{1/2}.
\]
We denote by ${\cl M}_p(u)$ the smallest constant $C$ such that there is $\vp>0$ and a probability $\lambda$ on ${\cl D}_\vp$ such that
\begin{equation}\label{eq4.1}
 \forall x\in B\qquad\quad \int\|T(f^{\frac1r})^{-1} ux\|^2_2 d\lambda(f) \le C^2\|x\|^2
\end{equation}
where (as before) $\frac1r = \frac1p-\frac12$.

We have then

\begin{pro}\label{pro4.4}
 There is a constant $\beta>0$ such that for any $u$ as above we have
\[
\frac1\beta {\cl M}_p(u) \le M_p(u) \le {\cl M}_p(u).
\]
\end{pro}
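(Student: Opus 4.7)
The plan is to prove the two inequalities separately; the easy one is essentially a Fubini plus averaging, and the hard one is a Hahn--Banach/minimax argument in the style of Maurey's factorization theorem.

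For the upper bound $M_p(u)\le \mathcal M_p(u)$ (up to an absolute factor absorbed into $\beta$), I would take a witness $(\varepsilon,\lambda)$ for $\mathcal M_p(u)\le C$, apply \eqref{eq4.1} to a finite sequence $(x_j)$ in $B$, and sum and integrate by Fubini to extract a single $f\in\mathcal D_\varepsilon$ with $\sum_j\|T(f^{1/r})^{-1}ux_j\|_2^2\le C^2\sum_j\|x_j\|^2$. Setting $y_j=2T(f^{1/r})^{-1}ux_j$ realises the decomposition $ux_j=(f^{1/r}y_j+y_jf^{1/r})/2$ with $(\sum\|y_j\|_2^2)^{1/2}\le 2C(\sum\|x_j\|^2)^{1/2}$, and the easy inequality $|||\cdot|||_p\le C_2(\cdot)$ of Remark \ref{rem1.22} closes this direction.

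For the reverse inequality $\mathcal M_p(u)\le \beta M_p(u)$, normalise $M_p(u)\le 1$. The crucial input is Step \ref{stp2} of Theorem \ref{thm1.1}: for every finite sequence $(x_j)$ in $B$ it delivers $f\in\mathcal D$ and $y_j\in L_2(\tau)$ with $ux_j=(f^{1/r}y_j+y_jf^{1/r})/2$ and $\sum\|y_j\|_2^2\le C_0^2\sum\|x_j\|^2$ for an absolute $C_0$, i.e.\ $\sum\|T(f^{1/r})^{-1}ux_j\|_2^2\le (C_0/2)^2\sum\|x_j\|^2$. A direct inspection of the construction in Step \ref{stp2} shows that on choosing $f_0=\mathbf 1/\tau(\mathbf 1)$ and taking the auxiliary parameter $\varepsilon_1$ to be an absolute constant, the density $f'=(\varepsilon_1 f_0^{2/p}+\sum a_j^*a_j+\sum b_jb_j^*)^{1/2}$ has $\|f'\|_p$ bounded by an absolute constant and $f'\ge c\mathbf 1$, so the resulting $f=(f')^p/\tau((f')^p)$ lies in $\mathcal D_{\varepsilon_0}$ for a single $\varepsilon_0>0$ depending only on $p$ and $\dim M$. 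This is the point where finite-dimensionality is used.

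With $K=\mathcal D_{\varepsilon_0}$ now compact and convex, I would introduce the convex cone
\[
\mathcal C=\bigl\{F_{(x_j)}:(x_j)\subset B\text{ finite}\bigr\}\subset C(K),\qquad F_{(x_j)}(f)=\sum_j\|T(f^{1/r})^{-1}ux_j\|_2^2-\beta^2\sum_j\|x_j\|^2,
\]
with $\beta$ a suitable multiple of $C_0$. The preceding paragraph says that every element of $\mathcal C$ attains a value $\le 0$ on $K$, so $\sup_{F\in\mathcal C}\inf_{f\in K}F(f)\le 0$. Applying Sion's (or Ky Fan's) minimax theorem to the bilinear pairing $\int_K F\,d\lambda$ on $\mathcal C\times \mathcal P(K)$ (with $\mathcal P(K)$ weak-$*$ compact), one exchanges the inf and sup to obtain a probability $\lambda$ on $K$ with $\int_K F\,d\lambda\le 0$ for every $F\in\mathcal C$. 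Specialising to the one-term sequence $(x)$ yields \eqref{eq4.1} with constant $\beta$, whence $\mathcal M_p(u)\le \beta M_p(u)$.

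The main obstacle is precisely securing the uniform lower bound $f\ge \varepsilon_0\mathbf 1$ on the densities produced by Step \ref{stp2}: without it one would only obtain a probability on the non-compact set $\mathcal D$, on which the operators $T(f^{1/r})^{-1}$ need not be bounded and the functions $F_{(x_j)}$ need not be continuous or even finite. This is exactly the difficulty alluded to before the proposition, and the reason the statement is confined to the finite-dimensional setting.
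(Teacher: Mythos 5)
Your proof is correct and takes essentially the same route as the paper: the paper packages the two halves of your argument into its single inequality \eqref{eq4.2} (the left estimate coming from Remark \ref{rem1.22}, the right from the construction in Step \ref{stp2} together with the uniform spectral lower bound on the density produced there), and then invokes a standard Hahn--Banach compactness argument (citing Exercise~2.2.1 in \cite{P2}) where you invoke Sion's minimax theorem, the two devices being interchangeable. Your explicit verification that the density from Step \ref{stp2} can be placed in $\mathcal D_{\varepsilon_0}$ for a single $\varepsilon_0>0$ depending on $p$ and $\dim M$ is precisely what the paper summarizes with the phrase ``if $\varepsilon$ is chosen small enough (compared to $\dim(M)$).''
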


\begin{proof}
The main point is to observe that if $\vp$ is chosen small enough (compared to $\dim(M)$) we have for any finite sequence $y=(y_j)$ in $L_p(\tau)$
\begin{equation}\label{eq4.2}
 |||(y_j)|||_p \le \inf_{f\in {\cl D}_\vp} \left(\sum\|T(f^{\frac1r})^{-1} y_j\|^2_2\right)^{1/2} \le \beta|||(y_j)|||_p
\end{equation}
where $\beta$ is a fixed constant, independent of the dimension of $M$. 

Then $M_p(u) \le {\cl M}_p(u)$ follows immediately. To prove the converse, assume $M_p(u)\le 1$. Then by \eqref{eq4.2} we have
\[
\inf_{y\in {\cl D}_\vp} \sum\|T(f^{\frac1r})^{-1} u(x_j)\|^2_2 \le \beta^2 \sum\|x_j\|^2.
\]
By a well known Hahn--Banach type argument (see e.g.\ Exercise 2.2.1 in \cite{P2}), there is a net $(\lambda_i)$ on ${\cl D}_\vp$ such that
\begin{equation}
\lim_i \int \|T(f^{\frac1r})^{-1} u(x)\|^2_2\ d\lambda_i(f) \le \beta^2\|x\|^2.\tag*{$\forall x\in B$}
\end{equation}
We may as well assume that the net corresponds to an ultrafilter. Setting $\lambda = \lim\lambda_i$, we obtain \eqref{eq4.1} and hence ${\cl M}_p(u)\le \beta$.
\end{proof}

\begin{rem}\label{rem4.5}
Now assume $1\le p < 2$. Note then that $\frac1r = \frac1p-\frac12$ satisfies $-1\le -\frac2r = 1-\frac2p < 0$. Therefore the function $t\to t^{-\frac2r}$ is \emph{operator convex} (see e.g. \cite[p. 123]{Bh}.
 Using this and assuming   ${\cl M}_p(u)\le 1$, we claim that  there is, for some $\vp>0$, a density $F$ in ${\cl D}_\vp$ such that
\begin{equation}\label{eq4.3}
\|T(F^{1/r})^{-1}ux\|_2 \le \beta'\|x\|\qquad \forall x\in H.
\end{equation}
Indeed we first observe that  
\begin{align}\label{eq4.4}
 \|T(f^{1/r})^{-1}y\|^2_2 &\simeq \|T(f)^{-\frac1r}y\|^2_2\\
&= \langle T(f)^{-\frac2r}y,y\rangle\notag
\end{align}
where $\simeq$ means that the (squared) norms   are equivalent
with equivalence constants depending only on $r$, and hence if we set
\[
 F = \int f\ d\lambda(f)
\]
we deduce from \eqref{eq4.1} that, for some constant $c$, we have
\[
 \langle T(F)^{-2/r} ux,ux\rangle \le c\|x\|^2
\]
and hence using \eqref{eq4.4} again we obtain \eqref{eq4.3}.  
\end{rem}
Note that
if $B$ is Hilbertian any bounded linear $u$ from $B$ to 
$L_p(\tau)$ satisfies the factorization of the form  \eqref{eq4.3}  if $1\le p\le 2$. This follows immediately by duality from
either \cite{LP2} or  \cite{LPX}.

 However,  what happens for $0<p<1$ is unclear :\ Can we still get rid of $\lambda$ as in the preceding remark?

\section{A non-commutative Kahane inequality}\label{sec5}

In vector-valued probability theory, the following inequalities due to Kahane (see \cite{Ka}) play an important role. For any $0<p<q<\infty$, there is a constant $K(p,q)$ such that for any Banach space $X$ and any finite sequence $(x_k)$ of elements of $X$ we have
\begin{equation}\label{eq5.1}
 \left\|\sum r_kx_k\right\|_{L_q(X)} \le K(p,q) \left\|\sum r_kx_k\right\|_{L_p(X)}
\end{equation}
where $(r_k)$ denotes as before the Rademacher functions.

As observed by C.~Borell (see \cite{Bo}) Kahane's result can be deduced from the hypercontractive inequality for the semi-group $T(t)$ defined on $L_2([0,1])$ by $T(t) \prod\limits_{k\in A} r_k = e^{-t|A|} \prod\limits_{k\in A} r_k$ for any finite set $A\subset {\bb N}$. The hypercontractivity says that if $1<p<q<\infty$ and if $e^{-2t}\le (p-1)(q-1)^{-1}$ then $\|T(t)\colon \ L_p\to L_q\|=1$. Since $T(t)\ge 0$ for all $t\ge 0$, this implies that for any Banach space $X$, we also have
\[
 \|T(t)\colon \ L_p(X) \to L_q(X)\|=1.
\]
In particular, if $S = \sum r_kx_k$ then $T(t)S = e^{-t}S$ and hence we find
\[
 \|S\|_{L_q(X)} \le (q-1)^{1/2} (p-1)^{-1/2}\|S\|_{L_p(X)},
\]
which yields \eqref{eq5.1} for $p>1$ (and the case  $0<p\le 1$ can be easily deduced from this using H\"older's inequality).

The goal of this section is to remark that this approach is valid mutatis mutandis in the ``anti-symmetric'' or Fermionic setting considered in \cite{CL}. Let $(M,\tau)$ be a von~Neumann algebra equipped with a faithful normal trace $\tau$ such that $\tau(1)=1$. Let $\{Q_k\mid k\ge 0\}$ be a spin system in $M$. By this we mean that $Q_k$ are self-adjoint unitary operators such that
\begin{equation}
 Q_kQ_\ell = - Q_\ell Q_k.\tag*{$\forall k\ne \ell$}
\end{equation}
For any finite set $A\subset {\bb N}$, ordered so that $A = \{k_1,\ldots, k_m\}$ with $ k_1 < k_2 <\cdots< k_m $, we set 
\[
 Q_A = Q_{k_1} Q_{k_2}\ldots Q_{k_m},
\]
with the convention
\[
 Q_\phi = 1.
\]
We will assume that $M$ is generated by $\{Q_k\}$. In that case, $M$ is the so-called hyperfinite factor of type $II_1$, i.e. the non-commutative analogue of
the Lebesgue interval $[0,1]$. 

Let $V(t)\colon \ L_2(\tau) \to L_2(\tau)$ be the semi-group defined for all $A\subset {\bb N}$ $(|A|<\infty)$ by
\[
 V(t)Q_A = e^{-t|A|}Q_A.
\]
Carlen and Lieb \cite{CL} observed that the semi-group $V(t)$ is completely positive (see \cite[(4.2) p. 36]{CL}) and proved that if $e^{-2t}\le (p-1)(q-1)^{-1}$
\[
 \|V(t)\colon\ L_p(\tau) \to L_q(\tau)\| = 1.
\]
We take the occasion of this paper to point out that the Kahane inequality remains valid in this setting provided one works with the ``vector-valued non-commutative $L_p$-spaces'' $L_p(\tau;E)$ introduced in \cite{P4}. Here $E$ is an operator space, i.e.\ $E\subset B(H)$ for some Hilbert space $H$, and $L_p(\tau;E)$ is defined as the completion if $L_p(\tau)\otimes E$ for the norm denoted by $\|\cdot\|_{L_p(\tau;E)}$ defined as follows.

For any $f$ in the algebraic tensor product  $L_p(\tau)\otimes E$
\begin{equation}\label{eq5.1+}
 \|f\|_{L_p(\tau;E)} = \inf\{\|a\|_{L_{2p}(\tau)} \|b\|_{L_{2p}(\tau)}\}
\end{equation}
where the infimum runs over all possible factorizations of $f$ of the form
\begin{equation}\label{eq5.2}
 f = a\cdot g\cdot b
\end{equation}
with $g\in M\otimes E$ such that
\[
 \|g\|_{M\otimes_{\min}E}\le 1.
\]
In \eqref{eq5.2}, the map $(a,g,b) \to a\cdot g\cdot b$ is obtained by linear extension from 
\[
 (a,(m\otimes e),b) \to amb\otimes e.
\]
Our observation boils down to the following.

\begin{lem}\label{lem5.1}
 If $T\colon \ L_p(\tau)\to L_q(\tau)$ $(1\le p, q\le\infty)$ is completely positive and bounded, then for any operator space $E\ne \{0\}$ the operator $T\otimes id_E$ extends to a bounded operator from $L_p(\tau;E)$ 
 to $L_q(\tau;E)$ such that
\[
 \|T\otimes id_E\colon \ L_p(\tau;E) \to L_q(\tau;E)\|  = \|T\colon\ L_p(\tau)\to L_q(\tau)\|.
\]
\end{lem}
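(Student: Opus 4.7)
The lower bound $\|T \otimes \mathrm{id}_E\| \geq \|T\|$ is immediate: pick $e \in E$ with $\|e\|_E = 1$ (possible since $E \neq \{0\}$) and, by Hahn--Banach, a functional $\varphi \in E^*$ with $\varphi(e) = \|\varphi\| = 1$. The factorization definition \eqref{eq5.1+}--\eqref{eq5.2} gives $\|x \otimes e\|_{L_r(\tau;E)} \leq \|x\|_{L_r(\tau)}$ for any $r$, and conversely the contraction $\mathrm{id}_{L_r(\tau)} \otimes \varphi : L_r(\tau;E) \to L_r(\tau)$ recovers $x$ from $x \otimes e$, so $x \mapsto x \otimes e$ is an isometric embedding. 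Restricting $T \otimes \mathrm{id}_E$ to $\{x \otimes e : x \in L_p(\tau)\}$ recovers $T$, which gives the lower bound.

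For the upper bound, the first step is to handle the $C^*$-case $p = q = \infty$, where $L_\infty(\tau; E) = M \otimes_{\min} E$. Stinespring's theorem applied to the CP map $T : M \to M$ gives $T(x) = V^* \pi(x) V$ with $\pi : M \to B(K)$ a $*$-representation and $V$ bounded, $\|V\|^2 = \|T\|$. Then for $y \in M \otimes E$,
\[
  (T \otimes \mathrm{id}_E)(y) = (V \otimes 1_E)^* (\pi \otimes \mathrm{id}_E)(y)\, (V \otimes 1_E),
\]
and since $\pi \otimes \mathrm{id}_E$ is a $*$-homomorphism into $B(K) \otimes_{\min} E$ it is contractive on the minimal tensor product, giving $\|(T \otimes \mathrm{id}_E)(y)\|_{\min} \leq \|V\|^2 \|y\|_{\min} = \|T\|\,\|y\|_{\min}$. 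By duality (applied to the predual map $T_*$, which is CP and has the same norm as $T$), the same bound holds in the case $p = q = 1$.

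The general case should then follow by passing through the factorization definition \eqref{eq5.1+}. Given a near-optimal factorization $f = a g b$ with $a, b \in L_{2p}(\tau)$ and $g \in M \otimes E$, $\|g\|_{\min} \leq 1$, the plan is to produce a factorization $(T \otimes \mathrm{id}_E)(f) = \alpha \cdot h \cdot \beta$ with $\alpha, \beta \in L_{2q}(\tau)$, $h \in M \otimes E$, and $\|\alpha\|_{2q}\, \|h\|_{\min}\, \|\beta\|_{2q} \leq \|T\|\, \|a\|_{2p}\, \|b\|_{2p}$; this would be obtained from a Stinespring-type dilation $T(\,\cdot\,) = c_1 \pi(\,\cdot\,) c_2$ with $c_1, c_2$ acting as multipliers sending $L_{2p}$-elements into $L_{2q}$-elements and $\pi$ a $*$-representation, combined with the contractivity of $\pi \otimes \mathrm{id}_E$ established above. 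The main obstacle is engineering this dilation rigorously when $p \neq q$: the construction of $c_1, c_2$ with the correct $L_{2p} \to L_{2q}$ multiplier behaviour is delicate. A cleaner workaround, which suffices for the Carlen--Lieb semigroup $V(t)$ motivating this section (since it is CP and bounded on every $L_r$-endpoint), is complex interpolation: the $\infty \to \infty$ bound above together with its $1 \to 1$ dual version, combined with the isometric interpolation identity $L_r(\tau; E) = [L_\infty(\tau; E), L_1(\tau; E)]_{1/r}$ from \cite{P4}, yields the desired bound at all intermediate exponents.
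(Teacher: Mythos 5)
Your lower bound argument is fine, and your Stinespring argument for $p=q=\infty$ (with the dual version for $p=q=1$) is correct as far as it goes. The problem is the $p\ne q$ case, which you explicitly flag as an obstacle and then try to bypass by interpolation. That bypass does not work: the interpolation identity $L_r(\tau;E)=(L_\infty(\tau;E),L_1(\tau;E))_{1/r}$ together with the endpoint bounds at $(\infty,\infty)$ and $(1,1)$ only yields $T\otimes \mathrm{id}_E$ bounded on $L_r(\tau;E)\to L_r(\tau;E)$ for each $r$, i.e.\ the diagonal case $p=q$. It cannot produce a cross-exponent bound $L_p(\tau;E)\to L_q(\tau;E)$ with $p<q$. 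Yet this off-diagonal case is precisely what Theorem \ref{thm5.2} requires, since hypercontractivity concerns $V(t)\colon L_p\to L_q$ with $p<q$; knowing that $V(t)\otimes\mathrm{id}_E$ is a contraction on each $L_r(\tau;E)$ separately does not recover the inequality $\|V(t)\otimes\mathrm{id}_E\colon L_p(\tau;E)\to L_q(\tau;E)\|\le 1$. So your ``cleaner workaround'' proves a strictly weaker statement than the lemma and does not suffice for the intended application.

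The paper's proof avoids Stinespring and interpolation entirely. It reduces by density to $M$ finite-dimensional (matrix algebras), writes the c.p.\ map in Kraus form $T(x)=\sum a_j^*xa_j$, and then, starting from a factorization $f=a\cdot g\cdot b$ with $\|a\|_{2p},\|b\|_{2p}<1$, $\|g\|_{\min}<1$, builds a new factorization of $(T\otimes\mathrm{id}_E)(f)$ with outer factors $(\varepsilon 1+\alpha^2)^{1/2}$, $(\varepsilon 1+\beta^2)^{1/2}$ where $\alpha^2=T(a^*a)$, $\beta^2=T(b^*b)$ (hence in $L_{2q}$ with norm $<1$), and an inner factor $\hat g=\sum\alpha_j^*\cdot g\cdot\beta_j$ with $\|\hat g\|_{\min}<1$ because $\sum\alpha_j^*\alpha_j\le 1$ and $\sum\beta_j^*\beta_j\le 1$. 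This treats all $p,q$ uniformly, which is exactly what your Stinespring sketch cannot do. If you want to salvage your approach, you need to execute the Kraus/factorization step you outlined, not substitute interpolation for it.
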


\begin{proof}
 By density, it suffices to prove this when $M$ is generated by a finite subset $\{Q_0, Q_1,\ldots, Q_n\}$, say with even cardinality (i.e.\ $n$ odd). Then $M = M_{2^k}$ with $k=(n+1)/2$ and it is well known (see e.g. \cite{Pau}) that any c.p.\ map $T\colon \ M\to M$ is of the form
\begin{equation}\label{eq5.3}
 T(x) = \sum a^*_j x a_j
\end{equation}
for some finite set $a_j$ in $M$.

Now assume $f\in L_p(\tau)\otimes E$ with $\|f\|_{L_p(\tau;E)}<1$. We can write $f = a^*\cdot g\cdot b$ with $\|a\|_{2p}, \|b\|_{2p}<1$ and $\|g\|_{M\otimes_{\min} E}<1$. Assume $\|T\colon \ L_p(\tau)\to L_q(\tau)\| = 1$. Let $\alpha = \left(\sum a^*_ja^*aa_j\right)^{1/2}$ and $\beta = \left(\sum a^*_jb^*ba_j\right)^{1/2}$. Since $\alpha^2 = T(a^*a)$ and $\beta^2=T(b^*b)$ we have $\|\alpha\|_{2q}<1$ and $\|\beta\|_{2q}<1$. Fix $\vp>0$. We have
\begin{align*}
 aa_j &= \alpha_j\  (\vp 1+\alpha^2)^{1/2}\\
ba_j &= \beta_j\  (\vp1+\beta^2)^{1/2}
\end{align*}
where $\alpha_j =a a_j(\vp1+\alpha^2)^{-1/2}$ and $\beta_j = ba_j(\vp1+\beta^2)^{-1/2}$ satisfy 
\[
 \sum \alpha^*_j \alpha_j = (\vp1+\alpha^2)^{-1/2} \alpha^2(\vp1+\alpha^2)^{1/2} \le 1
\]
and similarly $\sum \beta^*_j\beta_j \le 1$. This implies clearly (by the defining property of an operator space!)
\[
 \left\|\sum \alpha^*_j\cdot g\cdot \beta_j\right\|_{M\otimes_{\min} E} < 1.
\]
We have
\[
 (T\otimes id_E)(f) = (\vp1 + \alpha^2)^{1/2} \hat g(\vp 1+\beta^2)^{1/2}
\]
where $\hat g = \sum \alpha^*_j\cdot g\cdot \beta_j$, and hence we conclude by \eqref{eq5.1+}
\[
 \|(T\otimes id_E)(f)\|_{L_q(\tau;E)} \le \|(\vp1+\alpha^2)^{1/2}\|_{2q} \|(\vp1+\beta^2)^{1/2}\|_{2q} \le (\vp + \|\alpha^2\|_q)^{1/2} (\vp+\|\beta^2\|_q)^{1/2}\le 1+\vp,
\]
and since $\vp>0$ is arbitrary, we obtain the announced result by homogeneity.
\end{proof}

\begin{rem}
Q.~Xu pointed out to me that Lemma \ref{lem5.1} remains valid in the nonhyperfinite case.One can check this using the following fact:\ consider $y$ in $L_p(\tau)\otimes M_n$, then $y\in B_{L_p(\tau;M_n)}$ iff there are $\lambda,\mu$ in $B_{L_p(\tau)}$ such that $\left(\begin{smallmatrix} \lambda&y\\ y^*&\mu\end{smallmatrix}\right) \ge 0$ where $\ge 0$ is meant in $L_p(\tau\times \text{tr})$ (see e.g.\ Exercise 11.5 in \cite{P2} for the result at the root of this fact). A similar statement is valid with $B(H)$ in place of $M_n$.
\end{rem}

\begin{thm}\label{thm5.2}
 Let $1<p<q<\infty$. Assume $e^{-2t} \le (p-1)(q-1)^{-1}$, then for any operator space $E$
\[
 \|V(t)\colon \ L_p(\tau;E) \to L_q(\tau;E)\|\le 1.
\]
Consequently, for any $1\le p<q<\infty$ there is a constant $K'(p,q)$ such that for any $E$ and any finite sequence $x_k$ in $E$ we have
\[
 \left\|\sum Q_k\otimes x_k\right\|_{L_q(\tau;E)} \le K'(p,q) \left\|\sum Q_k\otimes x_k\right\|_{L_p(\tau;E)}.
\]
\end{thm}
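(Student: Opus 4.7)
The first assertion is a direct application of Lemma \ref{lem5.1}. By \cite{CL}, the semigroup $V(t)$ is completely positive on $M$, and under the hypothesis $e^{-2t}\le (p-1)(q-1)^{-1}$ it is a contraction from $L_p(\tau)$ to $L_q(\tau)$. The hypotheses of Lemma \ref{lem5.1} are therefore met, and we conclude
$$\|V(t)\otimes id_E : L_p(\tau;E)\to L_q(\tau;E)\|\le 1$$
for every operator space $E$.

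For the Kahane inequality in the range $1<p<q<\infty$, the key observation is that each $Q_k$ (viewed as $Q_{\{k\}}$) is an eigenvector of $V(t)$ with eigenvalue $e^{-t}$, so
$$(V(t)\otimes id_E)(S) = e^{-t}S\qquad\text{where } S=\sum Q_k\otimes x_k.$$
Choosing $t$ with $e^{-2t}=(p-1)(q-1)^{-1}$, the first assertion gives
$$e^{-t}\|S\|_{L_q(\tau;E)} = \|(V(t)\otimes id_E)(S)\|_{L_q(\tau;E)} \le \|S\|_{L_p(\tau;E)},$$
which yields the desired inequality with $K'(p,q) = \bigl((q-1)/(p-1)\bigr)^{1/2}$.

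To handle the endpoint $p=1$, I would reduce to the case already proved via a Lyapunov-type interpolation inequality. Fix an auxiliary exponent $p_0\in (1,q)$ and let $\lambda\in(0,1)$ be determined by $1/p_0 = \lambda + (1-\lambda)/q$. By complex interpolation of the vector-valued non-commutative $L_p$-spaces introduced in \cite{P4} (for which $[L_1(\tau;E),L_q(\tau;E)]_{1-\lambda}=L_{p_0}(\tau;E)$), one has
$$\|S\|_{L_{p_0}(\tau;E)}\le \|S\|_{L_1(\tau;E)}^{\lambda}\,\|S\|_{L_q(\tau;E)}^{1-\lambda}.$$
Combining this with the inequality already proved for $(p_0,q)$, namely $\|S\|_{L_q(\tau;E)}\le K'(p_0,q)\|S\|_{L_{p_0}(\tau;E)}$, and using that $\|S\|_{L_q(\tau;E)}<\infty$ (since $S$ is a finite sum and therefore lies in $L_\infty(\tau;E)$), a routine rearrangement absorbs the factor $\|S\|_{L_q(\tau;E)}^{1-\lambda}$ into the left-hand side and produces $\|S\|_{L_q(\tau;E)}\le K'(p_0,q)^{1/\lambda}\|S\|_{L_1(\tau;E)}$. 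This gives the inequality with $K'(1,q)=K'(p_0,q)^{1/\lambda}$.

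The only place where the proof requires anything beyond Lemma \ref{lem5.1} and the eigenvalue calculation is the Lyapunov step for $p=1$; this is the likely obstacle, but it is handled cleanly by the complex interpolation theorem for the $L_p(\tau;E)$-scale, which is already available in \cite{P4}. The rest of the argument is essentially the Borell scheme transported into the Fermionic operator-space setting.
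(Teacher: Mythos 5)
Your proof is correct and follows essentially the same route as the paper: the first assertion via Lemma \ref{lem5.1} and \cite{CL}, the $1<p<q$ case via the eigenvalue identity $V(t)S=e^{-t}S$, and the endpoint $p=1$ via the complex interpolation isometry $L_{p_0}(\tau;E)=(L_1(\tau;E),L_q(\tau;E))_\theta$ from \cite{P4} followed by the rearrangement trick. The only cosmetic difference is that you introduce an explicit auxiliary exponent $p_0$ and parameter $\lambda$ rather than reusing $p$ and $\theta$, which if anything makes the bookkeeping clearer.
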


\begin{proof}
The first part follows from the preceding Lemma by \cite{CL}. Let $f = \sum Q_k\otimes x_k$. In particular, if $1<p<q<\infty$ we have
\begin{equation}\label{eq6.11}
 \|f\|_{L_q(\tau;E)} \le (q-1)^{1/2} (p-1)^{-1/2} \|f\|_{L_p(\tau;E)}.
\end{equation}
Let $0<\theta <1$ be defined by
\[
 \frac1p = \frac{1-\theta}1 + \frac\theta{q}.
\]
By \cite[p. 40]{P4} we have isometrically
\[
 L_p(\tau;E) = (L_1(\tau;E), L_q(\tau;E))_\theta
\]
and hence
\[
 \|f\|_{L_p(\tau;E)} \le \|f\|^{1-\theta}_{L_1(\tau;E)} \|f\|^\theta_{L_q(\tau;E)},
\]
which when combined with \eqref{eq6.11}  yields 
\[
 \|f\|_{L_q(\tau;E)} \le ((q-1)^{1/2}(p-1)^{-1/2})^{\frac1{1-\theta}} \|f\|_{L_1(\tau;E)}.\qquad\qed
\]
\renewcommand{\qed}{}\end{proof}

\begin{rk} Obviously  Theorem \ref{thm5.2} is also valid for   other   hypercontractive semi-groups, as the ones in \cite{Bi}.

\end{rk}

\section{Appendix}\label{sec10}

The main technical difficulty in our proof of Step 3 above is \eqref{eq1.6}.
We will first show how this follows from 
Theorem 1.1 in \cite{JP}. We will then also outline a direct more self-contained argument.

 Let $(M,\tau)$ be a generalized (possibly non-commutative) measure space, with associated space $L_p(\tau)$.
 Since it is easy to pass from the finite
 to the semifinite case, we assume $\tau$ finite. Consider a density $f>0$ in $M$ with $\tau(f) = 1$, with finite spectrum, i.e.   we assume that $f = \sum^N_1 f_jQ_j$ where   $0<f_1\le f_2\le...\le f_N$,  $1 = \sum^N_1 Q_j$ and $Q_j$ are 
 mutually orthogonal projections in $M$. We now introduce for any $x$ in $L_p(\tau)$ ($1\le p\le \infty$)
\begin{equation}\label{eq10.1}
\|x\|_{L_p(f)} = \|f^{\frac1p}x\|_{L_p(\tau)} + \|xf^{\frac1p}\|_{L_p(\tau)}.
\end{equation}
We will denote by $L_p(f)$ the space $L_p(\tau)$ equipped with the norm $\|\cdot\|_{L_p(f)}$. Then   \cite[Th. 1.1]{JP} implies in particular that for any $0<\theta<1$ and any $1<p<\infty$ we have
\begin{equation}\label{eq10.2}
 L_{p(\theta)}(f) \simeq (M,L_p(f))_\theta
\end{equation}
where $p(\theta)^{-1} = \frac{1-\theta}\infty + \frac\theta{p} = \theta/p$, and where $\simeq$ means that the norms on both sides are equivalent with equivalence constants depending only on $p$ and $\theta$. Note that by the triangle inequality and by Lemma \ref{lem1.6} (ii), we have
\begin{equation}\label{eq10.3}
\|f^{\frac1p}x + xf^{\frac1p}\|_{L_p(\tau)} \le \|x\|_{L_p(f)} \le 2t(p)\|f^{\frac1p}x + xf^{\frac1p}\|_{L_p(\tau)}.
\end{equation}
Let us denote
\[
 T(f)x = fx + xf.
\]
With this notation, the dual norms
\[
\|x\|_{L_p(f)^*} = \sup \{|\tau(xy)| \ \big| \  \|y\|_{L_p(f)}\le 1\}
\]
satisfy for any $x$ in $L_{p'}(\tau)$ the following dual version to \eqref{eq10.3}
\begin{equation}\label{eq10.4}
 (2t(p))^{-1} \|T(f^{\frac1p})^{-1}x\|_{L_{p'}(\tau)} \le \|x\|_{L_p(f)^*} \le \|T(f^{\frac1p})^{-1}x\|_{L_{p'}(\tau)}.
\end{equation}
Note that with our simplifying assumptions on $f$, $T(f)$ is an isomorphism on $L_p(\tau)$.

Here and in the sequel we will denote by $c_1,c_2,\ldots$ constants depending only on $p$ and $\theta$.\\
Recall (see e.g.\ \cite{BL}) that we have isometrically for any $0<\theta<1$
\[
 (M,L_p(f))^*_\theta = (L_1(\tau), L_p(f)^*)_\theta.
\]
Therefore \eqref{eq10.2} implies in particular that for any $x$ in $L_{p'}(\tau)$ 
\begin{equation}\label{eq10.5}
 \|x\|_{L_{p(\theta)}(f)^*} \le c_1\|x\|^{1-\theta}_{L_1(\tau)} \|x\|^\theta_{L_p(f)^*}.
\end{equation}
 Using \eqref{eq10.4}, \eqref{eq10.5} implies
\begin{equation}\label{eq10.5a}
\|T(f^{\frac\theta{p}})^{-1}(x)\|_{L_{p(\theta)'}(\tau)} \le c_2 \|x\|^{1-\theta}_{L_1(\tau)} \|T(f^{\frac1p})^{-1}x\|^\theta_{L_{p'}(\tau)}.
\end{equation}
  In Step 3 of the present paper, we used the special case $p=2$. If we denote $q=p(\theta)'$ we have $\frac1q = \frac{1-\theta}1  + \frac\theta2$ so that \eqref{eq10.5a} becomes
\begin{equation}\label{eq10.6}
 \|T(f^{\frac\theta2})^{-1}(x)\|_q \le c_2\|x\|^{1-\theta}_1 \|T(f^{\frac12})^{-1}x\|^\theta_2,
\end{equation}
and we obtain \eqref{eq1.6} for $p=1$. The case $1<p<2$ can   be derived by the same argument, but
this is anyway much easier because of the simultaneous boundedness on $L_p$ and $L_2$
of the triangular projection.
$\hfill\square$

For the convenience of the reader, we now give a direct argument, based on the same ideas as \cite{JP}. We want to show \eqref{eq10.6}. Note that it is equivalent to (change $x$ to $T(f^{\frac12}y$)) : \ for all $y$ in $M$
\begin{equation}\label{eq10.7}
 \|T(f^{\frac\theta2})^{-1} T(f^{\frac12})y\|_q \le c_4 \|T(f^{\frac12})y\|^{1-\theta}_1 \|y\|^\theta_2.
\end{equation}
By the triangle inequality and by Lemma \ref{lem1.6} (ii) we have
\[
\|T(f^{\frac\theta2})^{-1} T(f^{\frac12})y\|_q \le \|T(f^{\frac\theta2})^{-1} f^{\frac\theta2} f^{\frac{1-\theta}2}y\|_q + \|T(f^{\frac\theta2})^{-1} yf^{\frac{1-\theta}2} f^{\frac\theta2}\|_q \le t(q)(\|f^{\frac{1-\theta}2}y\|_q + \|yf^{\frac{1-\theta}2}\|_q).
\]
Therefore to show \eqref{eq10.6} (or \eqref{eq10.7}) it suffices to show
\begin{equation}\label{eq10.8}
 \|f^{\frac{1-\theta}2}y\|_q + \|yf^{\frac{1-\theta}2}\|_q \le c_6\|f^{\frac12}y + yf^{\frac12}\|^{1-\theta}_1 \|y\|^\theta_2.
\end{equation}
Recall that $f = \sum^N_1 f_jQ_j$. We denote
\[
 y^+ = \sum_{i\le j} Q_iyQ_j,\quad  y^- =\sum_{i>j} Q_iyQ_j.
\]
Note that $y^+$ (resp.\ $y^-$) is the upper (resp.\ lower) triangular part of $y$ (with respect to the decomposition $I=\sum Q_j$). We recall that,  whenever $1<q<\infty$, $y\mapsto y^+$ and $y\mapsto y^-$ are bounded linear maps on $L_q(\tau)$ with bounds independent of $N$, but this fails in case $q=1$ or $q=\infty$  (see \cite{Mat} and  \cite[\S 8]{PX} for references on this).

By the triangle inequality, since $y=y^++y^-$, to prove \eqref{eq10.8} it suffices to show $\forall y\in M$
\begin{equation}\label{eq10.9}
 \max\{\|f^{\frac{1-\theta}2}y^+\|_q, \|y^+f^{\frac{1-\theta}2}\|_q\} \le c_7\|f^{\frac12}y + yf^{\frac12}\|^{1-\theta}_1 \|y\|^\theta_2
\end{equation}
and similarly with $y^-$ in place of $y^+$. Let $L^-_p(\tau) =\{x\in L_p(\tau)\mid x^+=0\}$. Let $\Lambda_p = L_p(\tau)/L^-_p(\tau)$. Note that $x^++L^-_p(\tau) = x+L^-_p(\tau)$. We will denote abusively by $\|x^+\|_{\Lambda_p}$ the norm in $\Lambda_p$ of the equivalence class of $x^+$ modulo $L^-_p(\tau)$. Note that $\|x^+\|_{\Lambda_1} \le \|x\|_1$ for all $x$ in $L_1(\tau)$ and hence $\|f^{\frac12}y^+ + y^+ f^{\frac12}\|_{\Lambda_1} \le \|f^{\frac12}y+yf^{\frac12}\|_1$ for all $y$ in $L_2(\tau)$. Moreover we have $\|y^+\|_{\Lambda_2} = \|y^+\|_2$. Therefore to show \eqref{eq10.9} it suffices to show
\begin{equation}\label{eq10.10}
 \|f^{\frac{1-\theta}2}y^+\|_q \le c_7\|f^{\frac12}y^+ + y^+f^{\frac12}\|^{1-\theta}_{\Lambda_1} \|y^+\|^\theta_{\Lambda_2}
\end{equation}
and similarly for $y^+f^{\frac{1-\theta}2}$.

We now observe that, by Lemma \ref{lem1.6} (i), the maps
\[
T_1\colon \ x\mapsto \sum \frac{\lambda_i\wedge \lambda_j}{\lambda_i+\lambda_j} Q_ixQ_j\quad \text{and}\quad T_2\colon \ x\mapsto \sum \frac{\lambda_i\vee \lambda_j}{\lambda_i+\lambda_j} Q_ixx_j
\]
have norm $\le 3/2$ on $L_q(\tau)$ for all $1\le q\le \infty$, in particular on $L_1(\tau)$. Since these maps preserve $L^-_1(\tau)$, the ``same'' maps are contractive on $\Lambda_1$. Applying this with $\lambda_i = f^{1/2}_i$ and assuming as before that $f_1\le \ldots\le f_N$, we have $f_i\wedge f_j = f_i$ and $f_i\vee f_j = f_j$ for all $i\le j$ and hence $T_1(f^{\frac12}y^+ +y^+f^{\frac12})=f^{1/2}y^+$ and $T_2(f^{\frac12}y^+ + y^+f^{\frac12}) = y^+f^{\frac12}$. This gives us 
\[
 \max\{\|f^{\frac12}y^+\|_{\Lambda_1},\quad \|y^+f^{\frac12}\|_{\Lambda_1}\} \le (3/2)\|f^{\frac12}y^+ + y^+ f^{\frac12}\|_{\Lambda_1}.
\]
Thus to show \eqref{eq10.10} it suffices to show 
\[
 \|f^{\frac{1-\theta}2}y^+\|_q \le c_7\|f^{\frac12}y^+\|^{1-\theta}_{\Lambda_1} \|y^+\|^\theta_{\Lambda_2},
\]
and similarly for $y^+f^{\frac{1-\theta}2}$. Now by  \cite[Th.~4.5]{P3} and by duality we have $(\Lambda_1,\Lambda_2)_\theta\simeq \Lambda_q$ with equivalent norms (and equivalence constants independent of $N$). Using the analytic function $z\mapsto f^{\frac{z}2}$ and a by now routine application of the 3 line lemma (this is essentially the ``Stein interpolation principle'') this gives us (recall   $\|y^+\|_{\Lambda_2} = \|y^+\|_2$)
\[
\|f^{\frac{1-\theta}2}y^+\|_{\Lambda_q} \le c_8\|f^{\frac12}y^+\|^{1-\theta}_{\Lambda_1} \|y^+\|^\theta_2.
\]
But now since the ``triangular projection'' $y\mapsto y^+$ is bounded on $L_q(\tau)$ when $1<q<\infty$ (and since $(f^{\frac{1-\theta}2}y)^+ = f^{\frac{1-\theta}2}y^+$) we obtain finally
\[
 \|f^{\frac{1-\theta}2}y^+\|_q\le c_9\|f^{\frac12}y^+\|^{1-\theta}_{\Lambda_1} \|y^+\|^\theta_2.
\]
By the preceding successive reductions, this completes the proof of \eqref{eq10.6} and hence also of \eqref{eq1.6} for $p=1$.$\hfill\square$

\vspace{1cm}

\n\textbf{Acknowledgement.} I am  very grateful to
  Quanhua Xu  for   many stimulating suggestions and improvements. I also thank the referee
  for his/her very careful reading and the resulting corrections.

  \end{document}